\numberwithin{equation}{section} 
\newtheorem{theorem}{Theorem}[section]
\newtheorem{corollary}[theorem]{Corollary}
\newtheorem{lemma}[theorem]{Lemma}
\newtheorem{proposition}[theorem]{Proposition}
\theoremstyle{definition}
\newcommand{\R}{\mathbb{R}}	
\newcommand{\N}{\mathbb{N}} 
\newcommand{\dx}{\,\mathrm{d}x}	
\newcommand{\ds}{\,\mathrm{d}S}	
\newcommand{\e}{\varepsilon}	
\newcommand{\weak}{\rightharpoonup}
\newcommand{\nnu}{{\bm{\nu}}}  
\newcommand{\norm}[1]{\left\lVert #1 \right\lVert}
\newcommand{\abs}[1]{\left| #1 \right|}
\newcommand{\sub}{\subseteq}
\newcommand\restr[1]{\raisebox{-.5ex}{$|$}_{#1}}
\newcommand{\tu}[1]{\textup{#1}}
\DeclareMathOperator{\dist}{\mathrm{dist}}
\newenvironment{bvp}{\left\{\begin{aligned}  }{\end{aligned}\right.}
\title[On the splitting of Neumann Eigenvalues in Perforated
Domains]{On the splitting of Neumann Eigenvalues \\ in Perforated
	Domains}
\author{Veronica Felli, Lorenzo Liverani and Roberto Ognibene}
\address{Veronica Felli and Roberto Ognibene
	\newline \indent Dipartimento di Matematica e Applicazioni
	\newline \indent
	Universit\`a degli Studi di Milano–Bicocca
	\newline\indent Via Cozzi 55, 20125 Milano, Italy}
\email{veronica.felli@unimib.it, roberto.ognibene@unimib.it}
\address{Lorenzo Liverani
	\newline \indent Department of Mathematics
	\newline \indent FAU Erlangen-N{\"u}rnberg
	\newline\indent  Cauerstr. 11, 91058 Erlangen (Germany)}
\email{lorenzo.liverani@fau.de}
\keywords{Neumann eigenvalues; spectral stability; splitting; multiple eigenvalues.}
\subjclass[2020]{35J20; 35B25; 	35P15.}
\begin{document}

	\begin{abstract}
		We address the problem of splitting of eigenvalues of the Neumann
		Laplacian under singular domain perturbations. We consider a domain
		perturbed by the excision of a small spherical hole shrinking to an
		interior point. Our main result establishes that the splitting of
		multiple eigenvalues is a generic property: if the center of the
		hole is located outside a set of Hausdorff dimension $N-1$ and the
		radius is sufficiently small, multiple eigenvalues split into
		branches of lower multiplicity. The proof relies on the validity of
		an asymptotic expansion for the perturbed eigenvalues in terms of
		the scaling parameter. Such an asymptotic formula is of independent
		interest and generalizes previous results; notably, in dimension
		$N\geq 3$, it is valid for holes of arbitrary shape.
	\end{abstract}
	
	\maketitle
	

	\section{Introduction}
	
	\noindent
	The study of eigenvalues of differential operators plays a
	pivotal role in the theory of partial differential equations,
	with significant impacts on various applied disciplines, ranging from
	physics to chemistry and engineering. Indeed, eigenvalues and
	eigenfunctions are essential in the analysis of the stability,
	resonance, and dynamic behavior of complex systems.
	
	A vast literature is dedicated to the
	study of the spectral response to perturbations, including, for
	instance, small variations of the domain. The present paper fits
	into this context, with its primary aim being the further
	advancement of the investigation initiated in \cite{FLO},
	regarding spectral stability of the Neumann Laplacian in perforated
	domains.
	
	In the perturbative spectral theory for the Dirichlet Laplacian in
	domains with small holes, the relevant quantity in the asymptotic
	expansion of the eigenvalue variation turns out to be the capacity of
	the removed set, see the papers \cite{Courtois1995} and \cite{AFHL},
	as well as their extension to polyharmonic operators carried out in
	\cite{FR2023}.  If, instead, Neumann boundary conditions are imposed
	on the hole, a richer and more complex phenomenology can be
	observed. Following the research by Rauch and Taylor \cite{Rauch1975},
	which establishes conditions for the stability of the Neumann
	spectrum, several studies explored the asymptotic behaviour of
	perturbed eigenvalues, see e.g. \cite{ozawa1983}, \cite{LNS2011},
	\cite{LdC2012}, \cite{jimbo}, \cite{bucur_GAFA}. The recent paper
	\cite{FLO} analyzes the asymptotic behavior of simple eigenvalues of
	the Neumann Laplacian in domains perforated by small interior
	holes. In that work, the leading-order term in the expansion is
	expressed through a geometric quantity analogous to (boundary)
	torsional rigidity, which plays a role in the Neumann setting similar
	to that of capacity in the Dirichlet case.
	
	In the present paper, we address the issue of genericity of simplicity
	for the Neumann eigenvalues in a perforated domain. In more detail,
	under Neumann conditions on both the external and the hole's boundary,
	we are interested in understanding whether multiple eigenvalues split
	into simple ones (or, in any case, in eigenbranches of lower
	multiplicity) if the domain is singularly perturbed, and, if so, how
	this property depends on the point at which the hole is excised. To
	this end, we first analyze the eigenbranches as the hole shrinks to a
	point, deriving an asymptotic expansion for the variation of multiple
	eigenvalues. Then we investigate the circumstances under which the
	eigenbranches are separated from each other: in the case of double
	eigenvalues and spherical-shaped holes, we show that this occurs for
	almost every position of the hole. Incidentally, our techniques can be
	seamlessly applied to the case of Dirichlet boundary conditions,
	thereby recovering some results on the splitting of Dirichlet
	eigenvalues previously established by Flucher in \cite{flucher}.  This
	is discussed in the Appendix. We mention that the proof by Flucher is
	based on Courant’s min--max principle, together with the selection of
	appropriate perturbed eigenfunctions used to estimate the Rayleigh
	quotient and constructed through suitable harmonic correction methods.
	
	The persistence of multiplicity of eigenvalues with respect to
	perturbations is a classical and challenging problem. The first
	results in this direction date back to the 1970s, in particular to the
	contributions of A. M. Micheletti
	\cite{micheletti1,micheletti2,micheletti3,micheletti4}, K. Uhlenbeck
	\cite{Uhlenbeck1,Uhlenbeck2} and J. H. Albert \cite{albert1,albert2}.
	The results contained in these works support the conclusion that the
	simplicity of the entire spectrum of a differential operator is a
	\emph{generic property}. In general, this means that for an operator
	with multiple eigenvalues, \enquote{almost every} sufficiently small
	perturbation results in an operator with only simple eigenvalues. A
	common feature of all the works mentioned above (which treat various
	types of perturbations) is that they all deal with regular
	perturbations of the eigenvalue problem, in the sense that there
	exists a diffeomorphism mapping each perturbed problem into the
	unperturbed one.  Further refinements of these results are contained
	in \cite{teytel}. The proof of the generic simplicity of the spectrum
	has been extended to other contexts, among which we mention (without
	claiming to provide an exhaustive list): \cite{bleecker}, which treats
	perturbations of the metric in a Riemannian manifold, \cite{stokes}
	which deals with the Stokes operator, \cite{luzzini-zaccaron}
	concerning the Maxwell eigenvalues, and \cite{fall,fall2} in which the
	authors investigate splitting and persistence of multiplicities for
	eigenvalues of the fractional Laplacian. In the context of singular
	perturbations, however, the literature is still quite limited.  One
	notable result is due to \cite{flucher} (see also
	\cite{dabrwoski1,ALM2022}), which provides a formula for the
	asymptotic expansion of Dirichlet eigenvalues in domains with small
	holes. As a consequence, one can deduce the splitting of any multiple
	eigenvalue, that is its ramification into eigenbranches of lower
	multiplicity (see \Cref{appendix} for further details).  To the best
	of our knowledge, the present work is the first to address the
	splitting of Neumann eigenvalues under singular perturbations.

	\subsection{Main Results}
	We deal with the Neumann eigenvalue problem in
	a bounded, connected, and open Lipschitz set $\Omega\sub\R^N$, $N\geq2$, 
	\begin{equation}\label{eq:unperturbed}
		\begin{bvp}
			-\Delta \varphi +\varphi &= \lambda\varphi, &&\text{in }\Omega, \\
			\partial_{\nnu}\varphi &=0, &&\text{on }\partial\Omega,
		\end{bvp}
	\end{equation}
	where $\nnu$ denotes the outer unit normal vector to the boundary
	$\partial\Omega$. We perturb the above problem by excising a small hole
	from the interior of $\Omega$.
	
	Let $\Sigma\sub\R^N$ be an open, bounded, and Lipschitz set, such
	that
	\begin{equation}\label{eq:connect}
		\overline{\Sigma}\subset B_{r_0}\quad\text{and}\quad
		B_{r_0}\setminus  \overline{\Sigma}\text{ is connected},
	\end{equation}
	for some $r_0>0$, where $B_{r_0}=\{x\in\R^N:|x|<r_0\}$. We define
	\begin{equation}\label{eq:delSigma-eps}
		\Sigma_\e^{x_0}:=x_0+\e\Sigma,
	\end{equation}
	where $x_0\in \Omega$ and $\e>0$ is a small perturbation parameter.
	More precisely, letting
	\begin{equation}\label{eq:choose-eps-zero}
		\e_0=\e_0(x_0,\Omega,\Sigma) :=\frac{\mathop{\rm dist}(x_0,\partial\Omega)}{1+r_0}>0,
	\end{equation}
	we observe that $\dist(\Sigma_\e^{x_0},\partial\Omega)\geq\e_0$ for all 
	$0<\e<\e_0$, so that
	\begin{equation*}
		\overline{\Sigma_\e^{x_0}}\subset\Omega \quad\text{for all $0<\e<\e_0$}.
	\end{equation*}
	We point out that assumption \eqref{eq:connect}, which was
	implicit in \cite{FLO}, is needed
	to ensure a uniform extension property for Sobolev
	spaces on domains with scaling holes.
	We refer to \cite{SW99} and to the appendix of \cite{FSR} for a
	proof of such an extension result
	and for an example showing how it fails in the absence of condition \eqref{eq:connect}.
	
	Letting
	\begin{equation*}
		\Omega_\e^{x_0,\Sigma}:=\Omega\setminus \overline{\Sigma_\e^{x_0}},
	\end{equation*}
	we consider the following perturbed problem
	\begin{equation}\label{eq:perturbed}\tag{$\mathcal P_\e^{x_0,\Sigma}$}
		\begin{bvp}
			-\Delta \varphi +\varphi &= \lambda\varphi, &&\text{in }\Omega_\e^{x_0,\Sigma}, \\
			\partial_{\nnu}\varphi &=0, &&\text{on
			}\partial\Omega_\e^{x_0,\Sigma}.
		\end{bvp}
	\end{equation}
	We denote by $\{\lambda_k(\Omega)\}_{k\in\N}$ the eigevalues of the
	unperturbed problem \eqref{eq:unperturbed} (repeated according to
	their multiplicities), by $\{\varphi_k\}_{k\in\N}$ a corresponding
	sequence of $L^2(\Omega)$-orthonormal eigenfunctions, and by
	$E(\lambda_k(\Omega))\sub H^1(\Omega)$ the eigenspace associated to
	the eigenvalue $\lambda_k(\Omega)$.  We note that
	$\lambda_0(\Omega) = 1$ for any domain $\Omega$, so it remains
	constant under domain perturbations and is therefore irrelevant for
	our analysis. Furthermore $\lambda_0(\Omega)=1$ is simple, so that
	$\lambda_k(\Omega)>1$ for every $k\geq1$.

	Analogously, we denote by
	$\{\lambda_k(\Omega_\e^{x_0,\Sigma})\}_{k\in\N}$ the perturbed
	eigenvalues, i.e. the eigenvalues of \eqref{eq:perturbed}, and by
	$E(\lambda_k(\Omega_\e^{x_0,\Sigma}))\sub H^1(\Omega_\e^{x_0,\Sigma})$
	the eigenspace corresponding to $\lambda_k(\Omega_\e^{x_0,\Sigma})$.
	For every $k\geq1$, we also denote as
	\begin{equation*}
		\mathfrak{m}_{k}^{x_0,\Sigma} (\e)=\dim \big(E(\lambda_k(\Omega_\e^{x_0,\Sigma}))\big)
	\end{equation*}
	the multiplicity of $\lambda_{k}(\Omega_\e^{x_0,\Sigma})$
	as an eigenvalue of \eqref{eq:perturbed}.

	As shown in the foundational work \cite{Rauch1975}, the spectrum
	of Neumann Laplacian is stable with respect to the domain
	perturbation introduced above, in the sense that
	\begin{equation}\label{eq:conveig}
		\lambda_k(\Omega_\e^{x_0,\Sigma})\to\lambda_k(\Omega)\quad
		\text{as }\e\to0,~\text{for all }k\in\N.
	\end{equation}
	We observe that, for every $k\in \N$,
	$\lambda_k(\Omega)
	=\lambda_k(\Omega\setminus\{x_0\})=\lambda_k(\Omega_0^{x_0,\Sigma})$
	due to the fact that the singleton $\{x_0\}$ has null capacity. 
	
	The rate of convergence in \eqref{eq:conveig} has been the
	subject of extensive quantitative analysis, including the recent
	work \cite{FLO}. In contrast, the \emph{qualitative} behavior of the
	spectrum under perturbations remains less explored. In particular,
	when $\lambda_k(\Omega)$ is a multiple eigenvalue, it is natural to
	ask whether its multiplicity is preserved under perturbation. Our
	main result addresses precisely this question.
	
	Throughout the paper we consider a fixed eigenvalue
	$\lambda_n(\Omega)$ of \eqref{eq:unperturbed}. Let
	\begin{equation*}
		m:=\dim E(\lambda_n(\Omega))
	\end{equation*}
	denote its
	multiplicity; hence, the index $n$ can be chosen in such a way that
	\begin{equation}\label{eq:scelta-n}
		\lambda_{n-1}(\Omega)<\lambda_n(\Omega)=\dots
		=\lambda_{n+m-1}(\Omega)<\lambda_{n+m}(\Omega).
	\end{equation}
	Since $\lambda_0(\Omega)=1$ is simple, it follows that
	\begin{equation}\label{eq:lambda_n>1}
		\text{if $m=\dim E(\lambda_n(\Omega))>1$,
			then $\lambda_n(\Omega)>1$}.
	\end{equation}
	In the case of a spherical hole $\Sigma=B_1$, our main result shows that the
	perturbed eigenvalues emanating from the multiple eigenvalue
	$\lambda_n(\Omega)$ all have multiplicity strictly less than
	$m=~\!\dim E(\lambda_n(\Omega))$.
	\begin{theorem}\label{thm:3} Let
		$N\geq 2$ and $m>1$.  There exists a relatively closed set
		$\Gamma\sub\Omega$ such that $\dim_{\mathcal{H}}\Gamma\leq N-1$
		and the following holds (with $\dim_{\mathcal{H}}\Gamma$
		denoting the Hausdorff dimension of $\Gamma$): for every
		$x_0\in\Omega\setminus\Gamma$ there exists
		$\e_{\textup{s}}=\e_{\textup{s}}(x_0)>0$ such that
		\begin{equation*}
			\mathfrak{m}_{n+i-1}^{x_0,B_1} (\e)<m\quad\text{for all
				$\e\in(0,\e_{\textup{s}})$ and $i=1,\dots,m$}.
		\end{equation*}
	\end{theorem}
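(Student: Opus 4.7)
The strategy is to reduce the splitting question to the spectral analysis of a finite symmetric matrix depending analytically on $x_0$, and then to bound the size of the set where this matrix fails to have distinct eigenvalues. Invoking the asymptotic expansion proved earlier in the paper, specialized to the ball $\Sigma = B_1$, one writes the $m$ perturbed branches in the form
\[
\lambda_{n+i-1}(\Omega_\e^{x_0,B_1}) = \lambda_n(\Omega) + \e^N \mu_i(x_0) + o(\e^N), \qquad i=1,\dots,m,
\]
where $\mu_1(x_0) \le \dots \le \mu_m(x_0)$ are the ordered eigenvalues of an explicit symmetric $m \times m$ matrix $M(x_0)$ with entries
\[
M_{ij}(x_0) = \beta\,\varphi_i(x_0)\varphi_j(x_0) - \alpha\,\nabla\varphi_i(x_0)\cdot\nabla\varphi_j(x_0),
\]
in any $L^2(\Omega)$-orthonormal basis $\{\varphi_i\}_{i=1}^m$ of $E(\lambda_n(\Omega))$ and for positive constants $\alpha,\beta$ depending only on $N$, $\lambda_n(\Omega)$, and $B_1$. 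Whenever $M(x_0)$ has at least two distinct eigenvalues, the remainder estimate forces the branches near $\lambda_n(\Omega)$ to split into at least two separated groups, each of multiplicity strictly less than $m$. Hence the proof reduces to controlling the exceptional set $\Gamma := \{x_0 \in \Omega : M(x_0) \in \R \cdot I_m\}$.

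Interior elliptic regularity makes the $\varphi_i$ real analytic on $\Omega$, so the entries $M_{ij}$ are real analytic, and the condition $M(x_0) \in \R \cdot I_m$ is encoded by the vanishing of the nonnegative real analytic function $f(x_0) := m\,\mathrm{tr}(M(x_0)^2) - (\mathrm{tr}\,M(x_0))^2$. Thus $\Gamma = f^{-1}(\{0\})$ is relatively closed in $\Omega$, and the classical dichotomy for zero sets of real analytic functions on a connected open set gives either $f \equiv 0$ or $\dim_{\H} \Gamma \le N-1$. The whole argument therefore comes down to excluding the first alternative.

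To rule out $f \equiv 0$ I would proceed by contradiction. Suppose $M(x_0) = c(x_0)\, I_m$ for every $x_0 \in \Omega$; polarization gives the bilinear identity
\[
\beta\,u(x_0)v(x_0) - \alpha\,\nabla u(x_0) \cdot \nabla v(x_0) = c(x_0)\langle u, v \rangle_{L^2(\Omega)}, \qquad \forall u,v \in E(\lambda_n(\Omega)),
\]
at every $x_0$. Combining this with $\Delta(uv) = 2\nabla u \cdot \nabla v - 2(\lambda_n - 1)uv$ and the Neumann condition inherited by $uv$, one finds that the product $uv$ solves the Neumann problem
\[
(-\Delta + I)(uv) - \mu\,(uv) = \tfrac{2c}{\alpha}\langle u, v\rangle_{L^2(\Omega)} \quad\text{in }\Omega,\qquad \partial_\nnu(uv)=0 \text{ on }\partial\Omega,
\]
with eigenvalue $\mu := 2\lambda_n(\Omega) - 1 - 2\beta/\alpha$ depending only on $N$ and $\lambda_n(\Omega)$. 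For $u, v$ orthogonal in $L^2$, the right-hand side vanishes, so $uv$ is a genuine Neumann eigenfunction of eigenvalue $\mu$; since $u, v$ are real analytic and nonzero, $uv \not\equiv 0$ by connectedness of $\Omega$, forcing the whole family $\{\varphi_i\varphi_j\}_{i<j}$ to lie in $E(\mu)$. Testing these identities against further members of $E(\lambda_n(\Omega))$ and exploiting the orthogonality of distinct eigenspaces then yields an overdetermined linear system incompatible with $m>1$.

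The hard part is precisely this final step: a priori $\mu$ may belong to the Neumann spectrum of $\Omega$, so the contradiction cannot be obtained from a naive spectral-gap argument and one must exploit the rigidity arising from the bilinear identity across \emph{all} pairs in $E(\lambda_n(\Omega))$. Once $f \not\equiv 0$ is established, $\Gamma$ is a relatively closed subset of $\Omega$ with Hausdorff dimension at most $N-1$; for every $x_0 \in \Omega \setminus \Gamma$ the matrix $M(x_0)$ has at least two distinct eigenvalues, and the existence of $\e_{\mathrm{s}}(x_0)>0$ with the claimed splitting property follows from the asymptotic expansion together with the standard Lipschitz dependence of the eigenvalue branches.
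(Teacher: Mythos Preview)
Your framework coincides with the paper's: the asymptotic expansion reduces the problem to the set $\Gamma$ where the symmetric matrix $M(x_0)$ (equivalently, the bilinear form $\mathcal B_{x_0}$) is a scalar multiple of the identity, and analyticity of the eigenfunctions reduces everything to excluding $\Gamma=\Omega$. Your encoding of $\Gamma$ via the nonnegative analytic scalar $f=m\,\mathrm{tr}(M^2)-(\mathrm{tr}\,M)^2$ is a clean variant of the paper's reduction (they use only the necessary condition that the diagonal entries agree, which suffices since $\Gamma\subset\{g_1=g_2\}$). You also correctly observe that, under the bad hypothesis, products $uv$ of orthogonal eigenfunctions satisfy a Neumann eigenvalue equation with shifted eigenvalue $\mu=\tfrac{2}{N}(\lambda_n-1)+1$, which is exactly the paper's $\Lambda_n$.

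The genuine gap is precisely the one you flag: you do not explain how ``testing against further members of $E(\lambda_n)$ and exploiting orthogonality of distinct eigenspaces'' produces a contradiction, and since $\mu$ may lie in the Neumann spectrum there is no a priori obstruction to $\varphi_i\varphi_j\in E(\mu)$. For $m=2$ your construction only yields $\varphi_1\varphi_2$ and $\varphi_1^2-\varphi_2^2$ in $E(\mu)$, with no bound on $\dim E(\mu)$, so no overdetermination is visible. The paper closes this step by a different, geometric argument. Writing $u=\varphi_n+\varphi_{n+1}$ and $v=\varphi_n-\varphi_{n+1}$, one restricts to a nodal domain $\omega$ of $u$ and works in the space $\mathcal H_\omega$ of $H^1(\omega)$-functions whose trivial extension to $\Omega$ lies in $H^1(\Omega)$. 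Testing the equation for $u$ against the positive first eigenfunction of the mixed problem on $\omega$ gives $\Lambda_1(\omega)=\lambda_n$, while testing the equation for $uv$ against its own trivial extension gives $\mathcal R_\omega(uv|_\omega)=\Lambda_n$; hence $\lambda_n=\Lambda_1(\omega)\le\Lambda_n=\tfrac{2}{N}(\lambda_n-1)+1$. For $N\ge3$ this already contradicts $\lambda_n>1$. For $N=2$ one gets equality, so $uv|_\omega$ is a first eigenfunction on $\omega$ and hence a nonzero multiple of $u|_\omega$; this forces $v$ to be constant on $\omega$, contradicting $-\Delta v+v=\lambda_n v$ with $\lambda_n\neq1$. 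This nodal-domain Rayleigh-quotient comparison is the missing ingredient in your proposal.
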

	As a direct byproduct of Theorem \ref{thm:3} we obtain the following
	genericity result for the splitting of double eigenvalues.
	
	\begin{corollary}\label{cor:thm3} Let $N\geq 2$ and $m=2$. 
		There exists a relatively closed set $\Gamma\sub\Omega$ such
		that $\dim_{\mathcal{H}}\Gamma\leq N-1$ and the
		following holds: for every $x_0\in\Omega\setminus\Gamma$ there
		exists $\e_{\textup{s}}=\e_{\textup{s}}(x_0)>0$ such that
		\begin{equation*}
			\lambda_n(\Omega_\e^{x_0,B_1})<\lambda_{n+1}(\Omega_\e^{x_0,B_1})
		\end{equation*}
		for all $0<\e\leq \e_{\textup{s}}$. In particular, for every
		$x_0\in\Omega\setminus\Gamma$, $\lambda_n(\Omega_\e^{x_0,B_1})$ and
		$\lambda_{n+1}(\Omega_\e^{x_0,B_1})$ are simple, provided $\e$ is
		sufficiently small.
	\end{corollary}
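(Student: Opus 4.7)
The plan is to derive the corollary as a direct specialization of \Cref{thm:3} to the case $m=2$, combined with the Rauch--Taylor stability \eqref{eq:conveig}.

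First, I would apply \Cref{thm:3} with $m=2$: this produces a relatively closed set $\Gamma\sub\Omega$ with $\dim_{\mathcal H}\Gamma\leq N-1$ and, for every $x_0\in\Omega\setminus\Gamma$, a threshold $\e_{\textup{s}}(x_0)>0$ such that
\begin{equation*}
	\mathfrak{m}_{n}^{x_0,B_1}(\e)<2\quad\text{and}\quad\mathfrak{m}_{n+1}^{x_0,B_1}(\e)<2
\end{equation*}
for every $\e\in(0,\e_{\textup{s}})$. Since multiplicities are positive integers, both quantities are equal to $1$.

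Next, I would derive the strict inequality $\lambda_n(\Omega_\e^{x_0,B_1})<\lambda_{n+1}(\Omega_\e^{x_0,B_1})$ by contradiction. By the ordering convention, $\lambda_n(\Omega_\e^{x_0,B_1})\leq\lambda_{n+1}(\Omega_\e^{x_0,B_1})$; if equality held, this common value would be listed at least twice in the sequence with multiplicity, forcing $\mathfrak{m}_{n}^{x_0,B_1}(\e)\geq 2$, contradicting the bound just obtained. To promote the multiplicity-one statement to genuine simplicity in the perturbed spectrum, I would invoke \eqref{eq:conveig}: in view of the strict separations $\lambda_{n-1}(\Omega)<\lambda_n(\Omega)$ and $\lambda_{n+1}(\Omega)<\lambda_{n+2}(\Omega)$ from \eqref{eq:scelta-n}, possibly shrinking $\e_{\textup{s}}(x_0)$ ensures that $\lambda_{n-1}(\Omega_\e^{x_0,B_1})<\lambda_n(\Omega_\e^{x_0,B_1})$ and $\lambda_{n+1}(\Omega_\e^{x_0,B_1})<\lambda_{n+2}(\Omega_\e^{x_0,B_1})$, so that both $\lambda_n$ and $\lambda_{n+1}$ are isolated eigenvalues in the perturbed spectrum with geometric multiplicity one, i.e.\ simple.

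There is no substantive obstacle to overcome in this corollary: it is a bookkeeping consequence of \Cref{thm:3}, exploiting only that the condition $\mathfrak{m}<m=2$ leaves no room other than $\mathfrak{m}=1$. All the analytic weight is carried by \Cref{thm:3} itself, whose proof rests on the asymptotic expansion of the eigenvalue branches and the characterization of the exceptional set $\Gamma$.
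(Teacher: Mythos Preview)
Your proposal is correct and follows exactly the approach the paper intends: the corollary is stated as a ``direct byproduct'' of \Cref{thm:3} with no separate proof given, and your argument simply spells out that for $m=2$ the conclusion $\mathfrak{m}<2$ forces $\mathfrak{m}=1$, together with the trivial observation that eigenvalues listed with multiplicity can only coincide if their multiplicity is at least $2$. Your additional remark invoking \eqref{eq:conveig} to separate the perturbed $\lambda_n,\lambda_{n+1}$ from their neighbors is a welcome clarification that the paper leaves implicit.
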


	
	\subsection{Strategy of Proof}
	
	To prove Theorem \ref{thm:3}, the starting point is the general
	asymptotic expansion for Neumann eigenvalues in singularly perturbed
	domains, established in Theorem \ref{thm:1}. This is novel in the
	context of the Neumann Laplacian. In fact, all the auxiliary results
	employed in the proof also appear to be new and complement well the
	analysis brought forth in \cite{FLO}. We mention that asymptotic
	formulas for the eigenvalue variation are results of independent
	interest and may find applications, for instance, in non-existence of
	optimal shapes (see e.g. \cite{bucur_GAFA}).
	
	In the case $N\geq3$, we recall from \cite{FLO} the following notion
	of torsional rigidity: for any $f\in L^2(\partial\Sigma)$
	\begin{equation*}
		\tau_{\R^N\setminus\Sigma}(\partial\Sigma,f):=
		-2\min\left\{\frac{1}{2}\int_{\R^N\setminus
			\Sigma}
		|\nabla u|^2\dx-\int_{\partial\Sigma}fu\ds\colon u
		\in \mathcal{D}^{1,2}(\R^N\setminus\Sigma)\right\},
	\end{equation*}
	where the space $\mathcal{D}^{1,2}(\R^N\setminus \Sigma)$ is defined
	as the completion of $C_c^\infty(\R^N\setminus \Sigma)$ with respect
	to the norm
	\begin{equation*}
		\norm{u}_{\mathcal{D}^{1,2}(\R^N\setminus \Sigma)}
		:=\bigg(\int_{\R^N\setminus \Sigma} \abs{\nabla u}^2\dx \bigg)^{\!\frac{1}{2}}.
	\end{equation*}
	We also denote by
	$U_{\Sigma,f}\in\mathcal{D}^{1,2}(\R^N\setminus\Sigma)$ the unique
	minimizer attaining
	$\tau_{\R^N\setminus\Sigma}(\partial\Sigma,f)$. Equivalently,
	$U_{\Sigma,f}$ is the unique weak solution to
	\begin{equation*}
		\begin{dcases}
			-\Delta U = 0, &\text{in } \R^N\setminus \overline{\Sigma},\\
			\partial_\nu U = f, &\text{on } \partial\Sigma,
		\end{dcases}
	\end{equation*}
	so that
	\begin{equation*}
		\tau_{\R^N\setminus\Sigma}(\partial\Sigma,f):=-\int_{\R^N\setminus
			\Sigma}
		|\nabla U_{\Sigma,f}|^2\dx+2\int_{\partial\Sigma}f U_{\Sigma,f}\ds
		=\int_{\R^N\setminus
			\Sigma}
		|\nabla U_{\Sigma,f}|^2\dx.
	\end{equation*}
	The following theorem describes the asymptotic behavior of
	eigenbranches departing from the fixed eigenvalue
	$\lambda_n(\Omega)$ of \eqref{eq:unperturbed}, as the the domain is
	perturbed by the removal of the small hole $\Sigma_\e^{x_0}$
	shrinking to $x_0$ as $\e\to 0$.
	
	\begin{theorem}\label{thm:1}
		Let $N\geq 3$ and $x_0\in\Omega$.  Let
		$\{\ell_i^{x_0,\Sigma}\}_{i=1,\dots,m}$ be the eigenvalues (in
		descending order) of the
		bilinear form
		\begin{multline}\label{eq:limit-bi-form}
			\mathcal{L}_{x_0,\Sigma}(\varphi,\psi):=\int_{\R^N\setminus\Sigma}\nabla
			U_{\Sigma,\nabla\varphi(x_0)\cdot\nnu}\cdot\nabla
			U_{\Sigma,\nabla\psi(x_0)\cdot\nnu}\dx
			\\+|\Sigma|\left(\nabla\varphi(x_0)\cdot\nabla\psi(x_0)-
			(\lambda_n(\Omega)-1)\varphi(x_0)\psi(x_0)\right),
		\end{multline}
		defined for every $\varphi,\psi\in E(\lambda_n(\Omega))$. Then, for
		every $i=1,\dots,m$ we have
		\begin{equation*}
			\lambda_{n+i-1}\left(\Omega_\e^{x_0,\Sigma}\right)=
			\lambda_n(\Omega)-\ell_i^{x_0,\Sigma}\e^N+o(\e^N)\quad\text{as }\e\to 0.
		\end{equation*}
	\end{theorem}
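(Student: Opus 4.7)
The plan is to combine a finite-dimensional reduction of the spectral problem to $E(\lambda_n(\Omega))$ with a blow-up analysis of the perturbed eigenfunctions near the shrinking hole. Since by \eqref{eq:conveig} each of the $m$ branches $\lambda_{n+i-1}(\Omega_\e^{x_0,\Sigma})$ converges to $\lambda_n(\Omega)$, the aim is to extract the $\e^N$-correction and show that $\e^{-N}(\lambda_n(\Omega)-\lambda_{n+i-1}(\Omega_\e^{x_0,\Sigma}))$ converges to $\ell_i^{x_0,\Sigma}$, the $i$-th eigenvalue in descending order of the bilinear form $\mathcal{L}_{x_0,\Sigma}$ on $E(\lambda_n(\Omega))$.

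For an $L^2(\Omega_\e^{x_0,\Sigma})$-normalized eigenfunction $u_\e$ associated with $\lambda_\e:=\lambda_{n+i-1}(\Omega_\e^{x_0,\Sigma})$ and any $\varphi\in E(\lambda_n(\Omega))$, integration by parts on $\Omega_\e^{x_0,\Sigma}$, combined with the homogeneous Neumann conditions for $u_\e$ on all of $\partial\Omega_\e^{x_0,\Sigma}$ and for $\varphi$ on $\partial\Omega$, produces the fundamental identity
\[
(\lambda_\e-\lambda_n(\Omega))\int_{\Omega_\e^{x_0,\Sigma}} u_\e\varphi\,dx=\int_{\partial\Sigma_\e^{x_0}} u_\e\,\partial_{\nnu}\varphi \,dS.
\]
The right-hand side will then be analyzed via the blow-up $v_\e(y):=u_\e(x_0+\e y)$, which satisfies $-\Delta v_\e=\e^2(\lambda_\e-1)v_\e$ in $\e^{-1}(\Omega-x_0)\setminus\Sigma$ with $\partial_{\nnu} v_\e=0$ on $\partial\Sigma$. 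Up to a subsequence, $u_\e\weak u$ in $H^1(\Omega)$ for some $u\in E(\lambda_n(\Omega))$, and the natural Taylor-type expansion reads
\[
v_\e(y)=u(x_0)+\e\bigl(\nabla u(x_0)\cdot y-U_{\Sigma,\nabla u(x_0)\cdot\nnu}(y)\bigr)+o(\e),
\]
where the parenthesized profile combines the first-order Taylor expansion of $u$ at $x_0$ with the capacitary-type corrector enforcing $\partial_{\nnu} v_\e=0$ on $\partial\Sigma$. The assumption $N\geq 3$ is essential here for the well-posedness in $\mathcal{D}^{1,2}(\R^N\setminus\Sigma)$ of the Neumann problem defining $U_{\Sigma,f}$, together with its required decay at infinity.

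Substituting this expansion together with the Taylor expansion $\partial_\nnu\varphi(x_0+\e y)=\nabla\varphi(x_0)\cdot\nnu+\e\,D^2\varphi(x_0)y\cdot\nnu+O(\e^2)$ in the boundary integral and rescaling, the leading $\e^{N-1}$-term vanishes thanks to $\int_{\partial\Sigma}\nnu\,dS=0$, while the $\e^N$-contribution is obtained by combining (a) the divergence theorem, applied to the boundary integrals over $\Sigma$ involving the linear and Hessian terms of the Taylor expansion of $\varphi$; (b) the pointwise identity $\Delta\varphi(x_0)=(1-\lambda_n(\Omega))\varphi(x_0)$ provided by the unperturbed equation; and (c) the weak formulation of the Neumann problem defining $U_{\Sigma,\cdot}$, which converts the mixed boundary integral $\int_{\partial\Sigma}(\nabla\varphi(x_0)\cdot\nnu)\,U_{\Sigma,\nabla u(x_0)\cdot\nnu}\,dS$ into the Dirichlet pairing $\int_{\R^N\setminus\Sigma}\nabla U_{\Sigma,\nabla\varphi(x_0)\cdot\nnu}\cdot\nabla U_{\Sigma,\nabla u(x_0)\cdot\nnu}\,dx$. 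Assembling these contributions yields $\int_{\partial\Sigma_\e^{x_0}} u_\e\,\partial_{\nnu}\varphi\,dS=-\e^N\mathcal{L}_{x_0,\Sigma}(\varphi,u)+o(\e^N)$. Fixing an $L^2$-orthonormal basis $(\varphi_j)_{j=1,\ldots,m}$ of $E(\lambda_n(\Omega))$ and writing $u=\sum_k c_k\varphi_k$, dividing the fundamental identity by $\e^N$ identifies $\lim_{\e\to 0}\e^{-N}(\lambda_n(\Omega)-\lambda_\e)$ as an eigenvalue of the symmetric matrix $\bigl(\mathcal{L}_{x_0,\Sigma}(\varphi_j,\varphi_k)\bigr)_{jk}$, with $(c_k)$ as an associated eigenvector; running the argument simultaneously over the $m$ eigenbranches and exploiting the $L^2$-orthogonality of the limit eigenfunctions distributes these limits as precisely the $\ell_i^{x_0,\Sigma}$ for $i=1,\ldots,m$ in descending order.

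The hard part will be the rigorous justification of the blow-up expansion above, namely the convergence of the rescaled remainder $\e^{-1}(v_\e-u(x_0))$ to the limit profile $\nabla u(x_0)\cdot y-U_{\Sigma,\nabla u(x_0)\cdot\nnu}$ in a topology strong enough to pass to the limit in the boundary integral. This calls for uniform $H^1$ estimates on $u_\e$ in $O(\e)$-neighborhoods of $x_0$, compactness of the blow-up profiles in $\mathcal{D}^{1,2}(\R^N\setminus\Sigma)$, and identification of the limit as the unique solution of the Neumann problem in $\R^N\setminus\Sigma$ with the prescribed datum. These ingredients substantially refine the simple-eigenvalue analysis of \cite{FLO}; in the multiple setting they must additionally be run simultaneously for $m$ orthonormal eigenfunctions, whose limits then provide the basis of $E(\lambda_n(\Omega))$ diagonalizing $\mathcal{L}_{x_0,\Sigma}$.
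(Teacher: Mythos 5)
Your strategy --- testing the perturbed and unperturbed eigenfunctions against each other to get the ``fundamental identity'' $(\lambda_\e-\lambda_n)\int_{\Omega_\e}u_\e\varphi\,\dx = \pm\int_{\partial\Sigma_\e}u_\e\,\partial_\nnu\varphi\,\ds$, then blowing up the perturbed eigenfunction $u_\e$ at scale $\e$ near $x_0$ and substituting into the boundary integral --- is a genuinely different route from the paper's. The paper reduces to a finite-dimensional problem via the Colin de Verdi\`ere \emph{Lemma on small eigenvalues} applied to the corrected subspace $F_\e=\{\varphi-U_\e^\varphi:\varphi\in E(\lambda_n)\}$ (Proposition \ref{prop:CdV}), then identifies the restricted form with $r_\e(\varphi,\psi)=q_\e(P_\e\varphi,P_\e\psi)$ (Lemma \ref{lemma:xi_mu}), and finally blows up the \emph{torsion correctors} $U_\e^\varphi$ rather than the eigenfunctions (Lemma \ref{lemma:r_e_blowup}, via Lemma \ref{lemma:blowup}). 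The payoff of the paper's scheme is precisely that it never needs a fine blow-up expansion of $u_\e$: the correctors $U_\e^\varphi$ solve a decoupled linear Neumann problem with explicit, $\e$-localized data $\partial_\nnu\varphi$, so their rescaled limit is easy to control and already available in \cite{FLO}, whereas the behavior of $u_\e$ near $x_0$ couples back to the full spectral problem. Moreover, the abstract lemma automatically matches the $m$ eigenbranches with the $m$ eigenvalues of the reduced form, in order and uniformly over $E(\lambda_n)$, whereas in your scheme this ordering must be recovered by hand from subsequential limits and the $L^2$-orthogonality of the $m$ perturbed eigenfunctions.

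The genuine gap is the step you flag yourself: the second-order blow-up
\[
v_\e(y)=u(x_0)+\e\bigl(\nabla u(x_0)\cdot y-U_{\Sigma,\nabla u(x_0)\cdot\nnu}(y)\bigr)+o(\e),
\]
with $o(\e)$ small in a norm strong enough (say in $H^{1/2}(\partial\Sigma)$ or $C^1$ near $\partial\Sigma$) to pass to the limit in $\e^{-N}\int_{\partial\Sigma_\e}u_\e\,\partial_\nnu\varphi\,\ds$, and uniformly over the $m$ orthonormal eigenfunctions. This result is not in \cite{FLO} (what is there, and what the paper uses, is the blow-up of the torsion functions), and it is not a routine consequence of $u_\e\weak u$ in $H^1(\Omega)$: it requires uniform interior elliptic estimates at scale $\e$ for a perforated Neumann problem, compactness in $\mathcal D^{1,2}(\R^N\setminus\Sigma)$, and identification of the limit profile. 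Until this expansion is established (and its remainder shown to be $o(\e^N)$ after rescaling), the reconstruction of $\mathcal{L}_{x_0,\Sigma}$ from the boundary integral cannot be completed, so the argument as written is a plausible plan rather than a proof.
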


	The proof of Theorem \ref{thm:1} is based of the \emph{Lemma on small
		eigenvalues}, due to Colin de Verdière \cite{ColindeV1986} and
	revisited in \cite{Courtois1995} and \cite{ALM2022}, combined with a
	blow-up analysis for the torsion function \eqref{eq:torfun} in the
	spirit of \cite{FLO}.  In fact, building upon the same results,
	it is also possible to obtain an asymptotic expansion of the energy
	difference between any eigenfunction associated with
	$\lambda_n(\Omega)$ and its projection onto the direct sum of the
	corresponding perturbed eigenspaces.

	\begin{theorem}\label{thm:eigenfunctions}
		Let $N\geq 3$ and $x_0\in\Omega$. For every $\e\in(0,\e_0)$, let
		\begin{equation*}
			\mathcal{E}_\e^n:=\bigoplus_{i=1}^m E(\lambda_{n+i-1}(\Omega_\e^{x_0,\Sigma}))
		\end{equation*}
		and 
		\begin{equation*}
			\Pi_\e^n\colon L^2(\Omega_\e^{x_0,\Sigma})\to
			\mathcal{E}_\e^n\sub L^2(\Omega_\e^{x_0,\Sigma})
		\end{equation*}
		be the orthogonal projection. Then
		\begin{equation*}
			\norm{\varphi-\frac{\Pi_\e^n\varphi}
				{\norm{\Pi_\e^n\varphi}_{L^2(\Omega_\e^{x_0,\Sigma})}}}_{H^1(\Omega_\e^{x_0,\Sigma})}^2=
			\e^N\norm{\nabla
				U_{\Sigma,\nabla\varphi(x_0)\cdot\nu}}_{L^2(\R^N\setminus
				\Sigma)}^2+
			\mathcal R(\varphi,\e)
		\end{equation*}
		for all $\varphi\in E(\lambda_n(\Omega))$ such that
		$\norm{\varphi}_{L^2(\Omega)}=1$, where the reminder term
		$\mathcal R(\varphi,\e)$ is $o(\e^N)$ as $\e\to 0$ uniformly with
		respect to $\varphi$, i.e.
		\begin{equation*}
			\lim_{\e\to0}\sup_{\substack{\varphi\in E(\lambda_n(\Omega))\\
					\|\varphi\|_{L^2(\Omega)}=1}}\frac{|\mathcal R(\varphi,\e)|}{\e^N}=0.
		\end{equation*}
	\end{theorem}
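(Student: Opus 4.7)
The plan is to bootstrap the Colin de Verdière argument that yields Theorem \ref{thm:1}. The Lemma on small eigenvalues, applied to a basis of well-chosen quasi-modes, delivers not only the eigenvalue expansion but also a description of the perturbed eigenspace $\mathcal{E}_\e^n$ as an $o(\e^{N/2})$-perturbation, in the $H^1$-topology, of the span of those quasi-modes. The target estimate then follows by a short algebraic expansion.

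In detail, I would first fix an $L^2(\Omega)$-orthonormal basis $\{\varphi^{(i)}\}_{i=1}^m$ of $E(\lambda_n(\Omega))$ diagonalizing the bilinear form $\mathcal{L}_{x_0,\Sigma}$, and, for each $i$, construct a corrector $V_\e^{(i)}$ on $\Omega_\e^{x_0,\Sigma}$ from a cut-off of the rescaling $x\mapsto \e\,U_{\Sigma,\nabla\varphi^{(i)}(x_0)\cdot\nu}((x-x_0)/\e)$. This is the same device used in the proof of Theorem \ref{thm:1}: by construction the quasi-mode $\tilde\varphi^{(i)}_\e:=\varphi^{(i)}-V_\e^{(i)}$ matches the Neumann condition on $\partial\Sigma_\e^{x_0}$ up to a negligible order, and a change of variables combined with the blow-up analysis behind Theorem \ref{thm:1} gives
\[
\|\nabla V_\e^{(i)}\|_{L^2(\Omega_\e^{x_0,\Sigma})}^2=\e^N\|\nabla U_{\Sigma,\nabla\varphi^{(i)}(x_0)\cdot\nu}\|_{L^2(\R^N\setminus\Sigma)}^2+o(\e^N),\qquad \|V_\e^{(i)}\|_{L^2(\Omega_\e^{x_0,\Sigma})}^2=o(\e^N),
\]
in every dimension $N\geq 3$. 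Extending by linearity, for $\varphi=\sum_i c_i\varphi^{(i)}$ I set $V_\e^\varphi:=\sum_i c_i V_\e^{(i)}$; linearity of $f\mapsto U_{\Sigma,f}$ transfers these estimates to $V_\e^\varphi$, with boundary datum $\nabla\varphi(x_0)\cdot\nu$. Applying Colin de Verdière's lemma to $\{\tilde\varphi^{(i)}_\e\}_{i=1}^m$ yields a near-isometry from $\mathcal{E}_\e^n$ onto $\Span\{\tilde\varphi^{(i)}_\e\}$ and, in particular,
\[
\frac{\Pi_\e^n\varphi}{\|\Pi_\e^n\varphi\|_{L^2(\Omega_\e^{x_0,\Sigma})}}=\varphi-V_\e^\varphi+\rho_\e(\varphi),\qquad \|\rho_\e(\varphi)\|_{H^1(\Omega_\e^{x_0,\Sigma})}=o(\e^{N/2}).
\]
Expanding the squared $H^1$-norm $\|V_\e^\varphi-\rho_\e(\varphi)\|_{H^1}^2$ and bounding the cross term by Cauchy--Schwarz, using $\|V_\e^\varphi\|_{H^1}=O(\e^{N/2})$ and the above $o(\e^{N/2})$ control of $\rho_\e(\varphi)$, produces the announced expansion.

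Uniformity in $\varphi$ will follow from the finite-dimensionality of $E(\lambda_n(\Omega))$: the unit $L^2(\Omega)$-sphere is compact, and every ingredient---blow-up convergence, quasi-mode defect, Colin de Verdière near-isometry---depends continuously on the bounded linear data $(\varphi(x_0),\nabla\varphi(x_0))$. The main obstacle I anticipate is the quantitative upgrade of Colin de Verdière's lemma to the bound $\|\rho_\e(\varphi)\|_{H^1}=o(\e^{N/2})$, uniformly over the unit sphere of the eigenspace: this requires controlling the quasi-mode defect of $\tilde\varphi^{(i)}_\e$ at the correct order in an appropriate dual norm, and checking that the constants appearing in the near-isometry depend continuously on the basis of quasi-modes, so that passing to the supremum over the finite-dimensional space $E(\lambda_n(\Omega))$ preserves the $o(\e^N)$ remainder.
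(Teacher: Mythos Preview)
Your strategy is in the right spirit—Colin de Verdi\`ere plus quasi-modes—but two points deserve correction.

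First, a factual one: the quasi-modes actually used in the paper's proofs of Theorem~\ref{thm:1} and of the present theorem are \emph{not} cut-off rescalings of the limit profile $U_{\Sigma,\nabla\varphi(x_0)\cdot\nnu}$. The paper works with the exact torsion function $U_\e^\varphi\in H^1(\Omega_\e)$, defined as the weak solution of $-\Delta U_\e^\varphi+U_\e^\varphi=0$ in $\Omega_\e$ with $\partial_{\nnu}U_\e^\varphi=\partial_{\nnu}\varphi$ on $\partial\Sigma_\e$ and homogeneous Neumann data on $\partial\Omega$; the quasi-mode is $P_\e(\varphi)=\varphi-U_\e^\varphi$. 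The blow-up analysis (Lemma~\ref{lemma:blowup}) then identifies $\|U_\e^\varphi\|_{H^1(\Omega_\e)}^2$ with $\e^N\|\nabla U_{\Sigma,\nabla\varphi(x_0)\cdot\nnu}\|_{L^2(\R^N\setminus\Sigma)}^2$ up to $o(\e^N)$, uniformly.

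Second, and more importantly, the step you flag as the ``main obstacle''—upgrading the $L^2$ bound from Colin de Verdi\`ere to $\|\rho_\e(\varphi)\|_{H^1}=o(\e^{N/2})$—is exactly where the paper's choice of corrector pays off, and your proposal does not close this gap. Lemma~\ref{lemma:CdV} yields only $\|P_\e(\varphi)-\Pi_\e^n P_\e(\varphi)\|_{L^2(\Omega_\e)}=o(\e^{N/2})$. The paper upgrades to $H^1$ by exploiting that the exact corrector makes $P_\e(\varphi)$ satisfy an \emph{exact} equation, $-\Delta P_\e(\varphi)+P_\e(\varphi)=\lambda_n P_\e(\varphi)+\lambda_n U_\e^\varphi$ in $\Omega_\e$ with homogeneous Neumann data on all of $\partial\Omega_\e$. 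Hence $f_\e^\varphi:=P_\e(\varphi)-\Pi_\e^n P_\e(\varphi)$ solves a Neumann problem whose source is $\lambda_n f_\e^\varphi+\lambda_n U_\e^\varphi$ plus terms of the form $(\lambda_{n+i-1}^\e-\lambda_n)a_i^\e\varphi_{n+i-1}^\e$; testing against $f_\e^\varphi$ and using the already-established $L^2$ bounds gives $\|f_\e^\varphi\|_{H^1}^2=o(\e^N)$ directly. Two further short steps pass from $\Pi_\e^n P_\e(\varphi)$ to $\Pi_\e^n\varphi$ and then to its $L^2$-normalisation. With your cut-off corrector $V_\e^\varphi$, the quasi-mode $\tilde\varphi_\e$ no longer satisfies an exact Neumann problem: there are residual boundary and interior errors from the cut-off and from linearising $\varphi$ at $x_0$, so this elliptic bootstrap is not immediately available. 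Your suggestion of controlling the quasi-mode defect ``in an appropriate dual norm'' is on the right track, but you would need to show that the residual is $o(\e^{N/2})$ in $(H^1(\Omega_\e))^*$ uniformly in $\varphi$, which is additional work you do not supply.
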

	Having established the general framework, we now focus on the
	specific case $\Sigma = B_1$.  For every
	$N\geq 2$ and $x_0\in\Omega$, we consider the bilinear form
	\begin{align}\label{eq:defB}
		& \mathcal B_{x_0}: E(\lambda_n(\Omega))\times
		E(\lambda_n(\Omega))\to\R,\\
		\notag&\mathcal
		B_{x_0}(\varphi,\psi)=\omega_N\left(\frac{N}{N-1}\,\nabla\varphi(x_0)\cdot
		\nabla\psi(x_0)-(\lambda_n(\Omega)-1)\varphi(x_0)\psi(x_0)\right),
	\end{align}
	where $\omega_N=|B_1|$ is
	the $N$-dimensional Lebesgue measure of $B_1$.
	Let
	\begin{equation*}
		\{\gamma_i^{x_0}\}_{i=1,\dots,m}
	\end{equation*}
	denote the eigenvalues of $\mathcal
	B_{x_0}$ (in descending order).

	If $N\geq3$, we can provide an  explicit
	expression for the limit bilinear form \eqref{eq:limit-bi-form},
	proving  that this coincides with $\mathcal B_{x_0}$.

	\begin{proposition}\label{cor:ball_intr}
		Let $N\geq 3$, $x_0\in\Omega$, and $\mathcal{L}_{x_0,B_1}:
		E(\lambda_n(\Omega))\times E(\lambda_n(\Omega))\to\R$ be the
		bilinear form defined in \eqref{eq:limit-bi-form} with
		$\Sigma=B_1$. Then
		\begin{equation*}
			\mathcal{L}_{x_0,B_1}(\varphi,\psi)=\mathcal
			B_{x_0}(\varphi,\psi),\quad \text{for every $\varphi,\psi\in E(\lambda_n(\Omega))$}.
		\end{equation*}
	\end{proposition}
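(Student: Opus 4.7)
The plan is to perform a direct computation that exploits the rotational symmetry of $B_1$. Setting $a:=\nabla\varphi(x_0)$ and $b:=\nabla\psi(x_0)$ for brevity, a comparison of \eqref{eq:limit-bi-form} (with $\Sigma=B_1$) and \eqref{eq:defB} shows that, since $|B_1|=\omega_N$, the zero-order terms $-\omega_N(\lambda_n(\Omega)-1)\varphi(x_0)\psi(x_0)$ already coincide. Hence it is enough to prove
\begin{equation*}
\int_{\R^N\setminus B_1}\nabla U_{B_1,a\cdot\nnu}\cdot \nabla U_{B_1,b\cdot\nnu}\dx=\frac{\omega_N}{N-1}\,a\cdot b,
\end{equation*}
which, when combined with the extra $\omega_N\,a\cdot b$ coming from the second line of \eqref{eq:limit-bi-form}, yields the coefficient $\frac{N}{N-1}\omega_N$ appearing in $\mathcal B_{x_0}$.

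The next step is to solve the exterior Neumann problem characterising $U_{B_1,a\cdot\nnu}$ explicitly. On $\partial B_1$ the outer unit normal to $B_1$ is $\nnu(x)=x$, so the boundary datum is the linear function $a\cdot x$. Inspired by the fact that $x_j/|x|^N$ is, up to a constant, the $j$-th partial derivative of the fundamental solution of $-\Delta$ on $\R^N$, I propose the ansatz
\begin{equation*}
U_a(x)=-\frac{1}{N-1}\cdot\frac{a\cdot x}{|x|^N},
\end{equation*}
which is harmonic in $\R^N\setminus\{0\}$ and satisfies $U_a(x)=O(|x|^{1-N})$ and $|\nabla U_a(x)|=O(|x|^{-N})$; for $N\geq 3$ this places $U_a$ in $\mathcal{D}^{1,2}(\R^N\setminus B_1)$. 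A direct computation of $\partial_\nnu U_a$ on $\partial B_1$ matches the datum $a\cdot x$, so by uniqueness of the minimiser $U_{B_1,a\cdot\nnu}=U_a$.

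Finally, to compute the Dirichlet integral I apply Green's first identity in $\R^N\setminus\overline{B_1}$, discarding the boundary term at infinity thanks to the decay above. Using harmonicity of $U_a$ and the fact that the outward normal to the exterior domain on $\partial B_1$ is $-\nnu$, the cross-integral reduces to
\begin{equation*}
\int_{\R^N\setminus B_1}\nabla U_a\cdot\nabla U_b\dx=-\int_{\partial B_1}U_b\,(a\cdot x)\ds=\frac{1}{N-1}\int_{\partial B_1}(a\cdot x)(b\cdot x)\ds.
\end{equation*}
The remaining spherical integral is handled by $\int_{\partial B_1}x_ix_j\ds=\omega_N\,\delta_{ij}$ (a consequence of rotational symmetry together with $|\partial B_1|=N\omega_N$), giving the target value $\frac{\omega_N}{N-1}\,a\cdot b$. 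The main subtlety lies not in any single computation but in the bookkeeping: tracking sign conventions for the normal derivative on the two sides of $\partial B_1$, and verifying that the proposed ansatz indeed belongs to $\mathcal D^{1,2}(\R^N\setminus B_1)$ (which is where the hypothesis $N\geq 3$ enters). Once these are settled, the remainder is elementary.
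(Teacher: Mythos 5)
Your proof is correct and follows essentially the same approach as the paper: both hinge on the explicit formula $U_{B_1,\bm{b}\cdot\nnu}(x)=-\tfrac{1}{N-1}|x|^{-N}\,\bm{b}\cdot x$ for the exterior torsion function on $\R^N\setminus B_1$. The paper recalls this from \cite[Lemma 6.1]{FLO} and then polarizes the resulting quadratic form, whereas you re-derive the formula from the ansatz, verify membership in $\mathcal D^{1,2}(\R^N\setminus B_1)$, and compute the cross-term directly via Green's identity and the spherical moment $\int_{\partial B_1}x_ix_j\ds=\omega_N\delta_{ij}$ -- a minor presentational variant rather than a genuinely different method.
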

	If $N=2$, our strategy requires some adjustments. Indeed, the
	blow-up analysis for the torsional rigidity and the corresponding
	torsion functions, which works for $N\geq3$, is not available in the
	case $N=2$. Nevertheless, exploiting the study of torsion functions
	carried out in \cite[Section 6.2]{FLO}, in the case of spherical
	holes we are still able to obtain, even in planar domains, the same
	type of asymptotic expansion obtained above for $N\geq3$.
	\begin{theorem}\label{thm:exp-2d}
		Let $N=2$ and $x_0\in\Omega$.  Let
		$\{\gamma_i^{x_0}\}_{i=1,\dots,m}$ be the eigenvalues (in descending order) of the
		bilinear form \eqref{eq:defB}. 
		Then, for
		every $i=1,\dots,m$ we have
		\begin{equation*}
			\lambda_{n+i-1}\left(\Omega_\e^{x_0,B_1}\right)=
			\lambda_n(\Omega)-\gamma_i^{x_0}\e^2+o(\e^2)\quad\text{as }\e\to 0.
		\end{equation*}
	\end{theorem}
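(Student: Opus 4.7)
The plan is to mirror, in two dimensions, the strategy used for Theorem \ref{thm:1}: build an $m$-dimensional test subspace in $H^1(\Omega_\e^{x_0,B_1})$ out of suitably corrected eigenfunctions of the unperturbed problem, expand the Rayleigh quotient on that subspace, and feed the resulting matrix into the Lemma on small eigenvalues of Colin de Verdière. What prevents a verbatim transposition of the $N\geq 3$ proof is that the completion $\D^{1,2}(\R^2\setminus\Sigma)$ is not a good functional framework for a general blow-up, because of the logarithmic growth of planar harmonic functions at infinity. For the specific choice $\Sigma=B_1$, however, the exterior Neumann problem
\begin{equation*}
-\Delta U=0\ \text{in }\R^2\setminus\overline{B_1},\qquad
\partial_{\nnu}U=\nabla\varphi(x_0)\cdot\nnu\ \text{on }\partial B_1,
\end{equation*}
is solved in closed form by $U_\varphi(x)=-\nabla\varphi(x_0)\cdot x/|x|^2$, which decays like $|x|^{-1}$ and has finite Dirichlet energy. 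This is precisely the setting covered by the fine analysis carried out in \cite[Section 6.2]{FLO}.

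For every $\varphi\in E(\lambda_n(\Omega))$ I would build the corrected test function
\begin{equation*}
\tilde\varphi_\e(x):=\varphi(x)+\eta_\e(x)\,\e\,U_\varphi\!\left(\tfrac{x-x_0}{\e}\right),
\end{equation*}
where $\eta_\e$ is a cutoff equal to one in a fixed neighbourhood of $x_0$ and vanishing outside a larger one. By construction, $\tilde\varphi_\e\in H^1(\Omega_\e^{x_0,B_1})$ and cancels, to leading order, the Neumann trace on $\partial\Sigma_\e^{x_0}$ coming from the Taylor expansion $\varphi(x)\simeq\varphi(x_0)+\nabla\varphi(x_0)\cdot(x-x_0)$. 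The linear map $\varphi\mapsto\tilde\varphi_\e$ then produces an $m$-dimensional subspace $F_\e\subset H^1(\Omega_\e^{x_0,B_1})$ on which I would compute the matrix of the Rayleigh quotient. Using the scaling $y=(x-x_0)/\e$, the explicit form of $U_\varphi$, and the equation $-\Delta\varphi+\varphi=\lambda_n(\Omega)\varphi$, this matrix should equal, modulo $o(\e^2)$, the form
\begin{equation*}
\e^2\!\left(\int_{\R^2\setminus B_1}\!\!\nabla U_\varphi\cdot\nabla U_\psi\,dy+|B_1|\nabla\varphi(x_0)\cdot\nabla\psi(x_0)-|B_1|(\lambda_n(\Omega)-1)\varphi(x_0)\psi(x_0)\right).
\end{equation*}
Evaluating the first two terms with the closed-form $U_\varphi$ collapses them into $2\pi\,\nabla\varphi(x_0)\cdot\nabla\psi(x_0)$, so that the leading matrix coincides with $\mathcal B_{x_0}(\varphi,\psi)$. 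The Lemma on small eigenvalues then yields the asymptotics $\lambda_{n+i-1}(\Omega_\e^{x_0,B_1})=\lambda_n(\Omega)-\gamma_i^{x_0}\e^2+o(\e^2)$.

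The delicate step is the $o(\e^2)$ remainder control in the Rayleigh expansion. In the higher-dimensional proof, the commutator terms generated by $\eta_\e$ — essentially $(\Delta\eta_\e)\,u_\e$ and $\nabla\eta_\e\cdot\nabla u_\e$ — can be absorbed uniformly by a Hardy-type inequality available in $\D^{1,2}(\R^N\setminus\Sigma)$; no such global tool is at hand in the plane. The remedy is to exploit the sharp pointwise decay $|U_\varphi(y)|\leq C|y|^{-1}$, $|\nabla U_\varphi(y)|\leq C|y|^{-2}$ together with the quantitative estimates on planar torsion functions developed in \cite[Section 6.2]{FLO}, combined with a choice of $\eta_\e$ localized on an annulus whose radii scale as an intermediate power of $\e$. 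Showing that this machinery keeps every commutator and boundary contribution genuinely $o(\e^2)$, thereby verifying the hypotheses of the Lemma on small eigenvalues, is the main technical hurdle of the proof.
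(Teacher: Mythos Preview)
Your overall plan—correct the eigenfunctions, build an $m$-dimensional test space, and invoke Colin de Verdi\`ere's lemma—is exactly what the paper does. The divergence is in \emph{how} the correction is built, and that difference is not cosmetic. The paper does not use the explicit exterior profile $U_\varphi(y)=-\nabla\varphi(x_0)\cdot y/|y|^2$ glued in by a cutoff. Instead it takes, for each $\varphi\in E(\lambda_n)$, the \emph{exact} torsion function $U_\e^\varphi\in H^1(\Omega_\e)$, i.e.\ the unique weak solution of $-\Delta U_\e^\varphi+U_\e^\varphi=0$ in $\Omega_\e$ with $\partial_{\nnu}U_\e^\varphi=\partial_{\nnu}\varphi$ on $\partial\Sigma_\e$, and sets $P_\e(\varphi)=\varphi-U_\e^\varphi$. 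Because $U_\e^\varphi$ matches the \emph{full} Neumann trace $\partial_{\nnu}\varphi$ (not only its linearization at $x_0$), an integration by parts gives the clean identity $q_\e(u,P_\e\varphi)=\lambda_n\int_{\Omega_\e}uU_\e^\varphi\dx$ for every $u\in H^1(\Omega_\e)$, with no boundary remainder at all. Hence $\delta\le\lambda_n\sqrt{\omega(\e)}$ with $\omega(\e)=\sup\|U_\e^\varphi\|_{L^2(\Omega_\e)}^2=o(\e^2)$, and the lemma produces an $o(\e^2)$ error. The limit bilinear form then drops out of the asymptotics of $\|U_\e^\varphi\|_{H^1(\Omega_\e)}^2$ already proved in \cite[Proposition~6.3]{FLO}; no cutoff, no commutators, no boundary term.

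Your construction $\tilde\varphi_\e=\varphi+\eta_\e\,\e\,U_\varphi((\cdot-x_0)/\e)$ has a genuine gap at hypothesis~(H2). After integrating by parts,
\[
q_\e(u,\tilde\varphi_\e)=\int_{\Omega_\e}u\big[-\Delta w_\e+(1-\lambda_n)w_\e\big]\dx+\int_{\partial B_\e(x_0)}u\,g_\e\ds,
\]
where $g_\e=(\nabla\varphi(x)-\nabla\varphi(x_0))\cdot\nnu=O(\e)$ is the residual Neumann trace. The bulk term is indeed harmless, but the boundary integral is \emph{not} an $L^2(\Omega_\e)$-functional of $u$: in (H2) the supremum runs over all $u\in H^1(\Omega_\e)$ with $\|u\|_{L^2(\Omega_\e)}=1$ and no control on $\|\nabla u\|_{L^2}$. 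A scaled trace inequality gives at best $\|u\|_{L^2(\partial B_\e)}=O(\e^{-1/2})$, and this is saturated by bumps of height $\sim\e^{-1}$ supported in $B_{2\e}\setminus B_\e$; for such $u$ one computes $\int_{\partial B_\e}u\,g_\e\ds\sim\e^{-1}\int_{\partial B_\e}g_\e\ds\sim\e^{-1}(\lambda_n-1)\varphi(x_0)\pi\e^2\sim\e$. Thus $\delta\gtrsim\e$ whenever $\varphi(x_0)\neq0$, so $\delta^2/\gamma$ is only $O(\e^2)$—the same order as the leading term, not $o(\e^2)$. Neither the pointwise decay of $U_\varphi$ nor an intermediate-scale cutoff touches this obstruction, which lives on $\partial B_\e$. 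To close the argument along your lines you would have to add a second-order corrector matching $(D^2\varphi(x_0)(x-x_0))\cdot\nnu$ on $\partial B_\e$; this is precisely what the exact torsion function supplies automatically, which is why the paper uses it.
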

	Putting together the results of Theorem \ref{thm:1}, Proposition
	\ref{cor:ball_intr}, and Theorem \ref{thm:exp-2d}, we 
	derive directly the following corollary.
	\begin{corollary}\label{cor:spherical}
		Let $x_0\in\Omega$ and $N\geq2$.
		Then, for
		every $i=1,\dots,m$,
		\begin{equation*}
			\lambda_{n+i-1}\left(\Omega_\e^{x_0,B_1}\right)=
			\lambda_n(\Omega)-\gamma_i^{x_0}\e^N+o(\e^N)\quad\text{as }\e\to 0.
		\end{equation*}
	\end{corollary}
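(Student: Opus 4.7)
The proof is a direct assembly of the three preceding results, split according to the dimension. The only point requiring attention is the identification of the two bilinear forms $\mathcal{L}_{x_0,B_1}$ and $\mathcal{B}_{x_0}$, which is precisely the content of Proposition \ref{cor:ball_intr}.

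For $N\geq 3$, I would apply Theorem \ref{thm:1} with the choice $\Sigma=B_1$, obtaining for each $i=1,\dots,m$
\[
\lambda_{n+i-1}\bigl(\Omega_\e^{x_0,B_1}\bigr)=\lambda_n(\Omega)-\ell_i^{x_0,B_1}\e^N+o(\e^N)\quad\text{as }\e\to 0,
\]
where $\{\ell_i^{x_0,B_1}\}_{i=1,\dots,m}$ are the eigenvalues, in descending order, of the bilinear form $\mathcal{L}_{x_0,B_1}$ defined in \eqref{eq:limit-bi-form}. Proposition \ref{cor:ball_intr} asserts that $\mathcal{L}_{x_0,B_1}=\mathcal{B}_{x_0}$ as symmetric bilinear forms on the finite-dimensional space $E(\lambda_n(\Omega))$. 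Since the eigenvalues of a symmetric bilinear form on $E(\lambda_n(\Omega))$ depend only on the form itself (and on the $L^2(\Omega)$-inner product, which is the reference inner product in both cases), this identification gives $\ell_i^{x_0,B_1}=\gamma_i^{x_0}$ for every $i=1,\dots,m$, and substitution yields the claimed expansion.

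For $N=2$, no further manipulation is required: Theorem \ref{thm:exp-2d} provides exactly the asserted expansion, since $\e^N=\e^2$ and the coefficients $\gamma_i^{x_0}$ appear directly in the statement of that theorem.

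There is no substantive obstacle in this step; all the hard analysis has already been carried out in the proofs of Theorem \ref{thm:1}, Proposition \ref{cor:ball_intr}, and Theorem \ref{thm:exp-2d}. The present argument reduces to a case split on the dimension and a notational matching of eigenvalues through Proposition \ref{cor:ball_intr}.
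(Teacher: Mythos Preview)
Your proof is correct and follows exactly the approach indicated in the paper, which states that the corollary is obtained by ``putting together the results of Theorem \ref{thm:1}, Proposition \ref{cor:ball_intr}, and Theorem \ref{thm:exp-2d}.'' The case split on the dimension and the identification $\ell_i^{x_0,B_1}=\gamma_i^{x_0}$ via Proposition \ref{cor:ball_intr} are precisely what is intended.
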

	This corollary is the final piece we need to obtain the proof of
	Theorem \ref{thm:3}, together with the characterization of the points
	$x_0$ for which eigenvalues $\gamma_i^{x_0}$ with different indexes
	overlap.
	
	\subsection{Notation}
	We collect here some notation that will be used throughout the
	paper:
	\begin{itemize}
		\item $\lambda_n:=\lambda_n(\Omega)$;
		\item we may drop the dependence on $x_0$ and $\Sigma$, and simply
		denote $\lambda_n^\e:=\lambda_n(\Omega_\e^{x_0,\Sigma})$;
		\item analogously, we may denote
		$\Omega_\e:=\Omega_\e^{x_0,\Sigma}$ and
		$\Sigma_\e:=\Sigma_\e^{x_0}$.
	\end{itemize}
	
	\subsection*{Outline of the paper} In the next section, we recall a
	notion of boundary torsional rigidity and prove some preliminary
	estimates on the torsion functions associated with eigenfunctions,
	aiming to obtain formulations that are uniform within the eigenspace.
	In section \ref{sec:asympt-mult-eigenv}, we apply the \emph{Lemma on
		small eigenvalues} by Y. Colin de Verdière \cite{ColindeV1986} to
	derive an asymptotic expansion of eigenbranches bifurcating from a
	multiple eigenvalue $\lambda_n(\Omega)$, consequently establishing
	Theorems \ref{thm:1}, \ref{thm:exp-2d}, and \ref{thm:eigenfunctions}.
	Section \ref{sec:splitting} is devoted to the proof of Theorem
	\ref{thm:3}, which is based on Corollary \ref{cor:spherical} and the
	remark that points $x_0$ where eigenvalues $\gamma_i^{x_0}$ overlap
	for different indexes $i$ correspond to the zeros of certain analytic
	functions.  Section \ref{sec:numerics} presents some numerical
	experiments in situations not covered by the theoretical results of
	this work, offering interesting directions for future investigation.
	Finally, \Cref{appendix} deals with the Dirichlet case, showing how
	our method recovers the known results established by Flucher in
	\cite{flucher}.

	\section{Preliminaries}
	
	\noindent
	We recall from \cite{FLO} the notion of boundary torsional rigidity
	suitable for dealing with the Neumann problem in a perforated domain,
	see also \cite{brasco2024}. Related notions of torsional rigidity
	have also arisen in other contexts, still within the framework of
	singularly perturbed eigenvalue problems, see
	e.g. \cite{AO2023,FNOS,O}.
	
	For any open Lipschitz set $E$ with $\overline{E}\sub \Omega$ and any
	$f\in L^2(\partial E)$, we defined the \emph{$f$-torsional rigidity of
		$\partial E$ relative to $\overline{\Omega}\setminus E$} as
	\begin{equation*}
		\mathcal{T}_{\overline{\Omega}\setminus E}(\partial E,f):=
		-2\min\left\{ \frac{1}{2}\int_{\Omega\setminus E}(|\nabla
		u|^2+u^2)\dx
		-\int_{\partial E}uf\ds\colon u\in H^1(\Omega\setminus
		\overline{E})
		\right\}.
	\end{equation*}
	We denote by $U_{\Omega,E,f}\in H^1(\Omega\setminus \overline{E})$ the unique
	minimizer. In the sequel, whenever there is no ambiguity, for any
	$\varphi\in E(\lambda_n)$ and $\e\in(0,\e_0)$, we simply denote
	\begin{equation}\label{eq:torfun}
		U_\e^\varphi:=U_{\Omega,\Sigma_\e,\partial_{\nnu}\varphi}.
	\end{equation}
	One can easily check that $U_\e^\varphi$ solves
	\begin{equation}\label{eq:equationUphi}
		\int_{\Omega_\e}(\nabla U_\e^\varphi\cdot\nabla u+U_\e^\varphi u)
		\dx=\int_{\partial\Sigma_\e}u\,\partial_{\nnu}\varphi\ds\quad\text{for
			all }
		u\in H^1(\Omega_\e).
	\end{equation}
	The following lemma provides an estimate of the vanishing order of the
	energy of the functions $U_\e^\varphi$ as $\e\to0$.
	\begin{lemma}\label{lemma:H1_decay}
		Let either $N=2$ and $\Sigma=B_1$ or $N\geq 3$. We have
		\begin{equation*}
			\sup_{\substack{\varphi\in E(\lambda_n) \\
					\norm{\varphi}_{L^2(\Omega)}=1 }}
			\norm{U_\e^\varphi}_{H^1(\Omega_\e)}^2=
			\sup_{\substack{\varphi\in E(\lambda_n) \\
					\norm{\varphi}_{L^2(\Omega)}=1 }}
			\int_{\Omega_\e}\big(|\nabla
			U_\e^\varphi|^2+|U_\e^\varphi|^2\big)\dx
			= O(\e^N)\quad\text{as }\e\to 0.
		\end{equation*}
	\end{lemma}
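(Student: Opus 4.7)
The plan is to start from the energy identity that comes from testing \eqref{eq:equationUphi} with $u=U_\e^\varphi$:
\begin{equation*}
\int_{\Omega_\e}\bigl(|\nabla U_\e^\varphi|^2+|U_\e^\varphi|^2\bigr)\dx=\int_{\partial\Sigma_\e}U_\e^\varphi\,\partial_{\nnu}\varphi\ds.
\end{equation*}
Thus the quantity we need to bound is a boundary functional over $\partial\Sigma_\e$, on which $\partial_{\nnu}\varphi$ is of order $|\nabla\varphi(x_0)|+O(\e)$ and whose surface measure scales like $\e^{N-1}$. The strategy is to exploit the natural blow-up $V_\e^\varphi(y):=\e^{-1}U_\e^\varphi(x_0+\e y)$, defined on the rescaled domain $\e^{-1}(\Omega-x_0)\setminus\overline\Sigma$. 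Under this change of variables one has
\begin{equation*}
\int_{\Omega_\e}|\nabla U_\e^\varphi|^2\dx=\e^{N}\!\int_{\e^{-1}(\Omega_\e-x_0)}\!|\nabla V_\e^\varphi|^2\dy,\qquad \int_{\Omega_\e}|U_\e^\varphi|^2\dx=\e^{N+2}\!\int_{\e^{-1}(\Omega_\e-x_0)}\!|V_\e^\varphi|^2\dy,
\end{equation*}
so the desired $O(\e^N)$ bound reduces to showing that the rescaled energies of $V_\e^\varphi$ stay bounded as $\e\to 0$.

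For $N\geq 3$, the plan is then to rewrite the rescaled problem in weak form and, invoking the uniform Sobolev extension property guaranteed by \eqref{eq:connect} (see the references cited in the excerpt), to apply the standard argument developed in \cite{FLO}: one tests with $V_\e^\varphi$ itself, uses the trace inequality on $\partial\Sigma$, and concludes that $\{V_\e^\varphi\}$ is bounded in $\mathcal{D}^{1,2}(\R^N\setminus\Sigma)$ because $\partial_{\nnu}\varphi$ rescales to $\nabla\varphi(x_0)\cdot\nnu+o(1)$ on $\partial\Sigma$. For $N=2$ with $\Sigma=B_1$, the $\mathcal{D}^{1,2}$ framework is no longer available, and here I would invoke the explicit analysis of the radial torsion function carried out in \cite[Section~6.2]{FLO}, where the corresponding $O(\e^2)$ estimate is obtained by essentially explicit computation.

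The final step is the uniformity with respect to $\varphi$ in the $L^2$-unit sphere of $E(\lambda_n)$. This is almost automatic: since the right-hand side of \eqref{eq:equationUphi} depends linearly on $\varphi$ and the bilinear form on the left is independent of $\varphi$, the map $\varphi\mapsto U_\e^\varphi$ is linear. Fixing an $L^2$-orthonormal basis $\{\varphi^{(1)},\dots,\varphi^{(m)}\}$ of $E(\lambda_n)$, writing $\varphi=\sum_i a_i\varphi^{(i)}$ with $\sum_i a_i^2=1$, and applying Cauchy–Schwarz gives
\begin{equation*}
\|U_\e^\varphi\|_{H^1(\Omega_\e)}^2\leq m\sum_{i=1}^m\|U_\e^{\varphi^{(i)}}\|_{H^1(\Omega_\e)}^2,
\end{equation*}
so the supremum is controlled by a finite sum of quantities to which the previous step applies. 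The main technical obstacle I expect is the rigorous control on the rescaled boundary integral $\int_{\partial\Sigma}V_\e^\varphi\,(\partial_{\nnu}\varphi)(x_0+\e\,\cdot)\,dS$: one needs the compactness of the trace on $\partial\Sigma$ for the family $\{V_\e^\varphi\}$, which is precisely where the uniform extension property from \eqref{eq:connect} becomes essential, and where care is required to ensure that the constants do not blow up as $\e\to 0$.
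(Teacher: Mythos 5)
Your proposal is essentially the same as the paper's proof: both reduce the uniform supremum over the $L^2$-unit sphere of $E(\lambda_n)$ to the finite orthonormal basis via linearity of $\varphi\mapsto U_\e^\varphi$ and Cauchy--Schwarz, and then invoke the single-eigenfunction estimates from \cite{FLO} (Lemma~5.4 for $N\geq 3$, Proposition~6.3 for $N=2$, $\Sigma=B_1$). The paper simply cites those estimates outright rather than re-sketching the blow-up $V_\e^\varphi(y)=\e^{-1}U_\e^\varphi(x_0+\e y)$ that underlies them, but the substance is identical.
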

	\begin{proof}
		Let $\varphi\in E(\lambda_n)$ be such that
		$\norm{\varphi}_{L^2(\Omega)}=1$. We may write $\varphi$ as
		\begin{equation*}
			\varphi=\sum_{i=1}^m a_i\varphi_{n+i-1},
		\end{equation*}
		where $\sum_{i=1}^m a_i^2=1$. By linearity, we have
		$U_\e^\varphi=\sum_{i=1}^m a_i U_\e^{\varphi_{n+i-1}}$,
		hence,
		by the Cauchy-Schwarz inequality,
		\begin{align*}
			\norm{U_\e^\varphi}_{H^1(\Omega_\e)}^2&\leq 
			\left(\sum_{i=1}^m|a_i|\norm{U_\e^{\varphi_{n+i-1}}}_{H^1(\Omega_\e)}\right)^2\\
			&\leq
			\left(\sum_{i=1}^ma_i^2\right)\left(\sum_{i=1}^m
			\norm{U_\e^{\varphi_{n+i-1}}}_{H^1(\Omega_\e)}^2\right)
			\leq m
			\max_{i=1,\dots,m}\norm{U_\e^{\varphi_{n+i-1}}}_{H^1(\Omega_\e)}^2.
		\end{align*}
		In \cite{FLO} we proved that
		\begin{equation*}
			\norm{U_\e^{\varphi_{n+i-1}}}_{H^1(\Omega_\e)}^2=O(\e^{N+2k_i-2})
			\quad\text{as }\e\to 0,
		\end{equation*}
		where $k_i\geq 1$ is the vanishing order of
		$\varphi_{n+i-1}-\varphi_{n+i-1}(x_0)$ at $x_0$, with the
		exception of the case $N=2$ and $k_i=2$, for which
		$\norm{U_\e^{\varphi_{n+i-1}}}_{H^1(\Omega_\e)}^2=O(\e^{4}|\log\e|)$
		as $\e\to0$.  In particular, the case $N\geq 3$ is proved in
		\cite[Lemma 5.4]{FLO} (see also Remark 3.5), while the case $N=2$
		and $\Sigma=B_1$ can be found in \cite[Proposition 6.3]{FLO}. This
		concludes the proof.
	\end{proof}
	We also observe the following property of negligibility of the
	$L^2$ mass of functions $U_\e^\varphi$ with respect to their energy,
	uniformly in $E(\lambda_n)$ as $\e\to0$.
	\begin{lemma}\label{lemma:L2_decay}
		Let $N\geq 2$. Then
		\begin{equation}\label{eq:L2_decay_th1}
			\sup_{\substack{\varphi\in E(\lambda_n) \\
					\norm{\varphi}_{L^2(\Omega)}=1 }}
			\frac{\norm{U_\e^\varphi}_{L^2(\Omega_\e)}^2}
			{\norm{U_\e^\varphi}_{H^1(\Omega_\e)}^2}\to 0\quad\text{as }\e\to 0.
		\end{equation}
		In particular, letting
		\begin{equation*}
			\omega(\e):=\sup_{\substack{\varphi\in E(\lambda_n) \\
					\norm{\varphi}_{L^2(\Omega)}=1
			}}\norm{U_\e^\varphi}_{L^2(\Omega_\e)}^2,
		\end{equation*}
		if either $N=2$ and $\Sigma=B_1$ or $N\geq 3$, we have
		$\e^{-N}\omega(\e)\to 0$ as $\e\to 0$.
	\end{lemma}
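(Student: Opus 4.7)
The plan is to rescale at the hole. Setting
$V_\e^\varphi(y):=\e^{-1}U_\e^\varphi(x_0+\e y)$ on
$D_\e:=\e^{-1}(\Omega-x_0)\setminus\Sigma$, a direct change of variables
rewrites the ratio in \eqref{eq:L2_decay_th1} as
\begin{equation*}
\frac{\|U_\e^\varphi\|_{L^2(\Omega_\e)}^2}{\|U_\e^\varphi\|_{H^1(\Omega_\e)}^2}
=\frac{\e^2\|V_\e^\varphi\|_{L^2(D_\e)}^2}
{\|\nabla V_\e^\varphi\|_{L^2(D_\e)}^2+\e^2\|V_\e^\varphi\|_{L^2(D_\e)}^2},
\end{equation*}
so the task reduces to showing $\e^2\|V_\e^\varphi\|_{L^2(D_\e)}^2
=o\bigl(\|\nabla V_\e^\varphi\|_{L^2(D_\e)}^2\bigr)$ uniformly on the
unit $L^2$-sphere of $E(\lambda_n)$. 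From \eqref{eq:equationUphi}, the
function $V_\e^\varphi$ solves $-\Delta V+\e^2 V=0$ in $D_\e$ with
Neumann data $\partial_{\nnu}\varphi(x_0+\e\,\cdot)$ on $\partial\Sigma$
and homogeneous Neumann condition on the outer boundary.

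Exploiting that $\dim E(\lambda_n)=m<\infty$, I would argue by
contradiction: suppose there exist $\e_k\to 0$ and
$\varphi_k\in E(\lambda_n)$ with $\|\varphi_k\|_{L^2(\Omega)}=1$ along
which the ratio stays above some $\delta_0>0$. By compactness of the
unit sphere, passing to a subsequence one has
$\varphi_k\to\varphi_\infty$ in $C^1$ on a neighborhood of $x_0$ (by
interior regularity for Neumann eigenfunctions), so the rescaled
Neumann data $g_k(y):=\partial_{\nnu}\varphi_k(x_0+\e_k y)$ converge
uniformly on $\partial\Sigma$ to
$g_\infty(y):=\nabla\varphi_\infty(x_0)\cdot\nnu(y)$. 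Moreover, the
divergence theorem gives
$\int_{\partial\Sigma}g_\infty\ds=\nabla\varphi_\infty(x_0)\cdot
\int_{\partial\Sigma}\nnu\ds=0$.

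In the nondegenerate case $\nabla\varphi_\infty(x_0)\ne 0$, the blow-up
analysis of torsion functions carried out in \cite{FLO} yields
$V_{\e_k}^{\varphi_k}\to U_{\Sigma,g_\infty}$ strongly in
$H^1_{\textup{loc}}(\R^N\setminus\overline\Sigma)$, the limit being the
Neumann harmonic torsion function on $\R^N\setminus\overline\Sigma$.
Classical decay estimates for harmonic exterior Neumann problems give
$|U_{\Sigma,g_\infty}(y)|\leq C(1+|y|)^{2-N}$ for $N\geq 3$ and
$|U_{\Sigma,g_\infty}(y)|\leq C(1+|y|)^{-1}$ for $N=2$ (the mean-zero
property of $g_\infty$ rules out a logarithmic term); Caccioppoli-type
estimates propagate the same bound uniformly to $V_{\e_k}^{\varphi_k}$
on annuli of $D_{\e_k}$. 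Integrating over $D_{\e_k}\subset
B_{C/\e_k}(0)$ then yields
$\|V_{\e_k}^{\varphi_k}\|_{L^2(D_{\e_k})}^2\leq C\,f(\e_k)$, where
$f(\e)=1$ for $N\geq 5$, $f(\e)=\log(1/\e)$ for $N\in\{2,4\}$, and
$f(\e)=1/\e$ for $N=3$. Since $\|\nabla V_{\e_k}^{\varphi_k}\|_{L^2}^2
\to\|\nabla U_{\Sigma,g_\infty}\|_{L^2(\R^N\setminus\Sigma)}^2>0$ and
$\e_k^2 f(\e_k)\to 0$ in every dimension, the ratio tends to zero,
contradicting the choice of $\delta_0$.

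The main obstacle is the degenerate case $\nabla\varphi_\infty(x_0)=0$,
where both numerator and denominator of the rescaled ratio vanish; I
would handle it by renormalizing
$\tilde V_k:=V_{\e_k}^{\varphi_k}\big/\|\nabla V_{\e_k}^{\varphi_k}\|_{L^2(D_{\e_k}\cap B_R)}$
for a fixed large $R$ and rerunning the blow-up on the normalized
sequence: its limit is a nontrivial harmonic function on
$\R^N\setminus\overline\Sigma$ governed by the first nonvanishing Taylor
coefficient of $\varphi_\infty-\varphi_\infty(x_0)$ at $x_0$, with the
same polynomial decay, and the integration argument above still
applies. Finally, the ``in particular'' statement follows by combining
the first assertion with Lemma~\ref{lemma:H1_decay}:
\begin{equation*}
\omega(\e)\leq\bigg(\sup_{\substack{\varphi\in E(\lambda_n)\\\|\varphi\|_{L^2(\Omega)}=1}}
\frac{\|U_\e^\varphi\|_{L^2(\Omega_\e)}^2}{\|U_\e^\varphi\|_{H^1(\Omega_\e)}^2}\bigg)
\cdot\sup_{\substack{\varphi\in E(\lambda_n)\\\|\varphi\|_{L^2(\Omega)}=1}}
\|U_\e^\varphi\|_{H^1(\Omega_\e)}^2=o(1)\cdot O(\e^N)=o(\e^N),
\end{equation*}
valid precisely under the hypothesis $N\geq 3$ or ($N=2$, $\Sigma=B_1$).
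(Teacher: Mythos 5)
Your strategy for the first assertion is genuinely different from the paper's and, as written, has gaps that prevent it from covering the full generality of the statement ($N\geq 2$, arbitrary $\Sigma$ satisfying \eqref{eq:connect}). You rescale at the hole and invoke the blow-up of the torsion functions to a harmonic exterior Neumann solution, then try to estimate $\|V_\e\|_{L^2(D_\e)}^2$ via pointwise decay. The problems: (1) the blow-up analysis of \cite{FLO} is available for $N\geq 3$ (and, for $N=2$, only for $\Sigma=B_1$), so invoking it to prove assertion \eqref{eq:L2_decay_th1} for $N=2$ and general $\Sigma$ is circular/unavailable; (2) $H^1_{\rm loc}$ convergence of $V_{\e_k}$ gives control on compacta, but you need an $L^2$ bound over the \emph{expanding} domain $D_{\e_k}\subset B_{C/\e_k}$ — the claim that ``Caccioppoli-type estimates propagate the same bound uniformly to $V_{\e_k}^{\varphi_k}$ on annuli'' is the crux and is left unjustified (Caccioppoli bounds gradients locally by $L^2$ masses, it does not transfer pointwise decay of the limit to the approximants, and the outer Neumann boundary at scale $1/\e$ must also be handled); (3) the degenerate case $\nabla\varphi_\infty(x_0)=0$ is only sketched, and the renormalization you propose would require a finer multi-scale blow-up that is nontrivial.

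By contrast, the paper's proof of \eqref{eq:L2_decay_th1} is much lighter and avoids any blow-up or decay estimate: arguing by contradiction, it normalizes $U_{\e_i}^{\varphi^i}$ by its $L^2(\Omega_{\e_i})$ norm, extends to $\Omega$ by the uniform extension operator, extracts a weak $H^1(\Omega)$ limit $U$ with $\|U\|_{L^2(\Omega)}=1$ by Rellich, and then shows — testing \eqref{eq:equationUphi} with functions vanishing near $x_0$ and using that $\{x_0\}$ has null capacity — that $U$ solves $-\Delta U+U=0$ weakly in all of $\Omega$ with homogeneous Neumann conditions, forcing $U=0$. This works for any $N\geq 2$ and any admissible $\Sigma$, which is exactly the generality of assertion \eqref{eq:L2_decay_th1}. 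If you want to salvage your approach, you would at least need to restrict it to the cases where the blow-up and decay machinery is available (and then the first assertion would not be proved in full generality), and you would need to turn the ``propagation of decay'' into a genuine comparison argument. Your derivation of the ``in particular'' consequence from the first assertion and Lemma~\ref{lemma:H1_decay} is correct and coincides with the paper's.
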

	\begin{proof}
		In order to prove \eqref{eq:L2_decay_th1}, we proceed by
		contradiction. To this aim, we assume that there exist sequences
		$\{\e_i\}_i\subset(0,+\infty)$ and
		$\{\varphi^i\}_i\subset E(\lambda_n)$ such that
		$\lim_{i\to\infty}\e_i=0$, $\|\varphi^i\|_{L^2(\Omega)}=1$, and
		\begin{equation}\label{eq:L2_1}
			\frac{\|U_{\e_i}^{\varphi^i}\|_{L^2(\Omega_{\e_i})}^2}
			{\|U_{\e_i}^{\varphi^i}\|_{H^1(\Omega_{\e_i})}^2}\geq C
			\quad\text{for all }i\in\N,
		\end{equation}
		for some $C>0$. It is known (see e.g. \cite{SW99} and \cite{FSR})
		that, for every $\e\in(0,\e_0)$, there exists an extension
		operator $\mathsf{E}_\e\colon H^1(\Omega_\e)\to H^1(\Omega)$ such
		that
		\begin{equation*}
			\norm{\mathsf{E}_\e u}_{H^1(\Omega)}\leq
			\mathfrak{C}\norm{u}_{H^1(\Omega_\e)}\quad\text{for all }u\in
			H^1(\Omega_\e)\text{ and }\e>0,
		\end{equation*}
		where $\mathfrak{C}$ is a positive constant independent of $\e$.
		Therefore, letting
		\begin{equation*}
			U_i:=\frac{\mathsf{E}_{\e_i}U_{\e_i}^{\varphi^i}}
			{\|\mathsf{E}_{\e_i}U_{\e_i}^{\varphi^i}\|_{L^2(\Omega)}},
		\end{equation*}
		\eqref{eq:L2_1} implies that
		\begin{equation*}
			\norm{U_i}_{H^1(\Omega)}=O(1)\quad\text{as }i\to\infty.
		\end{equation*}
		Hence, up to extracting a subsequence, we have
		\begin{equation}\label{eq:convergences}
			U_i\weak U\quad\text{weakly in
			}H^1(\Omega)\quad\text{and}\quad
			U_i\to U\quad\text{strongly in }L^2(\Omega)\quad\text{as $i\to\infty$},
		\end{equation}
		for some $U\in H^1(\Omega)$ such that
		$\|U\|_{L^2(\Omega)}=1$.
		
		Let $v\in C^\infty(\overline{\Omega})$ be such that $v\equiv0$ in a
		neighborhood of $x_0$. Hence, for $i$
		sufficiently large, $v\equiv0$ in $\Sigma_{\e_i}$; thus, in
		view of the equation satisfied by $U_{\e_i}^{\varphi^i}$ we
		derive that
		\begin{equation*}
			\int_{\Omega}(\nabla U_i\cdot\nabla v+U_iv)\dx=
			\frac{1}{\|\mathsf{E}_{\e_i}U_{\e_i}^{\varphi^i}\|_{L^2(\Omega)}}
			\int_{\Omega_{\e_i}}(\nabla U_{\e_i}^{\varphi^i}
			\cdot\nabla v+U_{\e_i}^{\varphi^i}v)\dx=0.
		\end{equation*}
		In view of \eqref{eq:convergences}, we can pass to the limit as
		$i\to\infty$ in the above identity; then, since $\{x_0\}$ has
		null capacity, we can extend the validity to any test function
		$v\in H^1(\Omega)$, obtaining 
		\begin{equation*}
			\int_{\Omega}(\nabla U\cdot\nabla v+Uv)\dx=0
			\quad\text{for all }v\in H^1(\Omega).
		\end{equation*}
		This forces $U=0$; since $\|U\|_{L^2(\Omega)}=1$, this gives rise
		to a contradiction.
		
		The second part of the statement follows
		from \eqref{eq:L2_decay_th1} and \Cref{lemma:H1_decay}. 
	\end{proof}
	
	If $N\geq3$, the following stability estimate holds for the torsion
	functions  $U_\e^{\varphi}$.
	\begin{lemma}\label{l:stability}
		Let $N\geq3$. There exists a constant $S>0$, depending on
		$N,\Omega,\Sigma, \lambda_n$ (but independent of $\e$), such
		that, for all $\e\in(0,\e_0)$ and $\varphi\in E(\lambda_n)$,
		\begin{equation*}
			\|U_\e^{\varphi}\|_{H^1(\Omega_\e)}^2\leq
			S\left(\e\|\varphi\|_{L^2(\Sigma_\e)}
			+\|\nabla\varphi\|_{L^2(\Sigma_\e)}\right)^2.
		\end{equation*}
	\end{lemma}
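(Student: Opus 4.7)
The plan is to test the weak formulation \eqref{eq:equationUphi} with $u=U_\e^{\varphi}$ itself and then transform the resulting boundary integral into a volume integral over $\Sigma_\e$, where the smallness of the set compensates for the $L^2$-norms of the extended torsion function via the Sobolev inequality. First I would write
\begin{equation*}
\|U_\e^{\varphi}\|_{H^1(\Omega_\e)}^2=\int_{\partial\Sigma_\e}U_\e^{\varphi}\,\partial_{\nnu}\varphi\ds.
\end{equation*}
Since $\{x_0\}$ has null capacity, $\varphi\in E(\lambda_n)$ is smooth across $\Sigma_\e$ and satisfies $-\Delta\varphi+\varphi=\lambda_n\varphi$ in a neighborhood of $\overline{\Sigma_\e}$. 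Applying the uniform extension operator $\mathsf{E}_\e\colon H^1(\Omega_\e)\to H^1(\Omega)$ (with constant $\mathfrak{C}$ independent of $\e$) and integrating by parts on $\Sigma_\e$, I would obtain
\begin{equation*}
\|U_\e^{\varphi}\|_{H^1(\Omega_\e)}^2=-\int_{\Sigma_\e}\nabla(\mathsf{E}_\e U_\e^{\varphi})\cdot\nabla\varphi\dx+(\lambda_n-1)\int_{\Sigma_\e}(\mathsf{E}_\e U_\e^{\varphi})\,\varphi\dx,
\end{equation*}
taking into account that $\nnu$ in \eqref{eq:equationUphi} points outward from $\Omega_\e$, hence inward to $\Sigma_\e$.

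Next I would estimate the two terms by Cauchy--Schwarz:
\begin{equation*}
\|U_\e^{\varphi}\|_{H^1(\Omega_\e)}^2\leq \|\nabla\mathsf{E}_\e U_\e^{\varphi}\|_{L^2(\Sigma_\e)}\|\nabla\varphi\|_{L^2(\Sigma_\e)}+(\lambda_n-1)\|\mathsf{E}_\e U_\e^{\varphi}\|_{L^2(\Sigma_\e)}\|\varphi\|_{L^2(\Sigma_\e)}.
\end{equation*}
The key point, which uses $N\geq 3$, is that by H\"older's inequality and the Sobolev embedding $H^1(\Omega)\hookrightarrow L^{2N/(N-2)}(\Omega)$,
\begin{equation*}
\|\mathsf{E}_\e U_\e^{\varphi}\|_{L^2(\Sigma_\e)}\leq |\Sigma_\e|^{1/N}\|\mathsf{E}_\e U_\e^{\varphi}\|_{L^{2N/(N-2)}(\Omega)}\leq C\,\e\,\mathfrak{C}\,\|U_\e^{\varphi}\|_{H^1(\Omega_\e)},
\end{equation*}
while trivially $\|\nabla\mathsf{E}_\e U_\e^{\varphi}\|_{L^2(\Sigma_\e)}\leq\mathfrak{C}\|U_\e^{\varphi}\|_{H^1(\Omega_\e)}$.

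Substituting these into the previous inequality and dividing by $\|U_\e^{\varphi}\|_{H^1(\Omega_\e)}$ (the statement being trivial otherwise), I would conclude
\begin{equation*}
\|U_\e^{\varphi}\|_{H^1(\Omega_\e)}\leq \mathfrak{C}\|\nabla\varphi\|_{L^2(\Sigma_\e)}+C(\lambda_n-1)\mathfrak{C}\,\e\|\varphi\|_{L^2(\Sigma_\e)},
\end{equation*}
which, after squaring and absorbing constants into $S=S(N,\Omega,\Sigma,\lambda_n)$, yields the claimed bound. The main obstacle is the sharp dependence on $\e$ in the $L^2(\Sigma_\e)$ bound for the extended torsion function: this is precisely where the restriction $N\geq 3$ enters, as the critical Sobolev exponent $2^*=2N/(N-2)$ is finite and allows one to factor the volume $|\Sigma_\e|^{1/N}=\e|\Sigma|^{1/N}$ out via H\"older. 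In dimension $N=2$ the analogous argument would only give a logarithmic loss, which is consistent with the different asymptotic rate recorded in \Cref{lemma:H1_decay}.
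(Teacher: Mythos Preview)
Your argument is correct. The paper itself does not give a proof but simply cites \cite[Lemma~5.4, estimate~(5.7)]{FLO}, so a direct line-by-line comparison is not possible; however, the strategy you outline---testing \eqref{eq:equationUphi} with $U_\e^{\varphi}$, converting the surface integral on $\partial\Sigma_\e$ into a volume integral on $\Sigma_\e$ via the uniform extension and the equation $-\Delta\varphi+\varphi=\lambda_n\varphi$, then using H\"older together with the critical Sobolev embedding $H^1(\Omega)\hookrightarrow L^{2N/(N-2)}(\Omega)$ to extract the factor $|\Sigma_\e|^{1/N}=\e|\Sigma|^{1/N}$---is the natural route and is presumably the one in \cite{FLO}. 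Your identification of the role of $N\geq 3$ (finiteness of $2^*$) and of the extension operator (uniformity of $\mathfrak{C}$ in $\e$) is accurate, and the sign bookkeeping for the normal on $\partial\Sigma_\e$ is handled correctly.
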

	\begin{proof}
		The proof is contained in \cite{FLO}, see in particular estimate
		(5.7) in Lemma 5.4.
	\end{proof}

	For $N\geq3$, we can adapt the blow-up analysis carried out in
	\cite{FLO} to quantify the vanishing order of the energies of the
	torsion functions $U_\e^{\varphi}$.

	\begin{lemma}\label{lemma:blowup}
		Let $N\geq 3$. Then
		\begin{equation}\label{eq:sup}
			\rho_n(\e):=      \sup_{\substack{\varphi\in E(\lambda_n) \\
					\norm{\varphi}_{L^2(\Omega)}=1}}\abs{\e^{-N}
				\norm{U_\e^\varphi}_{H^1(\Omega_\e)}^2-\int_{\R^N\setminus\Sigma}
				|\nabla U_{\Sigma,\nabla\varphi(x_0)\cdot \nnu}|^2\dx}\to
			0\quad\text{as }\e\to 0.
		\end{equation}
	\end{lemma}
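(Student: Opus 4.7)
The plan is to reduce the uniform estimate to the single-function blow-up already available in \cite{FLO} by combining finite dimensionality of $E(\lambda_n)$ with the stability estimate of \Cref{l:stability}. I would argue by contradiction: suppose the supremum in \eqref{eq:sup} does not tend to zero; then there exist $\e_i\to0$ and $\varphi^i\in E(\lambda_n)$ with $\|\varphi^i\|_{L^2(\Omega)}=1$ realising a uniform lower bound
\begin{equation*}
\Big|\e_i^{-N}\|U_{\e_i}^{\varphi^i}\|_{H^1(\Omega_{\e_i})}^2-\int_{\R^N\setminus\Sigma}|\nabla U_{\Sigma,\nabla\varphi^i(x_0)\cdot\nnu}|^2\dx\Big|\ge c>0.
\end{equation*}
Since $E(\lambda_n)$ is finite dimensional, up to a subsequence $\varphi^i\to\varphi$ in $E(\lambda_n)$, with $\|\varphi\|_{L^2(\Omega)}=1$. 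Interior elliptic regularity makes every element of $E(\lambda_n)$ smooth in a fixed neighbourhood of $x_0$, and the equivalence of norms on the finite-dimensional space $E(\lambda_n)$ then yields $\varphi^i\to\varphi$ in $C^1$ on that neighbourhood; in particular $\nabla\varphi^i(x_0)\to\nabla\varphi(x_0)$.

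Next I would exploit the linearity of the map $\varphi\mapsto U_\e^\varphi$ (clear from \eqref{eq:equationUphi}) to write $U_{\e_i}^{\varphi^i}-U_{\e_i}^{\varphi}=U_{\e_i}^{\varphi^i-\varphi}$, and apply \Cref{l:stability}:
\begin{equation*}
\|U_{\e_i}^{\varphi^i-\varphi}\|_{H^1(\Omega_{\e_i})}^2\le S\bigl(\e_i\|\varphi^i-\varphi\|_{L^2(\Sigma_{\e_i})}+\|\nabla(\varphi^i-\varphi)\|_{L^2(\Sigma_{\e_i})}\bigr)^2.
\end{equation*}
Since $|\Sigma_{\e_i}|=|\Sigma|\e_i^N$ and $\varphi^i-\varphi\to 0$ in $C^1$ near $x_0$, the right-hand side is $o(\e_i^N)$. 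Combined with the single-function blow-up of \cite{FLO} (namely $\e^{-N}\|U_\e^{\varphi}\|_{H^1(\Omega_\e)}^2\to\int_{\R^N\setminus\Sigma}|\nabla U_{\Sigma,\nabla\varphi(x_0)\cdot\nnu}|^2\dx$ applied to the fixed $\varphi$), a triangle inequality gives $\e_i^{-N}\|U_{\e_i}^{\varphi^i}\|_{H^1(\Omega_{\e_i})}^2\to \int_{\R^N\setminus\Sigma}|\nabla U_{\Sigma,\nabla\varphi(x_0)\cdot\nnu}|^2\dx$.

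To close the contradiction I would use continuous dependence of the limit term on $\varphi$: the map $\varphi\mapsto U_{\Sigma,\nabla\varphi(x_0)\cdot\nnu}$ is linear and continuous from $E(\lambda_n)$ to $\mathcal{D}^{1,2}(\R^N\setminus\Sigma)$ (by the variational characterisation of $U_{\Sigma,f}$), so $\int|\nabla U_{\Sigma,\nabla\varphi^i(x_0)\cdot\nnu}|^2\dx\to\int|\nabla U_{\Sigma,\nabla\varphi(x_0)\cdot\nnu}|^2\dx$, contradicting the lower bound above. The main obstacle I foresee is verifying the $o(\e_i^N)$ control on $\|U_{\e_i}^{\varphi^i-\varphi}\|_{H^1(\Omega_{\e_i})}^2$: it hinges on reading the stability estimate of \Cref{l:stability} together with the $C^1$ convergence of the sequence $\varphi^i$ in a neighbourhood of the interior point $x_0$, which is ultimately a consequence of interior elliptic regularity and of the finite dimensionality of $E(\lambda_n)$.
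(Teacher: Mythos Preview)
Your proposal is correct and follows essentially the same approach as the paper: extract a convergent subsequence by finite dimensionality of $E(\lambda_n)$, upgrade to $C^1$-convergence near $x_0$ via interior elliptic regularity, use the stability estimate of \Cref{l:stability} together with linearity of $\varphi\mapsto U_\e^\varphi$ to show the $\e^{-N}$-rescaled energies are stable under $\varphi^i\to\varphi$, and invoke the single-function blow-up from \cite{FLO}. The only cosmetic differences are that the paper attains the supremum at each $\e$ and concludes via Urysohn's subsequence principle rather than by contradiction, and it writes the difference of squares via the inner product $(U_\e^{\varphi^i-\varphi},U_\e^{\varphi^i+\varphi})_{H^1}$ rather than the triangle inequality on norms.
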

	\begin{proof}
		By continuity and compactness of the set
		$\{\varphi\in E(\lambda_n):\|\varphi\|_{L^2(\Omega)}=1\}$, for every
		$\e\in(0,\e_0)$ the supremum in \eqref{eq:sup} is achieved by some
		$\varphi_\e\in E(\lambda_n)$ with
		$\|\varphi_\e\|_{L^2(\Omega)}=1$. Since
		$\|\varphi_\e\|_{H^1(\Omega)}^2=\lambda_n$, for every sequence
		$\e_j\to0$, there exist a subsequence, still denoted as
		$\{\e_j\}_j$, and some $\varphi\in E(\lambda_n)$ such that
		$\|\varphi\|_{L^2(\Omega)}=1$ and $\varphi_{\e_j}\to \varphi$ in
		$L^2(\Omega)$ as $j\to+\infty$. By classical elliptic interior
		estimates, we deduce that $\varphi_{\e_j}\to \varphi$ in
		$C^1_{\rm loc}(\Omega)$.  Hence, by the linearity of
		$\varphi\mapsto U_\varepsilon^\varphi$, the Cauchy-Schwarz
		inequality, and Lemma \ref{l:stability} we have
		\begin{align}\label{eq:st1}
			&\left|\|U_{\e_j}^{\varphi_{\e_j}}\|_{H^1(\Omega_{\e_j})}^2-
			\|U_{\e_j}^{\varphi}\|_{H^1(\Omega_{\e_j})}^2\right|
			=\left|(U_{\e_j}^{\varphi_{\e_j}}-U_{\e_j}^{\varphi},
			U_{\e_j}^{\varphi_{\e_j}}+U_{\e_j}^{\varphi})_{H^1(\Omega_{\e_j})}\right|\\
			&\notag  =\left|(U_{\e_j}^{\varphi_{\e_j}-\varphi},
			U_{\e_j}^{\varphi_{\e_j}+\varphi})_{H^1(\Omega_{\e_j})}\right|
			\leq \|U_{\e_j}^{\varphi_{\e_j}-\varphi}\|_{H^1(\Omega_{\e_j})}
			\|U_{\e_j}^{\varphi_{\e_j}+\varphi}\|_{H^1(\Omega_{\e_j})}\\
			&\notag\leq
			S\left(\e_j\|\varphi_{\e_j}-\varphi\|_{L^2(\Sigma_\e)}+
			\|\nabla(\varphi_{\e_j}-\varphi)\|_{L^2(\Sigma_\e)}\right)
			\left(\e_j\|\varphi_{\e_j}+\varphi\|_{L^2(\Sigma_\e)}+\|\nabla(\varphi_{\e_j}+\varphi)\|_{L^2(\Sigma_\e)}\right)\\
			&\notag=o(\e_j^N)\quad\text{as }j\to+\infty.
		\end{align}
		In a similar way,
		\begin{multline}\label{eq:st2}
			\left|  \|\nabla U_{\Sigma,\nabla\varphi_{\e_j}(x_0)\cdot
				\nnu}\|_{L^2(\R^N\setminus\Sigma)}^2-
			\|\nabla U_{\Sigma,\nabla\varphi(x_0)\cdot
				\nnu}\|_{L^2(\R^N\setminus\Sigma)}^2\right|\\\leq
			\|\nabla U_{\Sigma,\nabla(\varphi_{\e_j}-\varphi)(x_0)\cdot
				\nnu}\|_{L^2(\R^N\setminus\Sigma)}
			\|\nabla U_{\Sigma,\nabla(\varphi_{\e_j}+\varphi)(x_0)\cdot
				\nnu}\|_{L^2(\R^N\setminus\Sigma)}
			=o(1)\quad\text{as }j\to+\infty.
		\end{multline}
		Furthermore, from  \cite[Theorem 5.6]{FLO} it is known that 
		\begin{equation}\label{eq:st3}
			\e^{-N} \norm{U_\e^{\varphi}}_{H^1(\Omega_\e)}^2\to
			\|\nabla U_{\Sigma,\nabla\varphi(x_0)\cdot
				\nnu}\|_{L^2(\R^N\setminus\Sigma)}^2\quad\text{as }\e\to 0.
		\end{equation}
		From \eqref{eq:st1}, \eqref{eq:st2}, and \eqref{eq:st3} it follows that
		\begin{align*}
			\rho_n(\e_j)&=  \bigg|\e_j^{-N}
			\left(\|U_{\e_j}^{\varphi_{\e_j}}\|_{H^1(\Omega_{\e_j})}^2-
			\|U_{\e_j}^{\varphi}\|_{H^1(\Omega_{\e_j})}^2\right)\\
			&\quad-
			\left(\|\nabla U_{\Sigma,\nabla\varphi_{\e_j}(x_0)\cdot
				\nnu}\|_{L^2(\R^N\setminus\Sigma)}^2-
			\|\nabla U_{\Sigma,\nabla\varphi(x_0)\cdot
				\nnu}\|_{L^2(\R^N\setminus\Sigma)}^2\right)\\
			&\quad +\bigg(\e_j^{-N} \|U_{\e_j}^\varphi\|_{H^1(\Omega_{\e_j})}^2-
			\|\nabla U_{\Sigma,\nabla\varphi(x_0)\cdot
				\nnu}\|_{L^2(\R^N\setminus\Sigma)}^2\bigg)\bigg|
			=o(1)\quad\text{as }j\to+\infty.
		\end{align*}
		The conclusion then follows by Urysohn's subsequence principle.
	\end{proof}
	
	\section{Asymptotics of multiple eigenvalues}\label{sec:asympt-mult-eigenv}
	
	\noindent
	In the whole section, we consider a fixed eigenvalue
	$\lambda_n=\lambda_n(\Omega)$ of problem \eqref{eq:unperturbed} with
	multiplicity $m:=\dim E(\lambda_n)$, where the index $n\in\N$ is
	chosen as in \eqref{eq:scelta-n}.
	
	For every $\e\in(0,\e_0)$, we
	introduce the bilinear form
	\begin{align}\label{eq:choices1}
		&q_\e: H^1(\Omega_\e)\times H^1(\Omega_\e)\to\R,\\
		&\notag 
		q_\e (u,v)
		:=\int_{\Omega_\e}(\nabla u\cdot\nabla
		v+uv)\dx-\lambda_n\int_{\Omega_\e}uv\dx. 
	\end{align}
	For every $\varphi\in E(\lambda_n)$ and $\e\in(0,\e_0)$, we denote
	\begin{equation*}
		U_\e^\varphi:=U_{\Omega,\Sigma_\e,\partial_{\nnu}\varphi}, \quad
		P_\e(\varphi):=\varphi-U_\e^\varphi,
	\end{equation*}
	and
	\begin{equation}\label{eq:choices2}
		F_\e:=\{P_\e(\varphi)\colon\varphi\in E(\lambda_n)\}
		=P_\e(E(\lambda_n))\sub L^2(\Omega_\e).
	\end{equation}
	Treating $E(\lambda_n)$ as a subspace of
	$L^2(\Omega)$ and defining $\omega(\e)$ as in Lemma
	\ref{lemma:L2_decay}, for every $\e\in(0,\e_0)$
	we have
	\begin{equation*}
		\norm{I-P_\e}_{\mathcal{L}(E(\lambda_n);L^2(\Omega_\e))}^2=\omega(\e),
	\end{equation*}
	where $I$ is the identity operator. If  either $N\geq 2$ and
	$\Sigma=B_1$ or $N\geq 3$, by Lemma \ref{lemma:L2_decay}
	$\lim_{\e\to0}\omega(\e)=0$, hence $P_\e:E(\lambda_n)\to F_\e$
	is a bijection for $\e$ sufficiently small and $F_\e$ is
	$m$-dimensional.
	
	The following \emph{Lemma on small eigenvalues} by Y. Colin de
	Verdière \cite{ColindeV1986} (see also \cite[Proposition B.1]{ALM2022})
	is a crucial tool in the proof of Theorem \ref{thm:1}.
	\begin{lemma}[Lemma on small eigenvalues \cite{ColindeV1986}]\label{lemma:CdV}
		Let $(\mathcal{H},(\cdot,\cdot)_{\mathcal{H}})$ be a real
		Hilbert space and let $\mathcal{D}\sub\mathcal{H}$ be a dense
		subspace. Let $q\colon \mathcal{D}\times\mathcal{D}\to\R$ be a
		symmetric bilinear form, such that $q$ is semi-bounded from below,
		$q$ admits an increasing sequence of eigenvalues
		$\{\nu_i\}_{i\in\N}$, 
		and there exists an orthonormal basis $\{g_i\}_{i\in\N}$ of
		$\mathcal H$ such that $g_i\in\mathcal D$ is an eigenvector of $q$
		associated to the eigenvalue $\nu_i$, i.e.  $q(g_i,v)=\nu_i(g_i,v)_{\mathcal{H}}$
		for all $i\geq1$ and $v\in \mathcal D$.
		
		Let $m\in \N\setminus\{0\}$ and $F\sub \mathcal{D}$ be
		a $m$-dimensional subspace of $\mathcal D$.
		
		Assume that there exist $n\in\N$ and
		$\gamma>0$ such that
		\begin{itemize}
			\item[\rm (H1)] $\nu_i\leq -\gamma$ for all $i\leq n-1$,
			$|\nu_i|\leq \gamma$ for all $i=n,\dots,n+m-1$, and 
			$\nu_i\geq \gamma$ for all $i\geq n+m$;
			\item[\rm (H2)]
			$\delta:=\sup\{|q(u,v)|\colon u\in \mathcal{D},~v\in
			F,~\norm{u}_{\mathcal{H}}=\norm{v}_{\mathcal{H}}=1\}<\gamma/\sqrt{2}$.
		\end{itemize}
		If $\{\xi_i\}_{i=1,\dots,m}$ are the eigenvalues (in
		ascending order) of $q$ restricted to $F$, we have
		\begin{equation}\label{eq:CdV_th1}
			|\nu_{n+i-1}-\xi_i|\leq\frac{4\delta^2}{\gamma}
			\quad\text{for all }i=1,\dots,m.
		\end{equation}
		Moreover, if
		$\Pi\colon\mathcal{D}\to
		\mathrm{span}\,\{g_n,\dots,g_{n+m-1}\}$ denotes the
		orthogonal projection onto the subspace of
		$\mathcal D$  spanned by $\{g_n,\dots,g_{n+m-1}\}$,
		we have
		\begin{equation*}
			\frac{\norm{v-\Pi
					v}_{\mathcal{H}}}{\norm{v}_{\mathcal{H}}}
			\leq \frac{\sqrt{2}\delta}{\gamma}\quad\text{for all }v\in F.
		\end{equation*}
	\end{lemma}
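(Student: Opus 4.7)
My plan is to establish the projection estimate first and then deduce the eigenvalue bound via min--max. Fix $v\in F$ with $\norm{v}_{\mathcal{H}}=1$ and expand $v=\sum_{i}c_i g_i$ in the orthonormal basis of eigenvectors. Split this decomposition into three pieces
\begin{equation*}
v_-:=\sum_{i\leq n-1}c_i g_i,\qquad v_0:=\Pi v=\sum_{i=n}^{n+m-1}c_i g_i,\qquad v_+:=\sum_{i\geq n+m}c_i g_i,
\end{equation*}
so that $v-\Pi v=v_-+v_+$ and, thanks to the diagonal action of $q$ in this basis, $q(v_-,v_0)=q(v_0,v_+)=q(v_-,v_+)=0$.

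For the projection estimate I would introduce the sign-flipped test vector $w:=v_+-v_-$, which lies in $\mathcal{D}$ and satisfies $\norm{w}_{\mathcal{H}}=\norm{v-\Pi v}_{\mathcal{H}}$. By orthogonality one gets $q(w,v)=q(v_+,v_+)-q(v_-,v_-)$, and hypothesis (H1) bounds this from below by
\begin{equation*}
q(w,v)\geq\sum_{i\geq n+m}\gamma\, c_i^2+\sum_{i\leq n-1}\gamma\, c_i^2=\gamma\,\norm{v-\Pi v}_{\mathcal{H}}^2.
\end{equation*}
On the other hand, (H2) applied to the pair $(w,v)$ yields $|q(w,v)|\leq\delta\,\norm{w}_{\mathcal{H}}\norm{v}_{\mathcal{H}}=\delta\,\norm{v-\Pi v}_{\mathcal{H}}$. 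Combining the two inequalities gives $\norm{v-\Pi v}_{\mathcal{H}}\leq\delta/\gamma$, which in particular implies the stated weaker bound with constant $\sqrt{2}$.

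For the eigenvalue estimate I would exploit the fact that, since $\delta<\gamma/\sqrt{2}$, the projection $\Pi|_F$ is a near-isometry: indeed $\norm{\Pi v}_{\mathcal{H}}^2\geq 1-2\delta^2/\gamma^2>0$, hence $\Pi|_F\colon F\to\mathrm{span}\{g_n,\dots,g_{n+m-1}\}$ is an injection between two $m$-dimensional spaces, and therefore a bijection. For any $v\in F$, the decomposition $v=\Pi v+(v-\Pi v)$ together with the orthogonality noted above gives $q(v,v)=q(\Pi v,\Pi v)+q(v-\Pi v,v-\Pi v)$, where the second summand is of order $\delta^2$. I would then compare the Courant--Fischer min--max characterizations of the eigenvalues $\xi_i$ of $q|_F$ and of $\nu_{n+i-1}$ (viewed as the eigenvalues of $q$ restricted to $\mathrm{span}\{g_n,\dots,g_{n+m-1}\}$), transporting test subspaces via $\Pi|_F$ and its inverse. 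Each one-sided comparison produces a bound of the form $|\nu_{n+i-1}-\xi_i|\leq C\,\delta^2/\gamma$.

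The main obstacle I foresee is securing the quadratic order $\delta^2/\gamma$ in the eigenvalue bound, rather than the naive $\delta$ that a brute-force estimate would give. This quadratic gain reflects the stationarity of the Rayleigh quotient at an eigenvector: a first-order perturbation of the test subspace yields only a second-order variation of the eigenvalues. Capturing it requires the simultaneous control of the numerator (via the splitting of $q(v,v)$) and of the denominator (via the near-isometry of $\Pi|_F$, which perturbs $\norm{v}_{\mathcal{H}}^2$ by $O(\delta^2/\gamma^2)$) in the Rayleigh quotient, and the resulting constants $4$ and $\sqrt{2}$ in \eqref{eq:CdV_th1} emerge from a routine but careful bookkeeping of the two one-sided min--max inequalities.
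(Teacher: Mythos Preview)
The paper does not prove this lemma: it is stated as a known result from \cite{ColindeV1986} (see also \cite[Proposition B.1]{ALM2022}) and then applied. So there is no ``paper's own proof'' to compare against.

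That said, your approach is sound and is essentially the standard argument. Your proof of the projection estimate is complete and in fact yields the sharper bound $\norm{v-\Pi v}_{\mathcal H}/\norm{v}_{\mathcal H}\leq \delta/\gamma$; the $\sqrt{2}$ in the statement is not needed at this step. For the eigenvalue bound, your outline via the bijectivity of $\Pi|_F$ and the splitting $q(v,v)=q(\Pi v,\Pi v)+q(v-\Pi v,v-\Pi v)$ is the right strategy, and transporting min--max test subspaces through $\Pi|_F$ and its inverse does produce a two-sided bound of order $\delta^2/\gamma$. The only point you leave genuinely unfinished is the bookkeeping that pins down the constant~$4$: one must control both the remainder $q(v-\Pi v,v-\Pi v)$ (bounded via $|q(v-\Pi v,v)|\leq \delta\norm{v-\Pi v}_{\mathcal H}$ together with $q(\Pi v,v-\Pi v)=0$) and the denominator distortion $\norm{\Pi v}_{\mathcal H}^2/\norm{v}_{\mathcal H}^2\in[1-\delta^2/\gamma^2,1]$, and then combine the two one-sided min--max comparisons. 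This is routine, as you say, but if you want to match the stated constant you should write it out explicitly.
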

	
	We are going to apply \Cref{lemma:CdV} in our setting, in order to
	prove the following expansion for the perturbed eigenvalues.
	\begin{proposition}\label{prop:CdV}
		Let either $N\geq 2$ and $\Sigma=B_1$ or $N\geq 3$. Let
		$\{\xi_i^\e\}_{i=1,\dots,m}$ be the eigenvalues of the bilinear form
		$q_\e$ defined in \eqref{eq:choices1}, restricted
		to the space $F_\e$ introduced in \eqref{eq:choices2}. Then
		\begin{equation}\label{eq:appl_CdV_th1}
			\lambda_{n+i-1}^\e=\lambda_n+\xi_i^\e+o(\e^N)\quad\text{as }\e\to 0,
		\end{equation}
		for all $i=1,\dots,m$. Moreover,
		\begin{equation}\label{eq:appl_CdV_th2}
			\sup_{\substack{\varphi\in
					E(\lambda_n)\\\|\varphi\|_{L^2(\Omega)}=1}}
			\norm{P_\e(\varphi)-\Pi_\e^n P_\e(\varphi)}_{L^2(\Omega_\e)}^2
			=o(\e^N)\quad\text{as }\e\to 0,
		\end{equation}
		where $\Pi_\e^n$ is as in \Cref{thm:eigenfunctions}.
	\end{proposition}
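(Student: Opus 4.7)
The plan is to apply \Cref{lemma:CdV} with $\mathcal{H}=L^2(\Omega_\e)$, $\mathcal{D}=H^1(\Omega_\e)$, $q=q_\e$, and $\{g_i\}$ the $L^2(\Omega_\e)$-normalized Neumann eigenfunctions of \eqref{eq:perturbed}; then $q_\e(g_i,v)=(\lambda_i^\e-\lambda_n)(g_i,v)_{L^2(\Omega_\e)}$ for every $v\in H^1(\Omega_\e)$, so $\nu_i=\lambda_i^\e-\lambda_n$. For the finite-dimensional subspace I take $F=F_\e$, which is $m$-dimensional for small $\e$ by the discussion preceding the proposition. Choosing
\begin{equation*}
\gamma:=\tfrac12\min\{\lambda_n-\lambda_{n-1},\,\lambda_{n+m}-\lambda_n\}>0,
\end{equation*}
hypothesis (H1) follows at once from the spectral convergence \eqref{eq:conveig}, applied to the finitely many indices $i\le n+m$, provided $\e$ is sufficiently small.

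The decisive point is to verify (H2) with a quantitative rate. Fix $\varphi\in E(\lambda_n)$ with $\|\varphi\|_{L^2(\Omega)}=1$ and set $v:=P_\e(\varphi)=\varphi-U_\e^\varphi$. From $-\Delta\varphi=(\lambda_n-1)\varphi$ in $\Omega$ together with the strong form of \eqref{eq:equationUphi}, namely $-\Delta U_\e^\varphi+U_\e^\varphi=0$ in $\Omega_\e$ with $\partial_\nnu U_\e^\varphi=\partial_\nnu\varphi$ on $\partial\Sigma_\e$ and $\partial_\nnu U_\e^\varphi=0$ on $\partial\Omega$, one obtains $\partial_\nnu v=0$ on $\partial\Omega_\e$ and $-\Delta v=(\lambda_n-1)\varphi+U_\e^\varphi$ in $\Omega_\e$. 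Integrating by parts (all boundary terms vanish) gives, for every $u\in H^1(\Omega_\e)$,
\begin{align*}
q_\e(u,v)&=\int_{\Omega_\e}\nabla u\cdot\nabla v\,\dx+(1-\lambda_n)\int_{\Omega_\e}uv\,\dx\\
&=-\int_{\Omega_\e}u\,\Delta v\,\dx+(1-\lambda_n)\int_{\Omega_\e}uv\,\dx=\lambda_n\int_{\Omega_\e}u\,U_\e^\varphi\,\dx,
\end{align*}
after cancellation of the $\varphi$-terms. The collapse of $q_\e(u,v)$ into a pure $L^2$-pairing against the small function $U_\e^\varphi$ is the heart of the argument.

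From this identity, Cauchy--Schwarz and \Cref{lemma:L2_decay} yield $|q_\e(u,P_\e(\varphi))|\le\lambda_n\|u\|_{L^2(\Omega_\e)}\sqrt{\omega(\e)}$ uniformly in $\varphi$ with $\|\varphi\|_{L^2(\Omega)}=1$, while $\|U_\e^\varphi\|_{L^2(\Omega_\e)}=o(1)$ also gives $\|P_\e(\varphi)\|_{L^2(\Omega_\e)}=1+o(1)$ uniformly. Hence $\delta_\e^2=O(\omega(\e))=o(\e^N)$, which in particular ensures (H2) for small $\e$. \Cref{lemma:CdV} then produces $|\lambda_{n+i-1}^\e-\lambda_n-\xi_i^\e|\le 4\delta_\e^2/\gamma=o(\e^N)$, which is \eqref{eq:appl_CdV_th1}, and its projection estimate provides, uniformly in $\varphi$,
\begin{equation*}
\|P_\e(\varphi)-\Pi_\e^n P_\e(\varphi)\|_{L^2(\Omega_\e)}^2\le\tfrac{2\delta_\e^2}{\gamma^2}\|P_\e(\varphi)\|_{L^2(\Omega_\e)}^2=o(\e^N),
\end{equation*}
which is \eqref{eq:appl_CdV_th2}.

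The genuine obstacle is the identity $q_\e(u,P_\e(\varphi))=\lambda_n\int u\,U_\e^\varphi$: a direct $H^1\!\times\!H^1$ bound on $q_\e$ would only give $\delta_\e=O(1)$ and be useless for (H2). What makes it work is that the correction $U_\e^\varphi$ appearing in $P_\e(\varphi)$ is designed, via the boundary condition $\partial_\nnu U_\e^\varphi=\partial_\nnu\varphi$ in \eqref{eq:equationUphi}, precisely to kill the boundary contributions on $\partial\Sigma_\e$ generated by integration by parts; what remains is the $L^2$-pairing against $U_\e^\varphi$, whose negligibility with respect to $\e^{N/2}$ is granted by \Cref{lemma:L2_decay}.
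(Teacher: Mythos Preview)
Your proof is correct and follows essentially the same approach as the paper: apply \Cref{lemma:CdV} with the indicated choices, verify (H1) via \eqref{eq:conveig}, and derive the key identity $q_\e(u,P_\e(\varphi))=\lambda_n\int_{\Omega_\e}u\,U_\e^\varphi\dx$ so that $\delta\le\lambda_n\sqrt{\omega(\e)}$ (up to a harmless $1+o(1)$ normalization factor), whence both conclusions follow from \Cref{lemma:L2_decay}. The only cosmetic difference is that you obtain the identity by passing through the strong form of the equations for $\varphi$ and $U_\e^\varphi$ and noting $\partial_{\nnu}(\varphi-U_\e^\varphi)=0$ on $\partial\Omega_\e$, whereas the paper arrives at the same identity directly from the weak formulation \eqref{eq:equationUphi}; on Lipschitz domains the latter route is slightly cleaner, but the computation is the same.
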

	\begin{proof}
		In order  to apply \Cref{lemma:CdV} in our setting, we choose:
		\begin{equation*}
			\mathcal{H}:=L^2(\Omega_\e),\quad\mathcal{D}:=H^1(\Omega_\e),
			\quad q:=q_\e,\quad 
			F:=F_\e,
		\end{equation*}
		being $q_\e$ and $F_\e$ as in \eqref{eq:choices1} and
		\eqref{eq:choices2} respectively, so that
		\begin{equation*}
			\nu_i=\lambda_i^\e-\lambda_n\quad\text{for all }i\in\N.
		\end{equation*}
		Let us verify that the hypotheses of Lemma \ref{lemma:CdV}
		are satisfied with the choices made above.  By
		\eqref{eq:conveig} $\lambda_i^\e\to\lambda_i$ as $\e\to 0$, for
		all $i\in\N$; hence
		\begin{align*}
			&\nu_i=\lambda_i^\e-\lambda_n\leq
			\frac{1}{2}(\lambda_{n-1}-\lambda_n)<0\quad\text{for every }i<n, \\
			&\nu_i=\lambda_i^\e-\lambda_n\geq
			\frac{1}{2}(\lambda_{n+m}-\lambda_{n+m-1})>0
			\quad\text{for every }i>n+m-1,
		\end{align*}
		for $\e$ sufficiently small (depending on $n$). Therefore,
		assumption (H1) of Lemma \ref{lemma:CdV} is satisfied with
		\begin{equation}\label{eq:gamma}
			\gamma:=\frac{1}{2}\min\{\lambda_n-\lambda_{n-1},
			\lambda_{n+m}-\lambda_n\}>0
		\end{equation}
		and $\e$ sufficiently small.

		In order to estimate the quantity $\delta$ in assumption (H2)
		of Lemma \ref{lemma:CdV}, we first fix $u\in H^1(\Omega_\e)$
		and $v\in F_\e$ and compute $q_\e(u,v)$. Being $P_\e$ a bijection,
		there exists a unique $\varphi\in E(\lambda_n)$ such
		that $v=\varphi-U_\e^\varphi$, so that
		\begin{equation*}
			q_\e(u,v)=q_\e(u,\varphi-U_\e^\varphi)=\int_{\Omega_\e}
			(\nabla u\cdot\nabla
			(\varphi-U_\e^\varphi)+u(\varphi-U_\e^\varphi))\dx-
			\lambda_n\int_{\Omega_\e}u(\varphi-U_\e^\varphi)\dx.
		\end{equation*}
		By integrating by parts and taking into account the equation
		satisfied by $U_\e^\varphi$, i.e. \eqref{eq:equationUphi}, we
		obtain that
		\begin{equation*}
			q_\e(u,\varphi-U_\e^\varphi)=\lambda_n\int_{\Omega_\e}u U_\e^\varphi\dx,
		\end{equation*}
		so that, by the Cauchy-Schwarz inequality,
		\begin{equation}\label{eq:CdV_1}
			\delta\leq \lambda_n\sqrt{\omega(\e)},
		\end{equation}
		where $\omega(\e)$ is as in \Cref{lemma:L2_decay}. Hence, from
		\Cref{lemma:L2_decay} itself, (H2) is satisfied for $\e$
		sufficiently small, and expansion \eqref{eq:appl_CdV_th1} follows
		from \eqref{eq:CdV_th1}.
		
		In order to prove \eqref{eq:appl_CdV_th2}, we first observe that,
		by \Cref{lemma:CdV} and \eqref{eq:CdV_1}, for every
		$\varphi\in E(\lambda_n)$ such that
		$\norm{\varphi}_{L^2(\Omega)}=1$ we have
		\begin{equation}\label{eq:prstpe}
			\norm{P_\e(\varphi)-\Pi_\e^n
				P_\e(\varphi)}_{L^2(\Omega_\e)}^2\leq
			\frac{2\lambda_n^2}{\gamma^2}\omega(\e)
			\|P_\e(\varphi)\|_{L^2(\Omega_\e)}^2,
		\end{equation}
		with $\gamma$ and $\omega(\e)$ being as in
		\eqref{eq:gamma} and \Cref{lemma:L2_decay} respectively. 
		Let us now analyze the rightmost
		term: we have
		\begin{align}\label{eq:CdV_2}
			\norm{P_\e(\varphi)}_{L^2(\Omega_\e)}^2&\leq 
			2\norm{\varphi}_{L^2(\Omega_\e)}^2+2\norm{(P_\e-I)(\varphi)}_{L^2(\Omega_\e)}^2\\
			\notag &\leq  2\norm{\varphi}_{L^2(\Omega)}^2+2\omega(\e)\|\varphi\|_{L^2(\Omega)}^2 =2(1+\omega(\e))
		\end{align}
		for all $\varphi\in E(\lambda_n)$ with
		$\|\varphi\|_{L^2(\Omega)}=1$. The conclusion follows from
		\eqref{eq:prstpe}, \eqref{eq:CdV_2}, and \Cref{lemma:L2_decay}.
	\end{proof}
	
	In order to characterize the eigenvalues $\{\xi_i^\e\}_{i=1,\dots,m}$ that come
	into play in Proposition \ref{prop:CdV}, we introduce the bilinear
	form $r_\e\colon E(\lambda_n)\times E(\lambda_n) \to \R$ defined as
	\begin{equation}\label{eq:def-r-eps}
		r_\e(\varphi,\psi):=q_\e(P_\e(\varphi),P_\e(\psi)).
	\end{equation}
	\begin{lemma}\label{lemma:r_e}
		We have 
		\begin{align*}
			r_\e(\varphi,\psi)=
			&-\int_{\Sigma_\e}(\nabla\varphi\cdot\nabla\psi-(\lambda_n-1)
			\varphi\psi)\dx\\
			&-\int_{\Omega_\e}(\nabla U_\e^\varphi\cdot\nabla
			U_\e^\psi
			+U_\e^\varphi U_\e^\psi)\dx-\lambda_n\int_{\Omega_\e}U_\e^\varphi U_\e^\psi\dx
		\end{align*}
		for all $\varphi,\psi\in E(\lambda_n)$.
	\end{lemma}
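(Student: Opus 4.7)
The plan is to expand $r_\e(\varphi,\psi)=q_\e(P_\e(\varphi),P_\e(\psi))$ by bilinearity into four pieces and reduce each one to a quantity already appearing in the target formula. Writing $P_\e(\varphi)=\varphi-U_\e^\varphi$ (and analogously for $\psi$), bilinearity gives
\begin{equation*}
r_\e(\varphi,\psi)=q_\e(\varphi,\psi)-q_\e(\varphi,U_\e^\psi)-q_\e(U_\e^\varphi,\psi)+q_\e(U_\e^\varphi,U_\e^\psi).
\end{equation*}

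The key computational observation is the following identity, valid for every $u\in H^1(\Omega_\e)$ and every $\varphi\in E(\lambda_n)$: integrating $\int_{\Omega_\e}\nabla\varphi\cdot\nabla u\dx$ by parts, exploiting $-\Delta\varphi=(\lambda_n-1)\varphi$ in $\Omega$ together with the homogeneous Neumann condition on $\partial\Omega$, one obtains
\begin{equation*}
q_\e(\varphi,u)=\int_{\partial\Sigma_\e}u\,\partial_{\nnu}\varphi\ds.
\end{equation*}
Applying this with $u=\psi$ and performing a further integration by parts on $\Sigma_\e$, the boundary integral can be rewritten as $-\int_{\Sigma_\e}(\nabla\varphi\cdot\nabla\psi-(\lambda_n-1)\varphi\psi)\dx$, which is precisely the first summand in the statement. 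Applying the same identity with $u=U_\e^\psi$, and symmetrically with $u=U_\e^\varphi$ (after swapping the roles of $\varphi$ and $\psi$), the two cross terms become $\int_{\partial\Sigma_\e}U_\e^\psi\,\partial_{\nnu}\varphi\ds$ and $\int_{\partial\Sigma_\e}U_\e^\varphi\,\partial_{\nnu}\psi\ds$.

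For the last piece, I would invoke the weak formulation \eqref{eq:equationUphi} of the torsion problem with test function $U_\e^\psi$, which collapses the $H^1$ inner product $\int_{\Omega_\e}(\nabla U_\e^\varphi\cdot\nabla U_\e^\psi+U_\e^\varphi U_\e^\psi)\dx$ to the boundary integral $\int_{\partial\Sigma_\e}U_\e^\psi\,\partial_{\nnu}\varphi\ds$; hence $q_\e(U_\e^\varphi,U_\e^\psi)$ equals that boundary integral minus $\lambda_n\int_{\Omega_\e}U_\e^\varphi U_\e^\psi\dx$. In the resulting sum of four contributions, one pair of $\partial\Sigma_\e$-integrals cancels, and the lone remaining boundary integral $-\int_{\partial\Sigma_\e}U_\e^\varphi\,\partial_{\nnu}\psi\ds$ is rewritten, via \eqref{eq:equationUphi} applied now to $U_\e^\psi$ with test $U_\e^\varphi$, as $-\int_{\Omega_\e}(\nabla U_\e^\varphi\cdot\nabla U_\e^\psi+U_\e^\varphi U_\e^\psi)\dx$. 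Together with the $-\lambda_n\int_{\Omega_\e}U_\e^\varphi U_\e^\psi\dx$ term already produced, this reconstitutes the remaining two summands in the claim.

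I do not anticipate any substantive obstacle: every step reduces to an integration by parts or to the invocation of an already-established weak formulation. The only point requiring care is the bookkeeping of the normal direction on $\partial\Sigma_\e$, where $\nnu$ (being the outer normal to $\Omega_\e$) is the inward normal to $\Sigma_\e$; the boundary trace $\partial_{\nnu}\varphi|_{\partial\Sigma_\e}$ is well defined thanks to the interior smoothness of eigenfunctions, since $\overline{\Sigma_\e}\subset\Omega$.
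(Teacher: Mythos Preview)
Your proposal is correct and follows essentially the same approach as the paper: expand $r_\e$ by bilinearity into four pieces, then reduce each via integration by parts together with the eigenvalue equation for $\varphi,\psi$ and the weak formulation \eqref{eq:equationUphi} for $U_\e^\varphi,U_\e^\psi$. The only cosmetic difference is that the paper obtains the first summand directly by writing $\int_{\Omega_\e}=\int_\Omega-\int_{\Sigma_\e}$ (the $\Omega$-integral vanishing since $\varphi,\psi\in E(\lambda_n)$), whereas you pass through the boundary integral on $\partial\Sigma_\e$ and then integrate by parts back over $\Sigma_\e$; the underlying computation is identical.
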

	\begin{proof}
		For every $\varphi,\psi\in E(\lambda_n)$ we have
		\begin{align*}
			r_\e(\varphi,\psi)&=q_\e(\varphi-U_\e^\varphi,\psi-U_\e^\psi)\\
			&=(\varphi,\psi)_{H^1(\Omega_\e)}-\lambda_n(\varphi,\psi)_{L^2(\Omega_\e)} 
			-\Big( (\varphi,U_\e^\psi)_{H^1(\Omega_\e)}-\lambda_n(\varphi,U_\e^\psi)_{L^2(\Omega_\e)} \Big) \\
			&\quad -\Big(
			(\psi,U_\e^\varphi)_{H^1(\Omega_\e)}-\lambda_n(\psi,U_\e^\varphi)_{L^2(\Omega_\e)}
			\Big)
			+(U_\e^\varphi,U_\e^\psi)_{H^1(\Omega_\e)}-\lambda_n(U_\e^\varphi,U_\e^\psi)_{L^2(\Omega_\e)}.
		\end{align*}
		Since $\varphi\in E(\lambda_n)$ we have
		\begin{equation*}
			(\varphi,\psi)_{H^1(\Omega_\e)}-\lambda_n(\varphi,\psi)_{L^2(\Omega_\e)}
			=-\int_{\Sigma_\e}(\nabla\varphi\cdot\nabla\psi-(\lambda_n-1)\varphi\psi)\dx.
		\end{equation*}
		On the other hand, by integrating by parts and using the equations
		satisfied by $\varphi$ and $U_\e^\varphi$, respectively
		$\psi$ and $U_\e^\psi$, one can prove that
		\begin{equation*}
			(\varphi,U_\e^\psi)_{H^1(\Omega_\e)}-\lambda_n(\varphi,U_\e^\psi)_{L^2(\Omega_\e)}
			=(U_\e^\varphi,U_\e^\psi)_{H^1(\Omega_\e)},
		\end{equation*}
		respectively
		\begin{equation*}
			(\psi,U_\e^\varphi)_{H^1(\Omega_\e)}-\lambda_n(\psi,U_\e^\varphi)_{L^2(\Omega_\e)}
			=(U_\e^\varphi,U_\e^\psi)_{H^1(\Omega_\e)}.
		\end{equation*}
		Summing up all the terms, we conclude the proof.
	\end{proof}
	
	\begin{lemma}\label{lemma:xi_mu}
		Let either $N=2$ and $\Sigma=B_1$ or $N\geq 3$. Let
		$\{\xi_i^\e\}_{i=1,\dots,m}$ be the eigenvalues of $q_\e$ restricted
		to $F_\e$ and let $\{\mu_i^\e\}_{i=1,\dots,m}$ be the eigenvalues of
		$r_\e$. Then
		\begin{equation}\label{eq:exp-mu-xi}
			\xi_i^\e=\mu_i^\e+o(\e^N)\quad\text{as }\e\to 0,
		\end{equation}
		for all $i=1,\dots,m$.
	\end{lemma}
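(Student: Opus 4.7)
The plan is to realize both sets of eigenvalues through the Courant--Fischer min--max principle and to transport the variational problem on $F_\e$ over to the fixed space $E(\lambda_n)$ via the bijection $P_\e$; this way, $\xi_i^\e$ and $\mu_i^\e$ become min--max values of the same numerator $r_\e(\varphi,\varphi)$ against two nearly identical denominators. Concretely, I would write
\begin{equation*}
\xi_i^\e=\min_{\substack{W\sub F_\e\\ \dim W=i}}\max_{\substack{v\in W\\ v\neq 0}}\frac{q_\e(v,v)}{\|v\|_{L^2(\Omega_\e)}^2},\qquad
\mu_i^\e=\min_{\substack{V\sub E(\lambda_n)\\ \dim V=i}}\max_{\substack{\varphi\in V\\ \varphi\neq 0}}\frac{r_\e(\varphi,\varphi)}{\|\varphi\|_{L^2(\Omega)}^2},
\end{equation*}
and then substitute $v=P_\e\varphi$ in the first formula; since $P_\e\colon E(\lambda_n)\to F_\e$ is a bijection for $\e$ small and $q_\e(P_\e\varphi,P_\e\varphi)=r_\e(\varphi,\varphi)$, this rewrites $\xi_i^\e$ as the same min--max as $\mu_i^\e$ but with $\|P_\e\varphi\|_{L^2(\Omega_\e)}^2$ in place of $\|\varphi\|_{L^2(\Omega)}^2$ at the denominator.

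Next, I would estimate uniformly over the unit sphere $\{\varphi\in E(\lambda_n):\|\varphi\|_{L^2(\Omega)}=1\}$. For the numerator, \Cref{lemma:r_e} decomposes $r_\e(\varphi,\varphi)$ into an integral over $\Sigma_\e$ of a polynomial in $\varphi$ and $\nabla\varphi$, which is $O(\e^N)$ because $E(\lambda_n)$ is finite-dimensional and so $\varphi,\nabla\varphi$ are uniformly bounded near $x_0$, plus $-\|U_\e^\varphi\|_{H^1(\Omega_\e)}^2=O(\e^N)$ from \Cref{lemma:H1_decay}, plus $-\lambda_n\|U_\e^\varphi\|_{L^2(\Omega_\e)}^2=o(\e^N)$ from \Cref{lemma:L2_decay}, all uniformly in $\varphi$. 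Hence $|r_\e(\varphi,\varphi)|=O(\e^N)$ uniformly on the unit sphere. For the denominator, expanding $\|P_\e\varphi\|_{L^2(\Omega_\e)}^2=\|\varphi-U_\e^\varphi\|_{L^2(\Omega_\e)}^2$ and applying Cauchy--Schwarz together with \Cref{lemma:L2_decay} yields $\|P_\e\varphi\|_{L^2(\Omega_\e)}^2=\|\varphi\|_{L^2(\Omega)}^2+o(1)$ uniformly.

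Combining, the difference of the two Rayleigh quotients is
\begin{equation*}
\frac{r_\e(\varphi,\varphi)\bigl(\|\varphi\|_{L^2(\Omega)}^2-\|P_\e\varphi\|_{L^2(\Omega_\e)}^2\bigr)}{\|P_\e\varphi\|_{L^2(\Omega_\e)}^2\,\|\varphi\|_{L^2(\Omega)}^2}=O(\e^N)\cdot o(1)=o(\e^N),
\end{equation*}
uniformly over the unit sphere, and passing to the min--max gives $|\xi_i^\e-\mu_i^\e|=o(\e^N)$ for each $i=1,\dots,m$, which is \eqref{eq:exp-mu-xi}. The step requiring the most care is the uniform numerator estimate $r_\e(\varphi,\varphi)=O(\e^N)$: in the planar case $N=2$ this is precisely where the restriction $\Sigma=B_1$ becomes essential, since it is only in that setting that \Cref{lemma:H1_decay} delivers the $O(\e^2)$ bound on $\|U_\e^\varphi\|_{H^1(\Omega_\e)}^2$ needed to make the whole argument go through.
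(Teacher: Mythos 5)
Your proof is correct and takes essentially the same approach as the paper: pull back $q_\e|_{F_\e}$ to $E(\lambda_n)$ via the bijection $P_\e$, use \Cref{lemma:r_e}, \Cref{lemma:H1_decay}, and \Cref{lemma:L2_decay} to show the common numerator $r_\e(\varphi,\varphi)$ is $O(\e^N)$ uniformly while the $L^2$-denominator changes only by $o(1)$, and conclude via the stability of min--max under uniform perturbation of the Rayleigh quotient. The paper packages the same estimates through the Gram matrix $C_\e$ and the product $B_\e=C_\e^{-1}R_\e$, whereas you fold the near-isometry of $P_\e$ directly into the Rayleigh quotient; the mathematical content is identical.
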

	\begin{proof}
		Let $\{\varphi_k\}_{k\in\N}$ be as in the introduction,
		i.e. $\{\varphi_k\}_{k\in\N}$ is an orthonormal basis of $L^2(\Omega)$ with
		$\varphi_k$ being an eigenfunction of problem \eqref{eq:unperturbed}
		associated to the eigenvalue $\lambda_k(\Omega)$. Hence, by
		\eqref{eq:scelta-n}, $\{\varphi_{n+i-1}\}_{i=1}^m$ is an orthonormal
		basis of $E(\lambda_n)$.
		
		Therefore, the eigenvalues $\{\mu_i^\e\}_{i=1,\dots,m}$ of $r_\e$
		coincide with the eigenvalues of the $m\times m$ symmetric real matrix
		\begin{equation*}
			R_\e=\big(r_\e(\varphi_{n+i-1}, \varphi_{n+j-1})\big)_{ij}.
		\end{equation*}
		We observe that the bilinear form $q_\e$ restricted to $F_\e$ coincides
		with the bilinear form
		\begin{equation*}
			(u,v)\in  F_\e\times F_\e \ \mapsto\  r_\e\big( P_\e^{-1}(u),
			P_\e^{-1}(v)\big).
		\end{equation*}
		Hence the eigenvalues $\{\xi_i^\e\}_{i=1,\dots,m}$ of $q_\e$
		restricted to $F_\e$ coincide with the eigenvalues of the
		$m\times m$ symmetric real matrix
		\begin{equation}\label{eq:xi_mu_1}
			B_\e=C_\e^{-1}R_\e,
		\end{equation}
		where
		\begin{equation*}
			C_\e=\big((P_\e\varphi_{n+i-1}, P_\e\varphi_{n+j-1})_{L^2(\Omega_\e)}\big)_{ij}.
		\end{equation*}   
		Since
		\begin{equation*}
			(C_\e)_{ij}=\int_{\Omega_\e}(\varphi_{n+i-1}-U_\e^{\varphi_{n+i-1}})
			(\varphi_{n+j-1}-U_\e^{\varphi_{n+j-1}})\dx\quad\text{for all }i,j=1,\dots,m,
		\end{equation*}
		from the Cauchy-Schwarz inequality and \Cref{lemma:L2_decay} it
		follows that
		\begin{align*}
			(C_\e)_{ii}&=1+o(\e^{\frac{N}{2}})\quad\text{for all }i=1,\dots,m, \\
			(C_\e)_{ij}&=o(\e^{\frac{N}{2}})\quad\text{for all }i,j=1,\dots,m,~i\neq j,
		\end{align*}
		as $\e\to 0$. By basic linear algebra, we then obtain that
		\begin{align*}
			(C_\e^{-1})_{ii}&=1+o(\e^{\frac{N}{2}})\quad\text{for }i=1,\dots,m, \\
			(C_\e^{-1})_{ij}&=o(\e^{\frac{N}{2}})\quad\text{for }i,j=1,\dots,m,~i\neq j,
		\end{align*}
		as $\e\to 0$. Moreover, from \Cref{lemma:r_e} and
		\Cref{lemma:H1_decay} we deduce that $(R_\e)_{ij}=O(\e^N)$ as
		$\e\to 0$, for all $i,j=1,\dots,m$. Combining these estimates, in
		view of \eqref{eq:xi_mu_1} we have
		\begin{equation}\label{eq:expBeRe}
			B_\e=R_\e+o(\e^N)\quad \text{as }\e\to0,
		\end{equation}
		where $o(\e^N)$ stands for a matrix with all
		entries being $o(\e^N)$ as $\e\to0$.
		The expansion \eqref{eq:exp-mu-xi} then follows from \eqref{eq:expBeRe} and
		the min-max characterization of eigenvalues.
	\end{proof}

	\begin{lemma}\label{lemma:r_e_blowup}
		Let $r_\e$ be the biliner form defined in \eqref{eq:def-r-eps}.
		\begin{enumerate}[label = (\roman*)]
			\item If $N\geq 3$, then 
			\begin{equation*}
				r_\e(\varphi,\psi)=-\e^N
				\mathcal{L}_{x_0,\Sigma}(\varphi,\psi)+o(\e^N)
				\quad \text{as }\e\to0,
			\end{equation*}
			for all $\varphi,\psi\in E(\lambda_n)$, where
			$\mathcal{L}_{x_0,\Sigma}$ is defined in \eqref{eq:limit-bi-form}.
			\item If $N=2$ and $\Sigma=B_1$, then
			\begin{equation*}
				r_\e(\varphi,\psi)=-\e^2\pi\Big(2\nabla\varphi(x_0)\cdot
				\nabla\psi(x_0)
				-(\lambda_n-1)\varphi(x_0) \psi(x_0)\Big)+o(\e^2)
				\quad \text{as }\e\to0,
			\end{equation*}    
			for all $\varphi,\psi\in E(\lambda_n)$.
		\end{enumerate}
	\end{lemma}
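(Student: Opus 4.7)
The plan is to start from the explicit expression of $r_\e$ provided by \Cref{lemma:r_e}, decomposing
\begin{equation*}
r_\e(\varphi,\psi) = -A_\e(\varphi,\psi) - B_\e(\varphi,\psi) - \lambda_n C_\e(\varphi,\psi),
\end{equation*}
where $A_\e(\varphi,\psi) := \int_{\Sigma_\e}(\nabla\varphi\cdot\nabla\psi - (\lambda_n-1)\varphi\psi)\dx$, $B_\e(\varphi,\psi) := (U_\e^\varphi, U_\e^\psi)_{H^1(\Omega_\e)}$, and $C_\e(\varphi,\psi) := (U_\e^\varphi, U_\e^\psi)_{L^2(\Omega_\e)}$. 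The task then reduces to a sharp expansion of each of these three bilinear forms.

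For the local term $A_\e$ I would perform the change of variables $y = (x-x_0)/\e$, obtaining
\begin{equation*}
A_\e(\varphi,\psi) = \e^N \int_\Sigma \big(\nabla\varphi(x_0+\e y)\cdot\nabla\psi(x_0+\e y) - (\lambda_n-1)\varphi(x_0+\e y)\psi(x_0+\e y)\big)\dy.
\end{equation*}
Since $\varphi,\psi$ are smooth in a neighborhood of the interior point $x_0$ by elliptic regularity, dominated convergence on the bounded set $\Sigma$ yields
\begin{equation*}
A_\e(\varphi,\psi) = \e^N|\Sigma|\big(\nabla\varphi(x_0)\cdot\nabla\psi(x_0) - (\lambda_n-1)\varphi(x_0)\psi(x_0)\big) + o(\e^N).
\end{equation*}
The mass term $C_\e$ is easily disposed of: by Cauchy--Schwarz and \Cref{lemma:L2_decay},
\begin{equation*}
|C_\e(\varphi,\psi)| \leq \|U_\e^\varphi\|_{L^2(\Omega_\e)}\|U_\e^\psi\|_{L^2(\Omega_\e)} \leq \omega(\e)\|\varphi\|_{L^2(\Omega)}\|\psi\|_{L^2(\Omega)} = o(\e^N),
\end{equation*}
in both regimes covered by the lemma.

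The energy term $B_\e$ is the delicate one, and here I would argue by polarization: since $\varphi\mapsto U_\e^\varphi$ is linear, one has $U_\e^{\varphi\pm\psi} = U_\e^\varphi \pm U_\e^\psi$, hence
\begin{equation*}
4 B_\e(\varphi,\psi) = \|U_\e^{\varphi+\psi}\|_{H^1(\Omega_\e)}^2 - \|U_\e^{\varphi-\psi}\|_{H^1(\Omega_\e)}^2.
\end{equation*}
In case (i), $N\geq 3$, I would apply \Cref{lemma:blowup} to the test functions $\varphi\pm\psi$, combined with the analogous polarization in $L^2(\R^N\setminus\Sigma)$ and the linearity of $f\mapsto U_{\Sigma,f}$, to obtain
\begin{equation*}
B_\e(\varphi,\psi) = \e^N \int_{\R^N\setminus\Sigma}\nabla U_{\Sigma,\nabla\varphi(x_0)\cdot\nnu}\cdot\nabla U_{\Sigma,\nabla\psi(x_0)\cdot\nnu}\dx + o(\e^N).
\end{equation*}
Summing the three expansions then reproduces exactly $-\e^N \mathcal{L}_{x_0,\Sigma}(\varphi,\psi) + o(\e^N)$. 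In case (ii), $N=2$ with $\Sigma=B_1$, the polarization step is identical, but \Cref{lemma:blowup} is not available; here I would replace it with the sharp planar asymptotic for spherical holes from \cite[Proposition 6.3]{FLO}, yielding $\|U_\e^\varphi\|_{H^1(\Omega_\e)}^2 = \pi\e^2|\nabla\varphi(x_0)|^2 + o(\e^2)$ and hence $B_\e(\varphi,\psi) = \pi\e^2\,\nabla\varphi(x_0)\cdot\nabla\psi(x_0) + o(\e^2)$. Combining this with $|B_1|=\pi$ in the expansion of $A_\e$ gives the claimed planar formula.

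The main obstacle is the energy term $B_\e$ in dimension two: the blow-up analysis for torsion functions is genuinely more subtle in the plane, owing to the different behaviour of $\mathcal{D}^{1,2}(\R^2\setminus\Sigma)$ and the logarithmic Newtonian potential, which is precisely why the statement in (ii) relies on the spherical symmetry of $\Sigma$ and on the explicit computations of \cite[Proposition 6.3]{FLO}. In higher dimension the only mild subtlety is the polarization step, but since \Cref{lemma:blowup} yields convergence for every $\varphi\in E(\lambda_n)$, evaluating it on the four functions $\varphi\pm\psi$ suffices to produce the required $o(\e^N)$ cross-term remainder.
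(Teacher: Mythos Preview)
Your proposal is correct and follows essentially the same strategy as the paper: both arguments start from the explicit expression in \Cref{lemma:r_e}, control the $L^2$ mass term via \Cref{lemma:L2_decay}, handle the local integral over $\Sigma_\e$ by rescaling (this is \cite[Lemma~3.6]{FLO} in the paper's references), and treat the energy term through the blow-up results of \cite{FLO} combined with polarization. The only organizational difference is that the paper first establishes the diagonal limit $\e^{-N}r_\e(u,u)\to -\mathcal{L}_{x_0,\Sigma}(u,u)$ and then polarizes $r_\e$ itself, whereas you decompose $r_\e$ into $A_\e,B_\e,C_\e$, expand $A_\e$ and $C_\e$ directly, and polarize only the energy piece $B_\e$; this is a cosmetic distinction.
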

	\begin{proof}
		If $N\geq3$, by Lemma \ref{lemma:r_e} and \cite[Theorem 5.6 and Lemma 3.6]{FLO} (see also Lemma
		\ref{lemma:blowup} and \eqref{eq:L2_decay_th1}), we have
		\begin{equation*}
			\lim_{\e\to0}\e^{-N}r_\e(u,u)=- \mathcal{L}_{x_0,\Sigma}(u,u)
			\quad\text{for all }u\in E(\lambda_n).
		\end{equation*}
		Statement (i) then follows simply observing  that, by bilinearity,
		\begin{align}
			\notag r_\e(\varphi,\psi)&=\frac14\Big(r_\e(\varphi+\psi,
			\varphi+\psi)-r_\e(\varphi-\psi, \varphi-\psi)\Big),\\
			\label{eq:bilform} \mathcal{L}_{x_0,\Sigma} (\varphi,\psi)&=\frac14 \Big ( \mathcal{L}_{x_0,\Sigma} (\varphi+\psi,
			\varphi+\psi)- \mathcal{L}_{x_0,\Sigma} (\varphi-\psi, \varphi-\psi) \Big),
		\end{align}
		for all $\varphi,\psi\in E(\lambda_n)$.
		
		If $N=2$ and $\Sigma=B_1$, from Lemma \ref{lemma:r_e},
		\cite[Proposition 6.3]{FLO}, and \cite[Lemma 3.6]{FLO} (see also
		\eqref{eq:L2_decay_th1}) it follows that
		\begin{equation*}
			\lim_{\e\to0}\e^{-2}r_\e(u,u)=-
			\pi\left(2\,|\nabla u(x_0)|^2-
			(\lambda_n(\Omega)-1)u^2(x_0)\right)
			\quad\text{for all }u\in E(\lambda_n).
		\end{equation*}
		Statement (ii) then follows by bilinearity as above.
	\end{proof}

	At this point, the proof of Theorems \ref{thm:1} and \ref{thm:exp-2d} are straightforward.

	\begin{proof}[Proof of Theorems \ref{thm:1}  and \ref{thm:exp-2d}]
		The proof follows from a combination of Proposition \ref{prop:CdV},
		Lemma \ref{lemma:xi_mu} and \Cref{lemma:r_e_blowup}.
	\end{proof}
	
	We are now in position to prove \Cref{cor:ball_intr}.
	
	\begin{proof}[Proof of \Cref{cor:ball_intr}]
		First of all, we recall the explicit expression of the limit torsion
		function $U_{B_1,\nabla\varphi(x_0)\cdot\nnu}$ derived in the
		proof of \cite[Lemma 6.1]{FLO}
		in the case of a spherical hole. 
		More precisely, for all $\bm{b}\in\R^N$ we have 
		\begin{equation*}
			U_{B_1,\bm{b}\cdot\nnu}(x)=-\frac{1}{N-1}|x|^{-N}\bm{b}\cdot x.
		\end{equation*}
		Therefore,
		\begin{equation*}
			\tau_{\R^N\setminus B_1}(\partial B_1, \bm{b}\cdot\nnu)=\int_{\R^N\setminus
				B_1}
			|\nabla U_{B_1,\bm{b}\cdot\nnu}|^2\dx=\frac{\omega_N}{N-1}|\bm{b}|^2,
		\end{equation*}
		so that 
		\begin{equation*}
			\mathcal{L}_{x_0,B_1}(\varphi,
			\varphi)=\omega_N\left(\frac{N}{N-1}|\nabla\varphi(x_0)|^2
			-(\lambda_n-1)\varphi^2(x_0)\right), \quad\text{for all }\varphi\in E(\lambda_n).
		\end{equation*}
		The conclusion therefore follows from \eqref{eq:bilform}.  
	\end{proof}
	
	We conclude this section with the proof of Theorem
	\ref{thm:eigenfunctions}, concerning the asymptotic behavior of the
	energy of the difference between eigenfunctions and their projection
	onto the perturbed eigenspaces.
	
	\begin{proof}[Proof of \Cref{thm:eigenfunctions}]
		Let us fix $\varphi\in
		E(\lambda_n)$ such that
		$\norm{\varphi}_{L^2(\Omega)}=1$. First of all, we observe that
		$f_\e^\varphi:=P_\e(\varphi)-\Pi_\e^nP_\e(\varphi)$ weakly solves
		\begin{equation*}
			\begin{bvp}
				-\Delta f_\e^\varphi +f_\e^\varphi &= \lambda_n
				f_\e^\varphi+\lambda_n\Pi_\e^nP_\e(\varphi)+\lambda_nU_\e^\varphi
				+\Delta(\Pi_\e^nP_\e(\varphi))-\Pi_\e^nP_\e(\varphi), &&\text{in }\Omega_\e, \\
				\partial_{\nnu}f_\e^\varphi&=0, &&\text{on
				}\partial\Omega_\e.
			\end{bvp}
		\end{equation*}
		If we multiply both sides by
		$f_\e^\varphi$ and integrate by parts on
		$\Omega_\e$, we
		obtain
		\begin{multline}\label{eq:eigenfunctions_1}
			\norm{f_\e^\varphi}_{H^1(\Omega_\e)}^2=\lambda_n\norm{f_\e^\varphi}_{L^2(\Omega_\e)}^2 \\
			+\lambda_n\int_{\Omega_\e}U_\e^\varphi
			f_\e^\varphi\dx+\int_{\Omega_\e}(\Delta(\Pi_\e^nP_\e(\varphi))
			-\Pi_\e^nP_\e(\varphi)+\lambda_n\Pi_\e^nP_\e(\varphi))f_\e^\varphi\dx.
		\end{multline}
		The rest of the proof is divided into four steps.
		
		\smallskip\noindent\textbf{Step 1:} we claim that 
		\begin{equation}\label{eq:eigenfunctions_2}
			\sup_{\substack{\varphi\in
					E(\lambda_n)\\\|\varphi\|_{L^2(\Omega)}=1}}\norm{f_\e^\varphi}_{H^1(\Omega_\e)}^2=
			o(\e^N)\quad\text{as }\e\to 0.
		\end{equation}
		In order to derive \eqref{eq:eigenfunctions_2}, we prove that each
		term on the right hand side of \eqref{eq:eigenfunctions_1} is
		$o(\e^N)$ as $\e\to 0$, uniformly with respect to $\varphi$. One
		can immediately observe that this is true for the first term and
		(by the Cauchy-Schwarz inequality) for the second term, in view of
		\eqref{eq:appl_CdV_th2} and \Cref{lemma:L2_decay}. For what
		concerns the last term in \eqref{eq:eigenfunctions_1}, we let
		$\{\varphi_{n+i-1}^\e\}_{i=1,\dots,m}$ be a
		$L^2(\Omega_\e)$-orthonormal eigenbasis of $\mathcal{E}_\e^n$, so
		that we can write
		\begin{equation*}
			\Pi_\e^nP_\e(\varphi)=\sum_{i=1}^m a_i^\e\varphi_{n+i-1}^\e,
			\quad\text{with}\quad\sum_{i=1}^m |a_i^\e|^2=
			\norm{\Pi_\e^nP_\e(\varphi)}_{L^2(\Omega_\e)}^2.
		\end{equation*}
		We have 
		\begin{equation*}
			-\Delta(\Pi_\e^nP_\e(\varphi))+\Pi_\e^nP_\e(\varphi)-
			\lambda_n\Pi_\e^nP_\e(\varphi)=\sum_{i=1}^m
			a_i^\e(\lambda_{n+i-1}^\e-\lambda_n)\varphi_{n+i-1}^\e
		\end{equation*}
		and hence
		\begin{align*}
			\|-\Delta(\Pi_\e^nP_\e(\varphi))&+\Pi_\e^n
			P_\e(\varphi)-\lambda_n\Pi_\e^nP_\e(\varphi)\|_{L^2(\Omega_\e)}^2
			=\sum_{i=1}^m|a_i^\e|^2|\lambda_{n+i-1}^\e-\lambda_n|^2  \\
			&\leq \Big(\max_{i=1,\dots,m}|\lambda_{n+i-1}^\e-\lambda_n|^2\Big)
			\norm{\Pi_\e^nP_\e(\varphi)}_{L^2(\Omega_\e)}^2 \\
			&\leq \Big(\max_{i=1,\dots,m}|\lambda_{n+i-1}^\e-\lambda_n|^2 \Big)
			\norm{P_\e(\varphi)}_{L^2(\Omega_\e)}^2 ,
		\end{align*}
		where we also used that $\Pi_\e^n$ is an orthogonal projection. On
		the other hand,
		by \eqref{eq:CdV_2} and Lemma \ref{lemma:L2_decay} we have 
		\begin{equation*}
			\sup_{\substack{\varphi\in
					E(\lambda_n)\\\|\varphi\|_{L^2(\Omega)}=1}}
			\norm{P_\e(\varphi)}_{L^2(\Omega_\e)}^2\leq
			2(1+\omega(\e))=O(1)\quad\text{as }\e\to0.
		\end{equation*}
		Moreover, \Cref{thm:1} implies that
		\begin{equation*}
			\max_{i=1,\dots,m}|\lambda_{n+i-1}^\e-\lambda_n|
			=O(\e^N)\quad\text{as }\e\to 0.
		\end{equation*}
		Applying the Cauchy-Schwarz inequality to the last term
		in \eqref{eq:eigenfunctions_1} and combining the above estimates 
		with \eqref{eq:appl_CdV_th2}, we obtain
		\begin{equation*}
			\abs{\int_{\Omega_\e}(\Delta(\Pi_\e^nP_\e(\varphi))-
				\Pi_\e^nP_\e(\varphi)+\lambda_n\Pi_\e^nP_\e(\varphi))
				f_\e^\varphi\dx}=o(\e^N)\quad\text{as }\e\to 0,
		\end{equation*}
		uniformly with respect to $\varphi$. This completes the proof of
		claim \eqref{eq:eigenfunctions_2}.
		
		\smallskip    \noindent\textbf{Step 2:} we claim that 
		\begin{equation}\label{eq:eigenfunctions_3}
			\sup_{\substack{\varphi\in
					E(\lambda_n)\\\|\varphi\|_{L^2(\Omega)}=1}}
			\norm{P_\e(\varphi)-\Pi_\e^n\varphi}_{H^1(\Omega_\e)}^2=
			o(\e^N)\quad\text{as }\e\to0.
		\end{equation}
		We can write $\Pi_\e^n P_\e(\varphi)$ and $\Pi_\e^n \varphi$ as 
		\begin{equation*}
			\Pi_\e^n P_\e(\varphi)=\sum_{i=1}^m a_i^\e \varphi_{n+i-1}^\e
			\quad\text{and}\quad \Pi_\e^n\varphi=\sum_{i=1}^m b_i^\e
			\varphi_{n+i-1}^\e,
		\end{equation*}
		so that 
		\begin{align*}
			\norm{\Pi_\e^n
				P_\e(\varphi)-\Pi_\e^n\varphi}_{H^1(\Omega_\e)}^2
			&=\sum_{i=1}^m \lambda_{n+i-1}^\e(a_i^\e-b_i^\e)^2\leq
			\lambda_{n+m-1}^\e \norm{\Pi_\e^n P_\e(\varphi)-
				\Pi_\e^n\varphi}_{L^2(\Omega_\e)}^2 \\
			&\leq \lambda_{n+m-1}^\e\norm{U_\e^\varphi}_{L^2(\Omega_\e)}^2,
		\end{align*}
		where we also used the fact that $\Pi_\e^n$ is an orthogonal
		projection and the definition of $P_\e$. Hence, from
		\Cref{lemma:L2_decay} it follows that
		\begin{equation*}
			\sup_{\substack{\varphi\in
					E(\lambda_n)\\\|\varphi\|_{L^2(\Omega)}=1}}
			\norm{\Pi_\e^n P_\e(\varphi)-\Pi_\e^n\varphi}_{H^1(\Omega_\e)}^2
			=o(\e^N)\quad\text{as }\e\to 0,
		\end{equation*}
		which, combined with \eqref{eq:eigenfunctions_2}, yields
		\eqref{eq:eigenfunctions_3}.
		
		\smallskip    \noindent\textbf{Step 3:} we claim that
		\begin{equation}\label{eq:eigenfunctions_4}
			\sup_{\substack{\varphi\in
					E(\lambda_n)\\\|\varphi\|_{L^2(\Omega)}=1}}
			\norm{P_\e(\varphi)-\frac{\Pi_\e^n\varphi}
				{\norm{\Pi_\e^n\varphi}_{L^2(\Omega_\e)}}}_{H^1(\Omega_\e)}^2
			=o(\e^N)\quad\text{as }\e\to 0.
		\end{equation}
		Arguing as in the previous step, one can derive the estimate
		\begin{equation*}
			\norm{\Pi_\e^n\varphi-\frac{\Pi_\e^n\varphi}
				{\norm{\Pi_\e^n\varphi}_{L^2(\Omega_\e)}}}_{H^1(\Omega_\e)}^2\leq
			\frac{\lambda_{n+m-1}^\e}{\norm{\Pi_\e^n\varphi}_{L^2(\Omega_\e)}^2}
			\abs{\norm{\Pi_\e^n\varphi}_{L^2(\Omega_\e)}-1}^2
		\end{equation*}
		for every $\varphi\in E(\lambda_n)$ with
		$\|\varphi\|_{L^2(\Omega)}=1$. On the other hand, 
		\eqref{eq:eigenfunctions_3} and \Cref{lemma:L2_decay} yield
		\begin{align*}
			\abs{\norm{\Pi_\e^n\varphi}_{L^2(\Omega_\e)}-1}&\leq
			\abs{\norm{\Pi_\e^n\varphi}^2_{L^2(\Omega_\e)}-1}=
			\left|\int_{\Omega_\e}(\Pi_\e^n\varphi-\varphi) (\Pi_\e^n\varphi+\varphi)\dx+ \int_{\Omega_\e}\varphi^2\dx-1\right|
			\\
			&\leq
			\norm{\varphi-\Pi_\e^n\varphi}_{L^2(\Omega_\e)}\norm{\varphi+\Pi_\e^n\varphi}_{L^2(\Omega_\e)}+\int_{\Sigma_\e}\varphi^2\dx\\
			&\leq 2
			\Big(\norm{P_\e(\varphi)-\Pi_\e^n\varphi}_{L^2(\Omega_\e)}
			+\norm{U_\e^\varphi}_{L^2(\Omega_\e)}\Big)
			+\int_{\Sigma_\e}\varphi^2\dx=o(\e^{N/2})
		\end{align*}
		as $\e\to 0$, uniformly with respect to
		$\varphi\in E(\lambda_n)$ with
		$\|\varphi\|_{L^2(\Omega)=1}$. Combining the above two
		estimates with \eqref{eq:eigenfunctions_3}, we finally obtain
		\eqref{eq:eigenfunctions_4}.
		
		\smallskip    \noindent\textbf{Step 4:} we claim that
		\begin{equation}\label{eq:eigenfunctions_fin}
			\sup_{\substack{\varphi\in
					E(\lambda_n)\\\|\varphi\|_{L^2(\Omega)}=1}}
			\left|
			\norm{\varphi-\frac{\Pi_\e^n\varphi}
				{\norm{\Pi_\e^n\varphi}_{L^2(\Omega_\e)}}}_{H^1(\Omega_\e)}^2
			-\norm{U_\e^\varphi}_{H^1(\Omega_\e)}^2\right|=o(\e^N)
			\quad\text{as }\e\to 0.
		\end{equation}
		Indeed,
		\begin{multline*}
			\norm{\varphi-\frac{\Pi_\e^n\varphi}
				{\norm{\Pi_\e^n\varphi}_{L^2(\Omega_\e)}}}_{H^1(\Omega_\e)}^2
			-\norm{U_\e^\varphi}_{H^1(\Omega_\e)}^2\\=\left(P_\e(\varphi)-
			\frac{\Pi_\e^n\varphi}
			{\norm{\Pi_\e^n\varphi}_{L^2(\Omega_\e)}}, P_\e(\varphi)-
			\frac{\Pi_\e^n\varphi}
			{\norm{\Pi_\e^n\varphi}_{L^2(\Omega_\e)}}
			+2U^\varphi_\e\right)_{H^1(\Omega_\e)},
		\end{multline*}
		so that \eqref{eq:eigenfunctions_fin} follows from the
		Cauchy-Schwarz inequality, taking into account
		\eqref{eq:eigenfunctions_4} and \Cref{lemma:H1_decay}.
		
		\smallskip \noindent The thesis of Theorem
		\ref{thm:eigenfunctions} is finally a consequence of
		\eqref{eq:eigenfunctions_fin} and \Cref{lemma:blowup}.
	\end{proof}

	\section{Splitting of double eigenvalues}\label{sec:splitting}
	
	\noindent
	The present section is devoted to the proof of \Cref{thm:3}. Hence, we
	assume $\Sigma=B_1$.
	
	\begin{proof}[Proof of \Cref{thm:3}]
		From Corollary \ref{cor:spherical}, for every $x\in \Omega$ and
		$i,j=1,\dots,m$ we have
		\begin{equation}\label{eq:asy-diff}
			\e^{-N}\big(
			\lambda_{n+i-1}(\Omega_\e^{x,B_1})-
			\lambda_{n+j-1}(\Omega_\e^{x,B_1})\big)
			=\gamma_j^{x}-\gamma_{i}^{x}+o(1)\quad\text{as }\e\to0,
		\end{equation}
		where $\{\gamma_i^{x}:i=1,2,\dots,m\}$ are the eigenvalues of
		the bilinear form $\mathcal B_{x}$ on $E(\lambda_n)$ defined in
		\eqref{eq:defB}; equivalently, once the $L^2(\Omega)$-orthonormal
		basis $\{\varphi_{n+i-1}:i=1,2,\dots,m\}$ of $E(\lambda_n)$ is fixed,
		$\{\gamma_i^{x}:i=1,2,\dots,m\}$ are the eigenvalues (in
		descending order) of the $m\times m$ real symmetric matrix
		\begin{equation*}
			B_x:=\Big(
			\mathcal B_{x}(\varphi_{n+i-1}, \varphi_{n+j-1})\Big)_{1\leq
				i,j\leq m}.
		\end{equation*}
		The conclusion will follow directly from \eqref{eq:asy-diff} once we
		have proved that
		\begin{equation}\label{eq:claim-thm3}
			\text{$\Gamma=\{x\in\Omega: \gamma_1^{x}=\gamma_2^{x}=\cdots=\gamma_m^{x}\}$
				is relatively closed in $\Omega$ and  $\dim_{\mathcal{H}}\Gamma\leq N-1$}.
		\end{equation}
		To prove claim \eqref{eq:claim-thm3}, we first observe that the
		eigenvalues of a real symmetric square matrix are all equal to each
		other if and only if the matrix is a multiple of the identity matrix.
		Therefore, $x\in\Gamma$ if and only if
		\begin{equation*}
			\mathcal B_{x}(\varphi_{n}, \varphi_{n}) =
			\mathcal B_{x}(\varphi_{n+1}, \varphi_{n+1})=\dots= \mathcal
			B_{x}(\varphi_{n+m-1}, \varphi_{n+m-1}).
		\end{equation*}
		Hence
		\begin{equation*}
			\Gamma=\{x\in\Omega: g_1(x)=g_2(x)=\cdots=g_m(x)\}
		\end{equation*}
		where
		\begin{equation*}
			g_i(x):=\frac{N}{N-1}|\nabla\varphi_{n+i-1}(x)|^2-
			(\lambda_n-1)\varphi_{n+i-1}^2(x) \quad i=1,2,\dots,m.
		\end{equation*}
		By classical regularity results, see e.g. \cite{morrey1958}, the
		eigenfunctions $\varphi_n, \varphi_{n+1},\dots, \varphi_{n+m-1}$ are
		analytic in $\Omega$. Hence the functions
		\begin{equation*}
			x\mapsto g_i(x), \quad i=1,2,\dots,m
		\end{equation*}
		are analytic in $\Omega$.  This implies, first of all, that
		$\Gamma$ is relatively closed in $\Omega$. Moreover, it is
		well-known that the zero set of any real-analytic function on a
		connected open subset of $\R^N$, not identically zero, has
		Hausdorff dimension smaller than or equal to $N-1$: this is known
		in the literature as \emph{Lojasiewicz's stratification theorem},
		see e.g. \cite[Theorem 6.3.3]{primer_analytic} and
		\cite[Proposition 3]{M20}.  Thus, if $g_1-g_2\not\equiv0$ in
		$\Omega$, then
		\begin{equation*}
			\dim_{\mathcal{H}}(\Gamma)\leq\dim_{\mathcal{H}}(\{x\in
			\Omega:g_1(x)-g_2(x)=0\})\leq N-1,
		\end{equation*}
		thus completing the proof of claim \eqref{eq:claim-thm3}. To
		conclude, it is therefore sufficient to prove that
		\begin{equation}
			\label{eq:claim2}
			g_1(x)-g_2(x)\neq 0\quad\text{for some }x\in\Omega.
		\end{equation}
		We prove \eqref{eq:claim2} arguing by contradiction. To this aim
		we assume that
		\begin{equation}\label{eq:thm2_1}
			\frac{N}{N-1}|\nabla\varphi_n(x)|^2-(\lambda_n-1)\varphi_n
			^2(x)
			=\frac{N}{N-1}|\nabla\varphi_{n+1}(x)|^2-(\lambda_n-1)
			\varphi_{n+1}^2 (x)
		\end{equation}
		for every $x\in \Omega$. The rest of the
		proof is devoted to show that this leads to a contradiction.
		
		Let
		\begin{equation*}
			u:=\varphi_n+\varphi_{n+1}\quad\text{and}\quad v:=\varphi_n-\varphi_{n+1},
		\end{equation*}
		so that \eqref{eq:thm2_1} reads as
		\begin{equation}\label{eq:thm2_2}
			\nabla u(x)\cdot\nabla v(x)=(\lambda_n-1)\,\frac{N-1}{N}\,
			u(x)v(x)\quad\text{for all }x\in\Omega.
		\end{equation}
		Moreover $u,v\in E(\lambda_n)$ are orthogonal in $L^2(\Omega)$ and
		belong to $H^1(\Omega)\cap L^\infty(\Omega)$, see
		\cite{Winkert2010}. Hence $uv\in H^1(\Omega)$.
		Moreover, for
		every $\varphi\in C^\infty(\overline{\Omega})$,
		$u\varphi\in H^1(\Omega)$ and $v\varphi\in H^1(\Omega)$; hence we
		may test the equation satisfied by $u$ with $v\varphi$ and the
		equation satisfied by $v$ with $u\varphi$, thus obtaining
		\begin{align*}
			&\int_\Omega (\nabla u\cdot\nabla \varphi)v\dx+\int_\Omega
			(\nabla u\cdot\nabla v) \varphi\dx=(\lambda_n-1)
			\int_\Omega uv \varphi\dx,\\
			&\int_\Omega (\nabla v\cdot\nabla \varphi)u\dx+\int_\Omega
			(\nabla u\cdot\nabla v) \varphi\dx=(\lambda_n-1)
			\int_\Omega uv \varphi\dx;
		\end{align*}
		summing up and using \eqref{eq:thm2_2} we obtain
		\begin{equation}\label{eq:eq-uv}
			\int_\Omega \Big(\nabla (uv)\cdot\nabla
			\varphi+uv\varphi\Big)\dx=
			\Big(\frac{2(\lambda_n-1)}{N}+1\Big) \int_\Omega uv \varphi\dx\quad
			\text{for all }\varphi\in C^\infty(\overline{\Omega}).
		\end{equation}
		By density of $C^\infty(\overline{\Omega})$ in $H^1(\Omega)$ we
		conclude that the above identity is satisfied for all
		$\varphi\in H^1(\Omega)$, i.e. $uv$ satisfies
		\begin{equation*}
			\begin{bvp}
				-\Delta(uv)+uv&=\Lambda_n uv, &&\text{in }\Omega, \\
				\partial_{\nnu}(uv)&=0, &&\text{on }\partial\Omega,
			\end{bvp}
		\end{equation*}
		in a weak sense, where
		\begin{equation*}
			\Lambda_n:=\frac{2}{N}(\lambda_n-1)+1.
		\end{equation*}
		Let us fix any nodal domain $\omega$ of the function $u$, i.e. $\omega$ is a
		connected component of $\Omega\setminus \mathcal N_u$, where
		$\mathcal N_u=\{x\in\Omega\colon u(x)=0\}$ is the nodal set of
		$u$. Without loss of generality, we assume that $u>0$ on $\omega$.
		Let us consider the space
		\begin{equation*}
			\mathcal H_\omega=\{w\in H^1(\omega):
			\mathsf{P}_{0}w\in H^1(\Omega)\}, 
		\end{equation*}
		where
		\begin{equation*}
			\mathsf{P}_{0}w=
			\begin{cases}
				w,&\text{in }\omega,\\
				0,&\text{in }\Omega\setminus\omega,
			\end{cases}
		\end{equation*}
		is the trivial extension of $w$ in $\Omega$. We observe that $\mathcal
		H_\omega$ is a closed subspace of $H^1(\omega)$, hence it is a Hilbert
		space with the same scalar product as $H^1(\Omega)$. Furthermore, we
		claim that
		\begin{equation}\label{eq:claimHomega}
			\text{if $w\in H^1(\Omega)\cap C^0(\Omega)$ and $w(x)=0$ for every
				$x\in \mathcal N_u$, then $w\restr{\omega}\in
				\mathcal H_\omega$}.
		\end{equation}
		In order to prove the claim, we argue as in \cite[Theorem
		9.17]{brezis} and consider a function $G\in~\!\!C^1(\R)$ such that
		\begin{equation*}
			|G(t)|\leq |t|\quad\text{for all }t\in\R\quad\text{and}\quad  
			G(t)=\begin{cases}
				0,&\text{if }|t|\leq 1, \\
				t,&\text{if }|t|\geq 2.
			\end{cases}
		\end{equation*}
		For some  $w\in H^1(\Omega)\cap C^0(\Omega)$ such that $w=0$ on
		$\mathcal N_u$, we introduce the sequence of functions
		\begin{equation*}
			w_k:\omega\to\R,\quad   w_k(x):=\frac{1}{k}G(k w(x)).
		\end{equation*}
		One can easily verify that $w_k\in H^1(\omega)$ and $w_k=0$ in
		some open neighborhood of $\mathcal N_u$. Hence
		$\mathsf{P}_{0}w_k\in H^1(\Omega)$, i.e.
		$w_k\in \mathcal H_\omega$. In addition, reasoning as in
		\cite[Theorem 9.17]{brezis}, one can see that
		$w_k\to w\restr{\omega}$ in $H^1(\omega)$ as
		$k\to\infty$. Therefore, $w\restr{\omega}\in\mathcal H_\omega$,
		thus proving claim \eqref{eq:claimHomega}.
		In particular, \eqref{eq:claimHomega} ensures that
		\begin{equation*}
			u\restr{\omega}\in\mathcal H_\omega\quad\text{and}\quad 
			uv\restr{\omega}\in\mathcal H_\omega.
		\end{equation*}
		Let 
		\begin{equation*}
			\Lambda_1(\omega):=
			\min\left\{ \mathcal R_\omega(w)\colon w\in \mathcal H_\omega
			\setminus\{0\}\right\},
		\end{equation*}
		where
		\begin{equation*}
			\mathcal R_\omega(w):=\frac{\displaystyle\int_\omega(|\nabla w|^2+w^2)\dx }
			{\displaystyle\int_\omega w^2\dx}.
		\end{equation*}
		By standard minimization arguments, $\Lambda_1(\omega)$ is attained by
		some function $w_0\in\mathcal H_\omega\setminus\{0\}$ such that $w_0\geq0$ in
		$\omega$; moreover, such a minimizer $w_0$ necessarily satisfies
		\begin{equation}\label{eq:eqminim}
			\int_\omega (\nabla w_0\cdot \nabla
			\varphi+w_0\varphi)\dx=\Lambda_1(\omega)
			\int_\omega w_0\varphi\dx\quad\text{for every }\varphi\in\mathcal H_\omega,
		\end{equation}
		and, by the Strong Maximum Principle, $w_0>0$ in $\omega$. It is also
		standard to prove that any function
		$w\in \mathcal H_\omega\setminus\{0\}$ such that
		$\mathcal R_\omega(w)=\Lambda_1(\omega)$ solves the variational
		equation \eqref{eq:eqminim},  being either $w>0$ or $w<0$ in $\Omega$;
		futhermore, if $w_1,w_2\in \mathcal H_\omega\setminus\{0\}$ are such
		that $\mathcal R_\omega(w_1)=\mathcal
		R_\omega(w_2)=\Lambda_1(\omega)$, then $w_2=cw_1$ for some $c\in\R\setminus\{0\}$.

		Choosing $\varphi=u\restr{\omega}$ in \eqref{eq:eqminim}, testing
		the equation satisfied by $u$ with $\mathsf{P}_{0}w_0$, and taking the
		difference we obtain that
		\begin{equation*}
			(\Lambda_1(\omega)-\lambda_n)  \int_\omega w_0 u\dx=0,
		\end{equation*}
		and hence
		\begin{equation*}
			\Lambda_1(\omega)=\lambda_n.
		\end{equation*}
		Moreover, the equation satisfied by $u$ tested with
		$\mathsf{P}_{0}u\restr{\omega}$
		implies that  $\mathcal R_\omega\big( u\restr{\omega}\big)=\lambda_n$.
		
		Since $uv\restr{\omega}\in \mathcal H_\omega$, we can test
		\eqref{eq:eq-uv} with $\mathsf{P}_{0}\big(uv\restr{\omega}\big)\in
		H^1(\Omega)$, thus obtaining $\Lambda_n=\mathcal R_\omega
		\big(uv\restr{\omega}\big)$. It follows that
		\begin{equation}\label{eq:compar-lambdas}
			\lambda_n=\Lambda_1(\omega)\leq \mathcal R_\omega
			\big(uv\restr{\omega}\big)=\Lambda_n=\frac{2}{N}(\lambda_n-1)+1
		\end{equation}
		Since $\lambda_n>1$ by \eqref{eq:lambda_n>1}, the above
		inequality directly gives rise to a contradiction if $N\geq3$.  If
		$N=2$, we have $\Lambda_n=\lambda_n$, hence \eqref{eq:compar-lambdas}
		implies that
		$\mathcal R_\omega \big(uv\restr{\omega}\big)=\Lambda_1(\omega)=
		\lambda_n= \mathcal R_\omega \big(u\restr{\omega}\big)$; this implies
		that $uv\restr{\omega}=cu\restr{\omega}$ for some constant
		$c\in\R\setminus\{0\}$, hence $v\equiv c$ in $\omega$, contradicting
		the facts that $-\Delta v+v=\lambda_n v$ in $\omega$ and
		$\lambda_n\neq1$.
	\end{proof}

	\section{Numerical exploration of the general case}\label{sec:numerics}
	
	\noindent
	We conclude this work with a brief numerical exploration that
	complements the theoretical results obtained in the Neumann case. In
	particular, we examine the behavior of multiple eigenvalues when the
	hole is not spherical. This section is intended to offer a first
	indication of what may occur in situations beyond the scope of our
	analysis, and to suggest possible directions for future work.
	
	In the following experiments, we compute the eigenvalues using the PDE
	Toolbox of the commercial software MATLAB. The underlying numerical
	scheme is the Finite Element Method (FEM), based on triangular mesh
	elements and linear basis functions. The exact numerical parameters
	used in each experiment are specified in the respective subsections.
	
	\subsection{Validation Experiment}
	We begin by validating the numerical scheme. To this end, we consider
	the setting of Corollary \ref{cor:thm3}. In particular, we consider
	\begin{equation}
		\label{eq:rectangle}
		\Omega = (-4,4)\times(-2,2) \subset \R^2
	\end{equation}
	and 
	\begin{equation}
		\label{eq:center}
		\Sigma_\e^{x_0} = x_0 + \e B_1, \quad \text{where}\quad x_0 = (2.1,0.1).
	\end{equation}
	In this setting, the eigenvalues of the unperturbed problem can be
	computed explicitly. We have
	\begin{equation}
		\label{eq:2d_computed_eig}
		\lambda_{m,n}(\Omega) = 1 + \frac{\pi^2}{16}\left(\frac{m^2}{4} +
		n^2\right),
		\quad m,n\in\N.
	\end{equation}
	We focus on the first 10 eigenvalues, counted with multiplicity. In
	particular, we observe that
	\[\lambda_{0,1} = \lambda_{2,0} \sim 1.616\quad \text{and} \quad
	\lambda_{0,2} = \lambda_{4,0} \sim 3.467\]
	have multiplicity equal to 2. 
	
	At this point, taking 
	\[
	\e = 2^{-i}, \quad  i = 0,\ldots,7,
	\]
	and setting the maximal size of the finite elements to be equal to
	$h=0.05$, we numerically solve the eigenvalue problem for both the
	unperturbed and perturbed PDE. The results are summarized in Figure
	\ref{fig:validation}.
	\begin{figure}[h!]
		\centering
		\includegraphics[width=0.75\linewidth]{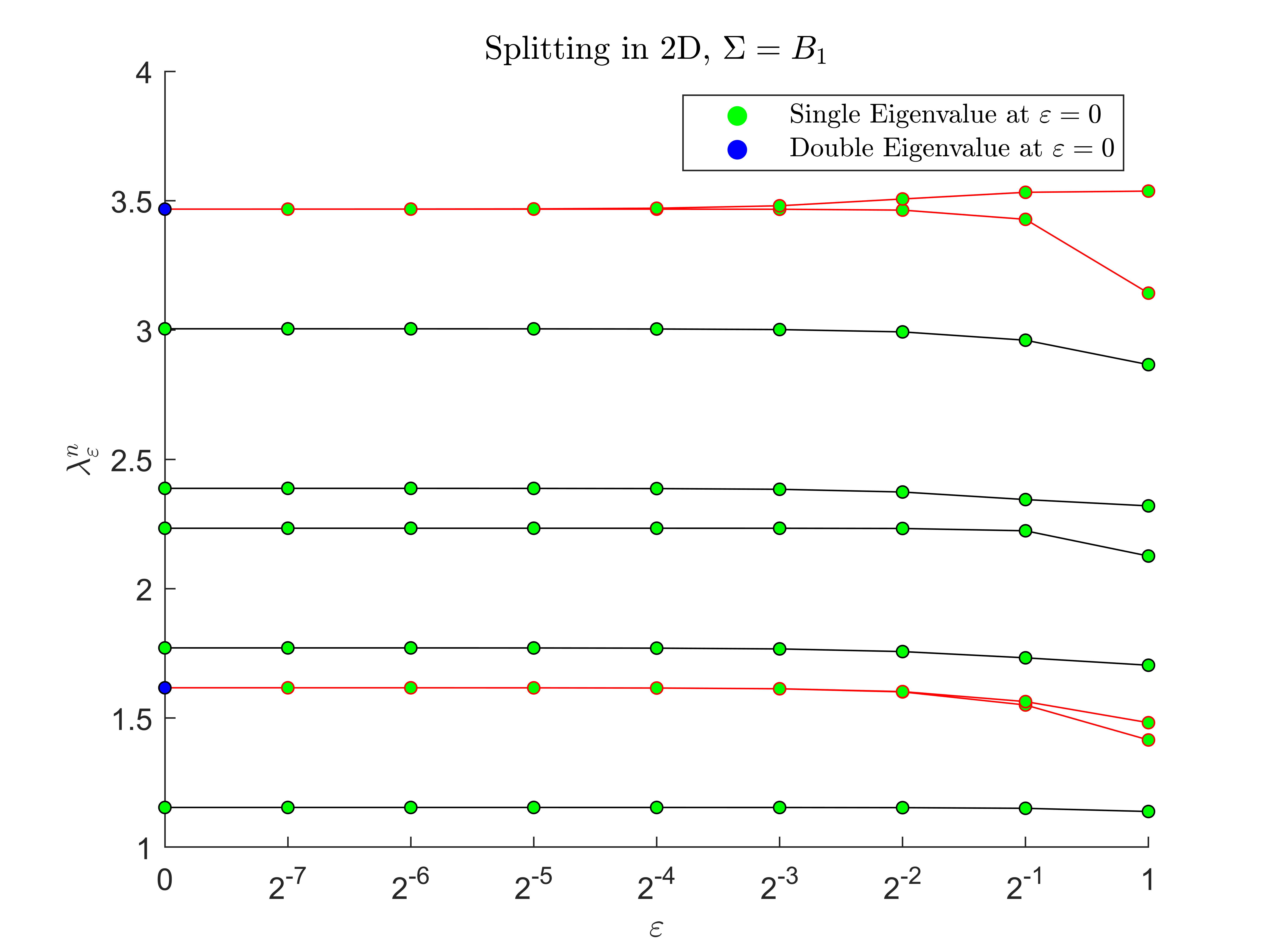}
		\caption{\footnotesize{The computed eigenvalues for different
				values of $\e$ in the case $\Sigma=B_1$. For $\e=0$ we have
				the eigenvalues of the unperturbed problem. The multiple
				eigenvalues are colored in blue.}}
		\label{fig:validation}
	\end{figure}
	
	As we can see, the two eigenvalues with multiplicity 2 at $\e = 0$
	(colored in blue) split into different branches as soon as $\e>0$, as
	predicted by Corollary \ref{cor:thm3}. Note that, even though we could
	have hard-coded the value of the unperturbed eigenvalues, we have
	preferred to also compute them numerically, to further ascertain the
	precision of the solver.
	
	\subsection{The 2D Non-spherical Case}
	
	We now move to an experiment in a setting not covered by our
	theoretical results. The setting of the problem at $\e = 0$ is the
	same outlined in the previous subsection: the domain $\Omega$ is the
	rectangle \eqref{eq:rectangle}, and the eigenvalues of the unperturbed
	problem are described by \eqref{eq:2d_computed_eig}. We also position
	the center $x_0$ of the hole as in \eqref{eq:center}. However, in this
	case $\Sigma$ is a five-pointed star, see Figure
	\ref{fig:mesh}. In particular, the chosen shape is not spherical,
	nor convex.
	
	\begin{figure}[h!]
		\centering
		\includegraphics[width=0.58\linewidth]{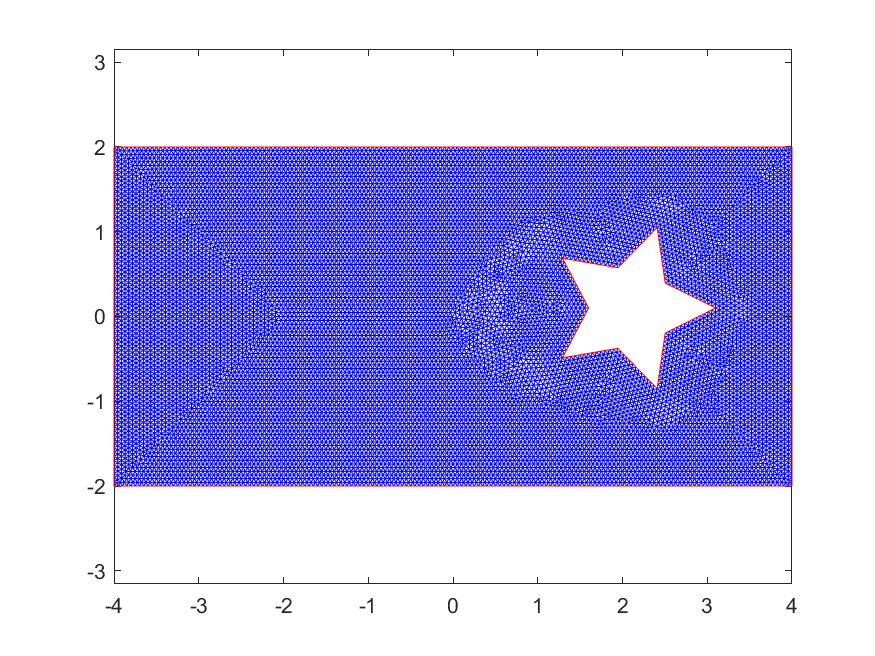}
		\caption{\footnotesize{The setting of the second experiment. In
				this case, the hole is shaped as a five-pointed star. Here, we
				can see the FEM mesh used by the numerical solver in the case
				$\e = 1$.}}
		\label{fig:mesh}
	\end{figure}
	
	As before, we compute the eigenvalues of the perturbed problem for
	increasingly small values of the diameter
	\[
	\e = 2^{-i}, \quad  i = 0,\ldots,7.
	\]
	The results are shown in Figure \ref{fig:exp1}.  The behavior is
	very similar to that observed in the spherical case, suggesting
	that splitting should happen also for non-spherical holes.
	\begin{figure}[h!]
		\centering
		\includegraphics[width=0.75\linewidth]{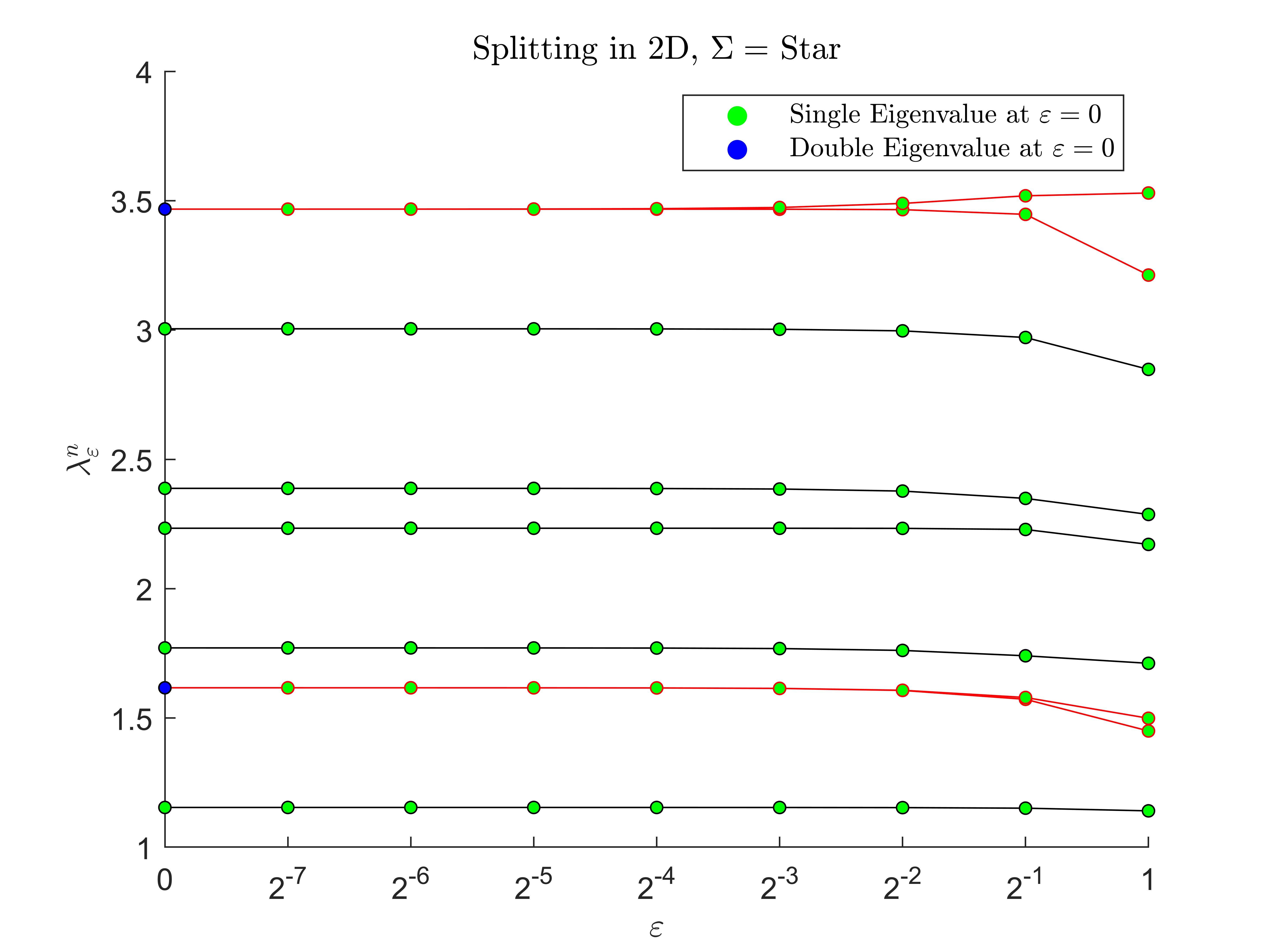}
		\caption{\footnotesize{The computed eigenvalues for different
				values of $\e$ in the case $\Sigma$ is a five-pointed
				star. For $\e=0$ we have the eigenvalues of the unperturbed
				problem. The multiple eigenvalues are colored in blue.}}
		\label{fig:exp1}
	\end{figure}
	
	\subsection{The 3D Non-spherical Case}
	
	To conclude, we consider a case in three dimensions. This time, we
	consider the box-shaped domain
	\[
	\Omega = (-1,1) \times (-2,2) \times (-3,3).
	\]
	Accordingly, the eigenvalues of the unperturbed problem follow the
	formula
	\[
	\lambda_{\ell,m,n}(\Omega) = 1 + \frac{\pi^2}{4}\left(\ell^2 +
	\frac{m^2}{4} + \frac{n^2}{9}\right), \quad \ell, m, n \in \N. 
	\]
	In particular, the spectrum includes eigenvalues with
	multiplicities $2$ and $3$.  In our experiment we will focus on
	\begin{align*}
		\lambda_{1,0,0} = \lambda_{0,2,0} = \lambda_{0,0,3} &= 1+ \frac{\pi^2}{4}\sim 3.467, \\ 
		\lambda_{1,0,1} = \lambda_{0,2,1} &\sim 3.741,\\
		\lambda_{1,1,0} = \lambda_{0,1,3} &\sim 4,084.
	\end{align*}
	The hole shape $\Sigma$ is given, in this case, by a hollow cylinder,
	with outer ratio $R$, inner ratio $r = R/2$ and height $h = 2R$. We
	choose the base value $R=0.5$ and let
	\[
	\e = 0.05 \cdot i, \quad i = 0,\ldots,10.
	\]
	The reason for this linear scaling, rather than the logarithmic one of
	the previous plots, is due to the FEM scheme. Indeed, using a
	logarithmic scaling of $\e$ would make the hole too small for the mesh
	to see at reasonable mesh sizes. An example of the perforated domain
	used in this experiment can be observed in Figure \ref{fig:exp3d}.
	The results are shown in Figure \ref{fig:exp3d_2}. We observe not only
	the splitting of the double eigenvalues, but also that of a triple
	eigenvalue into three distinct branches. Interestingly, the splitting
	of the triple eigenvalue is considerably more pronounced than that of
	the double ones.
	
	\begin{figure}[h!]
		\centering
		\includegraphics[width=0.7\linewidth]{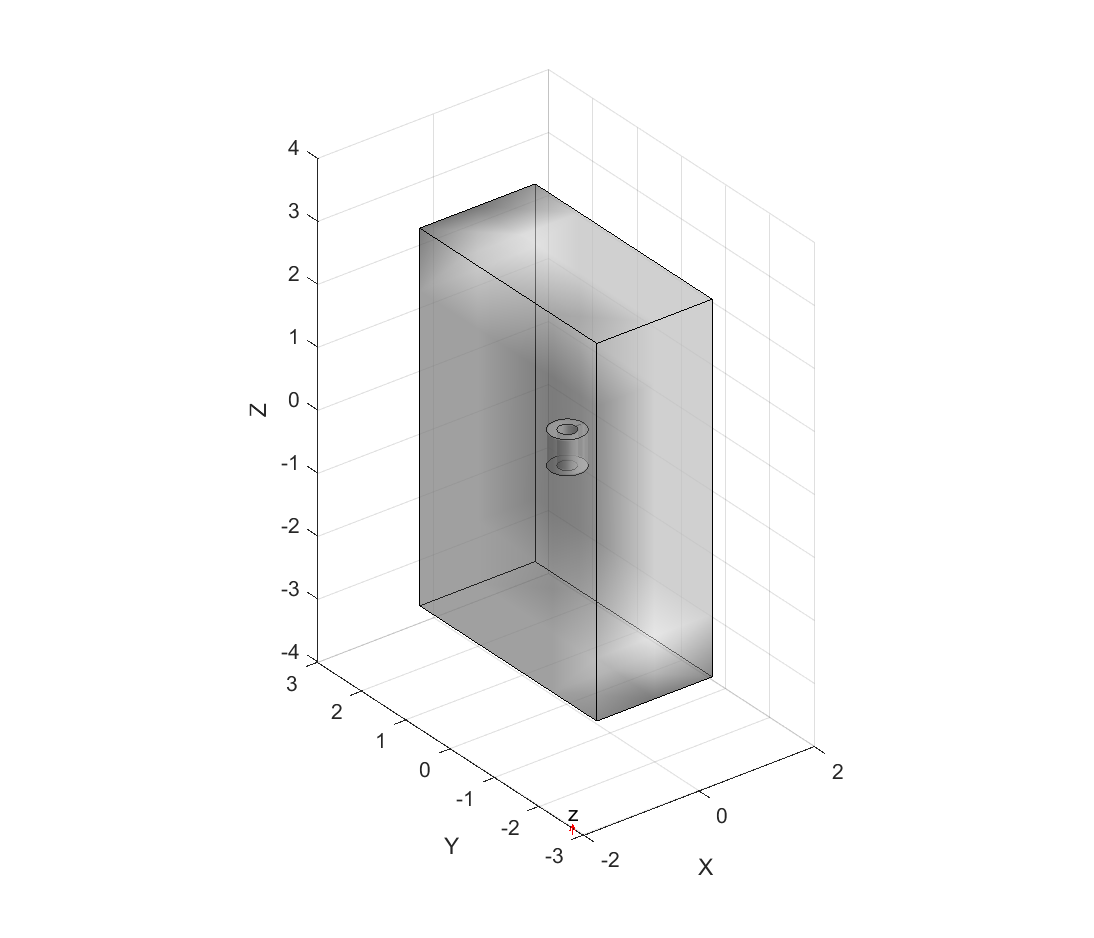}
		\caption{The perforated domain used in this numerical experiment. In this case $\e=0.25$.}
		\label{fig:exp3d}
	\end{figure}

	\begin{figure}[h!]
		\centering
		\includegraphics[width=0.75\linewidth]{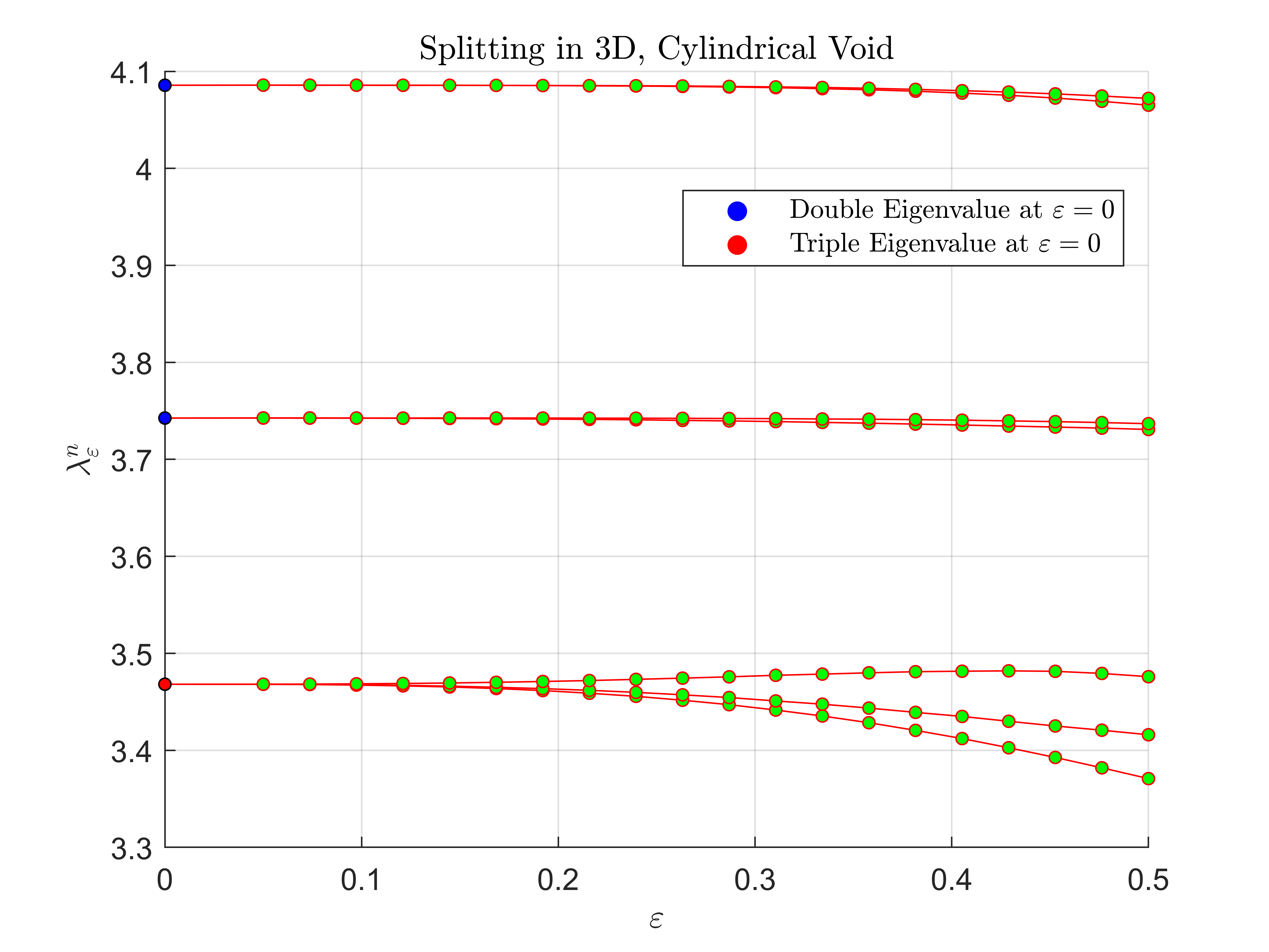}
		\caption{\footnotesize{The computed eigenvalues for different
				values of $\e$ in the case $\Sigma$ is a hollow cilinder. For
				$\e=0$ we have some multiple eigenvalues of the unperturbed problem. The
				double eigenvalues are colored in blue. The triple eigenvalue
				is colored in red.}}
		\label{fig:exp3d_2}
	\end{figure}

	\appendix
	\section{The Dirichlet Setting}\label{appendix}
	
	\noindent
	In this short appendix, we investigate the Dirichlet
	setting. Exploiting results from references
	\cite{AFHL,ALM24,FNO1}, we recover known splitting properties for
	the Dirichlet Laplacian, see for instance
	\cite{flucher,dabrwoski1}. We consider the eigenvalue problem for the
	Laplacian in a bounded connected open set $\Omega\sub\R^N$, namely
	\begin{equation}\label{eq:dirichlet}\tag{$D$}
		\begin{bvp}
			-\Delta u&=\lambda u, &&\text{in }\Omega, \\
			u&=0, &&\text{on }\partial\Omega.
		\end{bvp}
	\end{equation}
	Problem \eqref{eq:dirichlet} is known to have a sequence of
	eigenvalues $\{\lambda_k^{\textup{D}}(\Omega)\}_{k\in\N}$. We
	denote by $E(\lambda_k^{\textup{D}}(\Omega))$ the eigenspace
	corresponding to the eigenvalue $\lambda_k^{\textup{D}}(\Omega)$.
	
	For some fixed $n\in\N$, let $m\in\N\setminus\{0\}$ be the
	multiplicity of $\lambda_n^{\textup{D}}(\Omega)$, so that
	\begin{equation*}
		\lambda_{n-1}^{\textup{D}}(\Omega)<\lambda_n^{\textup{D}}(\Omega)=
		\cdots=\lambda_{n+m-1}^{\textup{D}}(\Omega)
		<\lambda_{n+m}^{\textup{D}}(\Omega).
	\end{equation*}
	Let $\{u_{n+i-1}\}_{i=1,\dots,m}$ be a corresponding eigenbasis of
	$E(\lambda_n^{\textup{D}} (\Omega))$, assumed to be
	$L^2(\Omega)$-orthonormal.
	Let
	\begin{equation*}
		\mathcal N(\lambda_n^{\textup{D}} (\Omega))=
		\big\{x\in\Omega:u_{n+i-1}(x)=0\text{ for all }i=1,\dots,m\big\}
	\end{equation*}
	be the intersection of the nodal sets of all the eigenfunctions
	$\{u_{n+i-1}\}_{i=1,\dots,m}$ of the basis. We observe that
	$\mathcal N(\lambda_n^{\textup{D}} (\Omega))$ does not depend on
	the choice of the basis, has zero Lebesgue measure, and has 
	Hausdorff dimension at most $N-1$.
	
	For any compact set $K\subset \Omega$ and $u\in H^1_0(\Omega)$, we denote by 
	\begin{equation*}
		\mathop{\rm cap}\nolimits_{\Omega}(K,u):=\min\left\{\int_\Omega |\nabla v|^2\dx
		\colon v\in H^1_0(\Omega)\text{ and }v-u\in H^1_0(\Omega\setminus K)\right\}
	\end{equation*}
	the $u$-capacity of $K$ relative to $\Omega$ and by
	$V_{K,u}\in H^1_0(\Omega)$ the corresponding capacitary potential,
	i.e. the unique function attaining the above minimum.  Choosing
	$u=\eta_K$ for some function
	$\eta_K\in C^\infty_{\rm c}(\Omega)$ such that $\eta_K\equiv1$ in a
	neighborhood of $K$, we recover the classical notion of capacity
	$\mathop{\rm cap}\nolimits_{\Omega}(K):= \mathop{\rm cap}\nolimits_{\Omega}(K,\eta_K)$.

	We recall the following result from \cite[Theorem 1.9]{ALM2022}, see
	also \cite{Courtois1995}.
	\begin{theorem}[\cite{ALM2022}]\label{t:ALM22}
		Let $N\geq 2$. Let $K\subset \Omega$ be a compact set such that
		$\mathop{\rm cap}_\Omega(K)=0$ and let $\{K_\e\}_{\e>0}$ be a family of
		compact subsets of $\Omega$ which is concentrating to $K$, that is:
		for any open set $U\sub\R^N$ with $K\subset U$, there exists $\e_U>0$
		such that $K_\e\subset U$ for all $\e<\e_U$. Then, for every $i=1,\dots,m$,
		\begin{equation*}
			\lambda_{n+i-1}^{\textup{D}}(\Omega\setminus K_\e)=
			\lambda_n^{\textup{D}}(\Omega)+\mu_i^\e+o(\chi_\e^2)\quad\text{as }\e\to 0,
		\end{equation*}
		where $\{\mu_i^\e\}_{i=1,\dots,m}$ are the eigenvalues of the bilinear form
		\begin{equation}\label{eq:bfd}
			r_\e(u,v):=\int_{\Omega}\nabla V_{K_\e,u}\cdot
			\nabla V_{K_\e,v}\dx-\lambda_n^{\textup{D}}(\Omega)\int_\Omega V_{K_\e,u}V_{K_\e,v}\dx
		\end{equation}
		defined for $u,v\in E(\lambda_n^{\textup{D}}(\Omega))$, and
		\begin{equation*}
			\chi_\e^2:=\sup\left\{\mathop{\rm cap}\nolimits_\Omega(K_\e,u)\colon u\in
			E(\lambda_n^{\textup{D}}(\Omega)),~\norm{u}_{L^2(\Omega)}=1\right\}.
		\end{equation*}
	\end{theorem}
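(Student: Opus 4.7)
The plan is to adapt to the Dirichlet setting the very same scheme developed in Section 3 for the Neumann problem. One would apply the Colin de Verdière Lemma on small eigenvalues with the choices $\mathcal{H} = L^2(\Omega\setminus K_\e)$, $\mathcal{D} = H^1_0(\Omega\setminus K_\e)$, bilinear form
\begin{equation*}
q_\e(u,v) := \int_\Omega \nabla u \cdot \nabla v\,dx - \lambda_n^{\textup{D}}(\Omega) \int_\Omega uv\,dx,
\end{equation*}
and $F_\e := P_\e(E(\lambda_n^{\textup{D}}(\Omega)))$, where for $u\in E(\lambda_n^{\textup{D}}(\Omega))$ we define $P_\e u := u - V_{K_\e, u}$. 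By construction $P_\e u \in H^1_0(\Omega\setminus K_\e)$, so $F_\e$ makes sense as a subspace of $\mathcal{D}$; the injectivity of $P_\e$ on $E(\lambda_n^{\textup{D}}(\Omega))$, and hence $\dim F_\e = m$ for $\e$ small, will be a byproduct of the $L^2$-smallness of the capacitary potentials discussed below.

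Hypothesis (H1), with spectral gap $\gamma$ depending only on the neighbouring unperturbed eigenvalues, is delivered by the classical spectral stability result for the Dirichlet Laplacian under concentration of compact sets of vanishing capacity. To check (H2), for $u\in H^1_0(\Omega\setminus K_\e)$ (trivially extended to $\Omega$) and $v\in E(\lambda_n^{\textup{D}}(\Omega))$, I would decompose $q_\e(u, P_\e v) = q_\e(u,v) - q_\e(u, V_{K_\e, v})$. The first term vanishes by the eigenfunction equation for $v$, while the minimizing character of $V_{K_\e, v}$ forces $\int_\Omega \nabla u \cdot \nabla V_{K_\e, v}\,dx = 0$, whence
\begin{equation*}
q_\e(u, P_\e v) = \lambda_n^{\textup{D}}(\Omega) \int_\Omega u\,V_{K_\e, v}\,dx,
\end{equation*}
and Cauchy--Schwarz yields $\delta \leq \lambda_n^{\textup{D}}(\Omega)\,\sup_{\|v\|_{L^2}=1} \|V_{K_\e, v}\|_{L^2(\Omega)}$.

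The heart of the argument, and the step I expect to be the main obstacle, is the Dirichlet analog of \Cref{lemma:L2_decay}: the uniform negligibility
\begin{equation*}
\sup_{\substack{v\in E(\lambda_n^{\textup{D}}(\Omega))\\ \|v\|_{L^2}=1}} \frac{\|V_{K_\e, v}\|_{L^2(\Omega)}^2}{\|\nabla V_{K_\e, v}\|_{L^2(\Omega)}^2} \longrightarrow 0 \quad \text{as } \e \to 0,
\end{equation*}
which, combined with $\|\nabla V_{K_\e, v}\|^2_{L^2(\Omega)} = \mathop{\rm cap}_\Omega(K_\e, v) \leq \chi_\e^2$, gives $\sup_{\|v\|_{L^2}=1} \|V_{K_\e, v}\|_{L^2(\Omega)}^2 = o(\chi_\e^2)$. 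I would establish this by contradiction, mimicking the proof of \Cref{lemma:L2_decay}: normalize $V_j := V_{K_{\e_j}, v_j}/\|V_{K_{\e_j}, v_j}\|_{L^2(\Omega)}$, show that $\{V_j\}$ is bounded in $H^1_0(\Omega)$, extract a weak limit $V$ with $\|V\|_{L^2(\Omega)}=1$, and pass to the limit in the identity $\int_\Omega \nabla V_j \cdot \nabla \varphi\,dx = 0$ valid for every $\varphi\in C^\infty_c(\Omega\setminus K)$ (provided $\e_j$ is small enough, using concentration of $K_{\e_j}$ onto $K$). Since $\mathop{\rm cap}_\Omega(K) = 0$, such test functions are dense in $H^1_0(\Omega)$, and this forces $V\equiv 0$, a contradiction. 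Once this is available, (H2) holds with $\delta = o(\chi_\e)$, and \eqref{eq:CdV_th1} produces $|\nu_{n+i-1} - \xi_i| \leq 4\delta^2/\gamma = o(\chi_\e^2)$, where $\{\xi_i\}$ are the eigenvalues of $q_\e$ restricted to $F_\e$.

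It remains to identify $\xi_i$ with $\mu_i^\e$, following the lines of \Cref{lemma:xi_mu}. A direct bilinear computation, invoking again the harmonicity of $V_{K_\e, u}, V_{K_\e, v}$ in $\Omega\setminus K_\e$ and the eigenfunction equations for $u,v$, produces the identity
\begin{equation*}
q_\e(P_\e u, P_\e v) = \int_\Omega \nabla V_{K_\e, u}\cdot\nabla V_{K_\e, v}\,dx - \lambda_n^{\textup{D}}(\Omega)\int_\Omega V_{K_\e, u} V_{K_\e, v}\,dx = r_\e(u,v)
\end{equation*}
for all $u,v\in E(\lambda_n^{\textup{D}}(\Omega))$. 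Hence, with respect to the orthonormal basis $\{u_{n+i-1}\}_{i=1}^m$, the matrix of $q_\e|_{F_\e}$ is $C_\e^{-1} R_\e$, with $R_\e = (r_\e(u_{n+i-1}, u_{n+j-1}))_{ij} = O(\chi_\e^2)$ and Gram matrix $C_\e = (P_\e u_{n+i-1}, P_\e u_{n+j-1})_{L^2(\Omega_\e)} = I + o(1)$ by the smallness lemma above. Therefore $\xi_i = \mu_i^\e + o(\chi_\e^2)$ by min-max perturbation, and combining with the Colin de Verdière bound yields the claimed expansion $\lambda_{n+i-1}^{\textup{D}}(\Omega\setminus K_\e) = \lambda_n^{\textup{D}}(\Omega) + \mu_i^\e + o(\chi_\e^2)$.
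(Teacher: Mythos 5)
This theorem is quoted in the paper directly from \cite[Theorem 1.9]{ALM2022} (see also \cite{Courtois1995}), so there is no in-house proof to compare against. Your blind reconstruction is correct and follows precisely the Colin de Verdière strategy that the paper itself deploys in Section \ref{sec:asympt-mult-eigenv} for the Neumann analogue — with $P_\e u := u - V_{K_\e,u}$ playing the role of $\varphi - U_\e^\varphi$, and the Dirichlet computation in fact closing more cleanly since the identity $q_\e(P_\e u, P_\e v) = r_\e(u,v)$ holds exactly, without the residual hole-integral terms that arise in \Cref{lemma:r_e}.
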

	Let $\Sigma\sub\R^N$ be any bounded non-empty open set and
	$\e_0>0$ be as in \eqref{eq:choose-eps-zero}. For every
	$x_0\in\Omega$ and $\e\in (0,\e_0)$, let $\Sigma_\e^{x_0}$ be
	defined in \eqref{eq:delSigma-eps}. From \cite[Proposition
	1.5]{AFHL} it follows that, for every
	$u\in E(\lambda_n^{\textup{D}}(\Omega))$ and $x_0\in\Omega$,
	\begin{equation}\label{eq:dir1}
		\mathop{\rm cap}\nolimits_{\Omega}(\overline{\Sigma_\e^{x_0}},u)=u^2(x_0)
		\mathop{\rm cap}\nolimits_{\Omega}(\overline{\Sigma_\e^{x_0}})
		+o(\mathop{\rm cap}\nolimits_{\Omega}(\overline{\Sigma_\e^{x_0}}))\quad\text{as
		}\e\to 0.
	\end{equation}
	Furthermore, by \cite[Corollary A.2]{AFHL} we have  
	\begin{equation}\label{eq:dir2}
		\left\|V_{\overline{\Sigma_\e^{x_0}},u}\right\|^2_{L^2(\Omega)}\dx=o(\mathop{\rm cap}\nolimits_{\Omega}
		(\overline{\Sigma_\e^{x_0}})) \quad\text{as
		}\e\to 0.
	\end{equation}
	If $N=2$, \cite[Proposition 1.6]{AFHL} yields
	\begin{equation}\label{eq:dir3}
		\mathop{\rm
			cap}\nolimits_{\Omega}(\overline{\Sigma_\e^{x_0}})
		=\frac{2\pi}{|\log\e|}+o\left(\frac1
		{|\log\e|}\right)\quad\text{as }\e\to0,
	\end{equation}
	whereas, if $N\geq 3$, we have
	\begin{equation}\label{eq:dir4}
		\mathop{\rm cap}\nolimits_{\Omega}(\overline{\Sigma_\e^{x_0}})=
		\e^{N-2}
		\mathop{\rm cap}\nolimits_{\R^N}(\overline{\Sigma})+o(\e^{N-2})\quad\text{as }\e\to0,
	\end{equation}
	in view of \cite[Theorem 2.14]{FNO1} and \cite[Appendix A]{ALM24}, where
	$\mathop{\rm cap}\nolimits_{\R^N}(\overline{\Sigma})$ is the standard
	Newtonian capacity of $\overline{\Sigma}$, i.e.
	\begin{equation*}
		\mathop{\rm cap}\nolimits_{\R^N}(\overline{\Sigma})=
		\inf\left\{\int_{\R^N}
		|\nabla u|^2\dx\colon u
		\in \mathcal{D}^{1,2}(\R^N),\  u\equiv 1
		\text{ in a neighborhood of }\overline{\Sigma}\right\}.
	\end{equation*}
	Let $r_\e$ be the bilinear form in \eqref{eq:bfd} with
	$K_\e=\overline{\Sigma_\e^{x_0}}$ for some $x_0\in\Omega$.  Combining
	\eqref{eq:dir1}, \eqref{eq:dir2}, \eqref{eq:dir3}, and
	\eqref{eq:dir4}, we obtain 
	\begin{align*}
		r_\e(u,u)&=u^2(x_0)\mathrm{cap}_{\Omega}(\overline{\Sigma_\e^{x_0}})
		+o(\mathrm{cap}_{\Omega}(\overline{\Sigma_\e^{x_0}}))\\
		&=
		\begin{cases}
			\dfrac{2\pi\, u^2(x_0)}{|\log\e|}+o\left(\dfrac1
			{|\log\e|}\right),&\text{if }N=2,\\[10pt]
			\e^{N-2} \mathop{\rm cap}\nolimits_{\R^N}(\overline{\Sigma})\,
			u^2(x_0)+o(\e^{N-2}) ,&\text{if }N\geq 3,
		\end{cases}
	\end{align*}
	as $\e\to0$. Since, by bilinearity,
	$r_\e(u,v)=\frac14\big(r_\e(u+v,u+v)
	-r_\e(u-v,u-v)\big)$, we then deduce that, as $\e\to0$,
	\begin{equation}\label{eq:eqReD}
		r_\e(u,v)=
		\begin{cases}
			|\log\e|^{-1}\mathcal{Q}_{x_0,\Sigma}(u,v)+o(|\log\e|^{-1}),&\text{if }N=2,\\
			\e^{N-2}\mathcal{Q}_{x_0,\Sigma}(u,v)+o(\e^{N-2}),&\text{if }N\geq3,
		\end{cases}
	\end{equation}
	for every $u,v\in E(\lambda_n^{\textup{D}}(\Omega))$, 
	where
	\begin{align}\label{eq:limQFD}
		&\mathcal{Q}_{x_0,\Sigma}: E(\lambda_n^{\textup{D}}(\Omega))\times
		E(\lambda_n^{\textup{D}}(\Omega))\to \R,\\
		&\notag  \mathcal{Q}_{x_0,\Sigma}(u,v)=
		\begin{cases}
			2\pi \,u(x_0)v(x_0),&\text{if }N=2,\\
			\mathop{\rm cap}\nolimits_{\R^N}(\overline{\Sigma}) \,
			u(x_0)v(x_0),&\text{if }N\geq3.
		\end{cases}
	\end{align}
	Theorem \ref{t:ALM22}
	can be then restated in the following more precise version.

	\begin{proposition}\label{prop:dir_expansion}
		For any $x_0\in\Omega$, let $\{\zeta_i^{x_0,\Sigma}\}_{i=1,\dots,m}$
		be the eigenvalues (in ascending order) of the quadratic form
		$\mathcal{Q}_{x_0,\Sigma}$ defined in \eqref{eq:limQFD}. Then, as $\e\to0$,
		\begin{equation*}
			\lambda_{n+i-1}^{\textup{D}}(\Omega_\e^{x_0,\Sigma})=
			\begin{cases}
				\lambda_n^{\textup{D}}(\Omega)+|\log\e|^{-1}\zeta_i^{x_0,\Sigma}
				+o\left(|\log\e|^{-1}\right),&\text{if }N=2,\\[7pt]
				\lambda_n^{\textup{D}}(\Omega)+\e^{N-2}\zeta_i^{x_0,\Sigma}
				+o\left(\e^{N-2}\right),&\text{if }N\geq 3,
			\end{cases}
		\end{equation*}
		for every $i=1,2,\dots,m$.
	\end{proposition}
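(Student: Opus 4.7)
The plan is to derive the proposition as a direct consequence of Theorem \ref{t:ALM22} applied to the concentrating family $K_\e=\overline{\Sigma_\e^{x_0}}$, combined with the asymptotic expansion \eqref{eq:eqReD} of the bilinear form $r_\e$ already established just before the statement. The singleton $\{x_0\}$ has zero capacity in $\Omega$, and $\overline{\Sigma_\e^{x_0}}\to\{x_0\}$ in the sense of concentration, so the hypotheses of Theorem \ref{t:ALM22} are satisfied.

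First, I would control the normalising factor $\chi_\e^2$. Using \eqref{eq:dir1} together with the fact that the eigenspace $E(\lambda_n^{\textup{D}}(\Omega))$ is finite-dimensional (so that the $L^2$-normalised eigenfunctions form a compact set and $u^2(x_0)$ is uniformly bounded), one checks that
\begin{equation*}
\chi_\e^2=O\bigl(\mathop{\rm cap}\nolimits_\Omega(\overline{\Sigma_\e^{x_0}})\bigr)=
\begin{cases}O(|\log\e|^{-1}),&N=2,\\ O(\e^{N-2}),&N\geq 3,\end{cases}
\end{equation*}
thanks to \eqref{eq:dir3} and \eqref{eq:dir4}. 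Consequently the remainder $o(\chi_\e^2)$ in Theorem \ref{t:ALM22} matches the remainder claimed in the proposition.

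Next, I would identify the eigenvalues $\mu_i^\e$ of $r_\e$. Fixing the $L^2(\Omega)$-orthonormal basis $\{u_{n+i-1}\}_{i=1,\dots,m}$ of $E(\lambda_n^{\textup{D}}(\Omega))$, the eigenvalues $\mu_i^\e$ coincide with those of the symmetric matrix $R_\e=\bigl(r_\e(u_{n+i-1},u_{n+j-1})\bigr)_{ij}$. By \eqref{eq:eqReD}, entry by entry,
\begin{equation*}
R_\e=\begin{cases}|\log\e|^{-1}Q+o(|\log\e|^{-1}),&N=2,\\ \e^{N-2}Q+o(\e^{N-2}),&N\geq 3,\end{cases}
\end{equation*}
where $Q=\bigl(\mathcal{Q}_{x_0,\Sigma}(u_{n+i-1},u_{n+j-1})\bigr)_{ij}$ is the matrix of the limiting quadratic form in the same basis. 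Applying the min--max principle (or the continuity of eigenvalues of symmetric matrices) then gives, for every $i=1,\dots,m$,
\begin{equation*}
\mu_i^\e=\begin{cases}|\log\e|^{-1}\zeta_i^{x_0,\Sigma}+o(|\log\e|^{-1}),&N=2,\\ \e^{N-2}\zeta_i^{x_0,\Sigma}+o(\e^{N-2}),&N\geq 3,\end{cases}
\end{equation*}
where $\{\zeta_i^{x_0,\Sigma}\}_{i=1}^m$ are the ordered eigenvalues of $\mathcal{Q}_{x_0,\Sigma}$. Substituting this into the expansion provided by Theorem \ref{t:ALM22}, and absorbing the $o(\chi_\e^2)$ remainder into the dominant scale $|\log\e|^{-1}$ or $\e^{N-2}$ via the bound on $\chi_\e^2$ computed in the first step, yields exactly the statement of the proposition.

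No step is really a serious obstacle here: the heavy lifting is done in Theorem \ref{t:ALM22} and in the capacity-based expansions \eqref{eq:dir1}--\eqref{eq:dir4} recalled from \cite{AFHL,ALM24,FNO1}. The only points requiring a little care are the uniformity in the eigenspace needed to estimate $\chi_\e^2$ (which is automatic by finite-dimensionality) and the passage from entrywise matrix asymptotics to eigenvalue asymptotics, which is standard.
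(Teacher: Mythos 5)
Your proposal is correct and takes essentially the same route as the paper, which proves the proposition by a one-line appeal to Theorem \ref{t:ALM22} together with \eqref{eq:eqReD}--\eqref{eq:limQFD}; you simply make explicit the intermediate steps (the bound $\chi_\e^2=O(\mathrm{cap}_\Omega(\overline{\Sigma_\e^{x_0}}))$ from \eqref{eq:dir1}, \eqref{eq:dir3}, \eqref{eq:dir4}, and the passage from entrywise matrix asymptotics of $R_\e$ to eigenvalue asymptotics) that the paper leaves implicit.
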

	\begin{proof}
		The conclusion follows from Theorem \ref{t:ALM22} taking into
		account
		\eqref{eq:eqReD}--\eqref{eq:limQFD}.
	\end{proof}

	The computation of the eigenvalues $\zeta_i^{x_0,\Sigma}$ then
	gives the following result.
	\begin{theorem}\label{t:split-dir-m}
		Let $N\geq2$.
		\begin{enumerate}
			\item[\rm (i)] For every $x_0\in \Omega$ we have, as $\e\to0$,
			\begin{equation*}
				\lambda_{n+m-1}^{\textup{D}}(\Omega_\e^{x_0,\Sigma})=
				\begin{cases}
					\lambda_n^{\textup{D}}(\Omega)+|\log\e|^{-1}
					2\pi\left(\sum_{i=1}^mu_{n+i-1}^2(x_0)\right)
					+o\left(|\log\e|^{-1}\right),&\text{if }N=2,\\[7pt]
					\lambda_n^{\textup{D}}(\Omega)+\e^{N-2}\mathop{\rm cap}
					\nolimits_{\R^N}(\overline{\Sigma})\left(\sum_{i=1}^mu_{n+i-1}^2(x_0)\right)
					+o\left(\e^{N-2}\right),&\text{if }N\geq 3,
				\end{cases}
			\end{equation*}
			and, for every $i=1,2,\ldots,m-1$, 
			\begin{equation*}
				\lambda_{n+i-1}^{\textup{D}}(\Omega_\e^{x_0,\Sigma})=
				\begin{cases}
					\lambda_n^{\textup{D}}(\Omega)+o\left(|\log\e|^{-1}\right),&\text{if }N=2,\\[7pt]
					\lambda_n^{\textup{D}}(\Omega)+o\left(\e^{N-2}\right),&\text{if }N\geq 3.
				\end{cases}
			\end{equation*}
			\item[\rm (ii)] For every
			$x_0\in \Omega\setminus \mathcal N(\lambda_n^{\textup{D}}
			(\Omega))$, there exists $\e_{\tu{s}}=\e_{\tu{s}}(x_0)>0$ such that
			the eigenvalue
			$\lambda_{n+m-1}^{\textup{D}}(\Omega_\e^{x_0,\Sigma})$ is simple and
			the eigenvalues
			$\{\lambda_{n+i-1}^{\tu{D}}(\Omega_\e^{x_0,\Sigma}(x_0))\}_{i=1}^{m-1}$
			have multiplicity strictly lower than $m$ for all
			$0<\e\leq \e_{\tu{s}}$. 
		\end{enumerate}
	\end{theorem}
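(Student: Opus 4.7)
The plan is to invoke Proposition \ref{prop:dir_expansion} and identify the eigenvalues $\zeta_i^{x_0,\Sigma}$ explicitly via a rank-one computation, which then yields both (i) and (ii) by elementary separation arguments.

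First I would compute the eigenvalues of the bilinear form $\mathcal{Q}_{x_0,\Sigma}$ on $E(\lambda_n^{\textup{D}}(\Omega))$. Using the $L^2(\Omega)$-orthonormal basis $\{u_{n+i-1}\}_{i=1,\dots,m}$, the representing matrix is
\begin{equation*}
M=c_N\bigl(u_{n+i-1}(x_0)\,u_{n+j-1}(x_0)\bigr)_{1\leq i,j\leq m}=c_N\,\mathbf v\mathbf v^{T},
\end{equation*}
where $\mathbf v=(u_n(x_0),\dots,u_{n+m-1}(x_0))^{T}$ and $c_N=2\pi$ if $N=2$, $c_N=\mathop{\rm cap}\nolimits_{\R^N}(\overline{\Sigma})$ if $N\geq3$. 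Since $M$ is a symmetric rank-one (or zero) positive semidefinite matrix, its eigenvalues in ascending order are
\begin{equation*}
\zeta_1^{x_0,\Sigma}=\cdots=\zeta_{m-1}^{x_0,\Sigma}=0,\qquad \zeta_m^{x_0,\Sigma}=c_N\sum_{i=1}^m u_{n+i-1}^2(x_0).
\end{equation*}
Substituting these values into the asymptotic formula of Proposition \ref{prop:dir_expansion} immediately gives the two expansions in statement (i), separately for the cases $N=2$ and $N\geq3$.

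For statement (ii), I observe that $x_0\notin\mathcal{N}(\lambda_n^{\textup{D}}(\Omega))$ forces at least one $u_{n+i-1}(x_0)\neq0$, hence $\zeta_m^{x_0,\Sigma}>0$. Comparing the leading terms in (i), there exist $\e_{\tu{s}}>0$ and $C>0$ such that for every $\e\in(0,\e_{\tu{s}})$
\begin{equation*}
\lambda_{n+m-1}^{\textup{D}}(\Omega_\e^{x_0,\Sigma})-\lambda_{n+m-2}^{\textup{D}}(\Omega_\e^{x_0,\Sigma})\geq C\,\sigma_N(\e),
\end{equation*}
where $\sigma_N(\e)=|\log\e|^{-1}$ if $N=2$ and $\sigma_N(\e)=\e^{N-2}$ if $N\geq3$; in particular the two values are strictly distinct. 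Moreover, by the spectral stability recalled through \eqref{eq:conveig} (applied to the Dirichlet case) together with $\lambda_{n-1}^{\textup{D}}(\Omega)<\lambda_n^{\textup{D}}(\Omega)$ and $\lambda_{n+m-1}^{\textup{D}}(\Omega)<\lambda_{n+m}^{\textup{D}}(\Omega)$, after possibly shrinking $\e_{\tu{s}}$ we also have
\begin{equation*}
\lambda_{n-1}^{\textup{D}}(\Omega_\e^{x_0,\Sigma})<\lambda_n^{\textup{D}}(\Omega_\e^{x_0,\Sigma}),\qquad \lambda_{n+m-1}^{\textup{D}}(\Omega_\e^{x_0,\Sigma})<\lambda_{n+m}^{\textup{D}}(\Omega_\e^{x_0,\Sigma}).
\end{equation*}
The first two inequalities together imply that $\lambda_{n+m-1}^{\textup{D}}(\Omega_\e^{x_0,\Sigma})$ is simple. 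For $i=1,\dots,m-1$, the multiplicity of $\lambda_{n+i-1}^{\textup{D}}(\Omega_\e^{x_0,\Sigma})$ is confined to the indices in $\{n,\dots,n+m-2\}$, hence it cannot exceed $m-1$.

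The argument is essentially a consequence of Proposition \ref{prop:dir_expansion}, so there is no real obstacle; the only point deserving care is the bookkeeping of the ordering of $\zeta_i^{x_0,\Sigma}$ (ascending) versus $\lambda_{n+i-1}^{\textup{D}}(\Omega_\e^{x_0,\Sigma})$, and the verification that the multiplicity of each of the lower eigenvalues cannot extend across the spectral gap on either side, which is ensured by spectral stability.
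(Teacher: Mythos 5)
Your proposal is correct and matches the paper's argument: both compute the eigenvalues of the rank-one Gram matrix $c_N\,\mathbf{v}\mathbf{v}^T$ (namely $0$ with multiplicity $m-1$ and $c_N\sum_i u_{n+i-1}^2(x_0)$ with multiplicity $1$) and feed them into Proposition~\ref{prop:dir_expansion}; the additional spectral-gap bookkeeping you supply for part~(ii) is implicit in the paper's one-line proof.
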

	\begin{proof}
		The result follows from Proposition \ref{prop:dir_expansion} after
		observing that the $m\times m$ real symmetric matrix
		\begin{equation*}
			\Big(
			u_{n+i-1}(x_0)\cdot u_{n+j-1}(x_0)\Big)_{1\leq
				i,j\leq m}
		\end{equation*}
		has eigenvalues $0$ (with multiplicity $m-1$) and
		$\sum_{i=1}^mu_{n+i-1}^2(x_0)$ (with multiplicity $1$).    
	\end{proof}

	As a direct byproduct of Theorem
	\ref{t:split-dir-m} we have the following genericity result for the splitting
	of double Dirichlet eigenvalues.
	
	\begin{corollary}
		Let $N\geq2$ and $m=2$.  There exists a relatively closed set
		$\mathcal N\sub\Omega$ such that $\dim_{\mathcal{H}}\mathcal N\leq N-1$
		and the following holds: for every
		$x_0\in \Omega\setminus \mathcal N$ there exists
		$\e_{\tu{s}}=\e_{\tu{s}}(x_0)>0$ such that
		$\lambda_n^{\tu{D}}(\Omega_\e^{x_0,B_1}(x_0))
		<\lambda_{n+1}^{\tu{D}}(\Omega_\e^{x_0,B_1}(x_0))$ for all
		$0<\e\leq \e_{\tu{s}}$. In particular, for every
		$x_0\in \Omega\setminus \mathcal N$,
		$\lambda_n^{\tu{D}}(\Omega_\e^{x_0,B_1})$ and
		$\lambda_{n+1}^{\tu{D}}(\Omega_\e^{x_0,B_1})$ are simple, provided
		$\e$ is sufficiently small.
	\end{corollary}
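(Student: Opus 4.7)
The plan is to apply Theorem \ref{t:split-dir-m} with $m=2$ and $\Sigma=B_1$, taking $\mathcal N$ to be precisely the set $\mathcal N(\lambda_n^{\textup{D}}(\Omega))$ introduced earlier in the appendix, namely the intersection of the nodal sets of any $L^2(\Omega)$-orthonormal basis $\{u_n,u_{n+1}\}$ of $E(\lambda_n^{\textup{D}}(\Omega))$. The observation already recorded in the excerpt is that this set is independent of the choice of basis and has Hausdorff dimension at most $N-1$; relative closedness in $\Omega$ comes from the continuity (indeed analyticity, by \cite{morrey1958}) of the eigenfunctions, since $\mathcal N$ is the intersection of the zero sets of $u_n$ and $u_{n+1}$, which are relatively closed subsets of $\Omega$.

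Next, I would unpack Theorem \ref{t:split-dir-m} in the case $m=2$. Part (ii) of that theorem directly gives that, for every $x_0\in\Omega\setminus\mathcal N$, there exists $\e_{\textup{s}}(x_0)>0$ for which $\lambda_{n+1}^{\textup{D}}(\Omega_\e^{x_0,B_1})$ is simple and $\lambda_n^{\textup{D}}(\Omega_\e^{x_0,B_1})$ has multiplicity strictly less than $2$ (hence is simple as well) for all $0<\e\leq \e_{\textup{s}}(x_0)$. It remains to show that the two consecutive eigenbranches are actually separated, i.e.\ that the smaller one lies strictly below the larger one.

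To obtain the strict inequality $\lambda_n^{\textup{D}}(\Omega_\e^{x_0,B_1})<\lambda_{n+1}^{\textup{D}}(\Omega_\e^{x_0,B_1})$, I would rely on the sharp asymptotic expansion in Theorem \ref{t:split-dir-m}(i). Subtracting the expansions for the two eigenbranches gives, as $\e\to 0$,
\begin{equation*}
  \lambda_{n+1}^{\textup{D}}(\Omega_\e^{x_0,B_1})-\lambda_n^{\textup{D}}(\Omega_\e^{x_0,B_1})=
  \begin{cases}
    |\log\e|^{-1}\,2\pi\bigl(u_n^2(x_0)+u_{n+1}^2(x_0)\bigr)+o(|\log\e|^{-1}),&\text{if }N=2,\\[4pt]
    \e^{N-2}\,\mathop{\rm cap}\nolimits_{\R^N}(\overline{B_1})\bigl(u_n^2(x_0)+u_{n+1}^2(x_0)\bigr)+o(\e^{N-2}),&\text{if }N\geq 3.
  \end{cases}
\end{equation*}
Since $x_0\notin\mathcal N$ means that at least one among $u_n(x_0)$ and $u_{n+1}(x_0)$ is nonzero, the leading coefficient $u_n^2(x_0)+u_{n+1}^2(x_0)$ is strictly positive. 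Choosing $\e_{\textup{s}}(x_0)$ smaller if necessary, the remainder is dominated by the leading term and the difference is strictly positive for all $0<\e\leq\e_{\textup{s}}(x_0)$.

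The argument is essentially bookkeeping, with no serious obstacle beyond invoking the previous results; the only delicate point is justifying that $\mathcal N$ has the claimed size, which follows from the fact that $u_n$ is a nonzero real-analytic function on the connected open set $\Omega$, so $\{u_n=0\}\supset \mathcal N$ has Hausdorff dimension at most $N-1$ by \L ojasiewicz's stratification theorem, exactly as invoked in the proof of Theorem \ref{thm:3}.
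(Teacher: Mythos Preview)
Your proposal is correct and matches the paper's approach: the paper presents the corollary as ``a direct byproduct of Theorem~\ref{t:split-dir-m}'' without further proof, and your argument is precisely the unpacking of that byproduct, with $\mathcal N=\mathcal N(\lambda_n^{\textup{D}}(\Omega))$. One minor remark: your Step~3 (the asymptotic computation of the difference) is actually redundant, since once Theorem~\ref{t:split-dir-m}(ii) gives that $\lambda_{n+1}^{\textup{D}}(\Omega_\e^{x_0,B_1})$ is simple, the strict inequality $\lambda_n^{\textup{D}}(\Omega_\e^{x_0,B_1})<\lambda_{n+1}^{\textup{D}}(\Omega_\e^{x_0,B_1})$ follows immediately---were they equal, that common value would be an eigenvalue of multiplicity at least~$2$.
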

	
	\section*{Acknowledgements}
	\noindent
	V. Felli and R. Ognibene are members of GNAMPA-INdAM.
	R. Ognibene was partially supported by the European Research Council
	(ERC), through the European Union’s Horizon 2020 project ERC VAREG -
	Variational approach to the regularity of the free boundaries (grant
	agreement No. 853404).  V. Felli is supported by the MUR-PRIN
	project no. 20227HX33Z ``Pattern formation in nonlinear phenomena''
	granted by the European Union - Next Generation EU.  The
	authors would like to thank prof. B. Velichkov for helpful discussions.

	\bibliographystyle{aomalpha}
	
	\bibliography{biblio}

\providecommand{\bysame}{\leavevmode\hbox to3em{\hrulefill}\thinspace}
\providecommand{\noopsort}[1]{}
\providecommand{\mr}[1]{\href{http://www.ams.org/mathscinet-getitem?mr=#1}{MR~#1}}
\providecommand{\zbl}[1]{\href{http://www.zentralblatt-math.org/zmath/en/search/?q=an:#1}{Zbl~#1}}
\providecommand{\jfm}[1]{\href{http://www.emis.de/cgi-bin/JFM-item?#1}{JFM~#1}}
\providecommand{\arxiv}[1]{\href{http://www.arxiv.org/abs/#1}{arXiv~#1}}
\providecommand{\doi}[1]{\url{https://doi.org/#1}}
\providecommand{\MR}{\relax\ifhmode\unskip\space\fi MR }
\providecommand{\MRhref}[2]{%
  \href{http://www.ams.org/mathscinet-getitem?mr=#1}{#2}
}
\providecommand{\href}[2]{#2}
\begin{thebibliography}{FGMP23}

\bibitem[AFHL19]{AFHL}
\bgroup\scshape{}L.~Abatangelo\egroup{}, \bgroup\scshape{}V.~Felli\egroup{},
  \bgroup\scshape{}L.~Hillairet\egroup{}, and
  \bgroup\scshape{}C.~L\'{e}na\egroup{}, Spectral stability under removal of
  small capacity sets and applications to {A}haronov-{B}ohm operators,
  \emph{J. Spectr. Theory} \textbf{9} no.~2 (2019), 379--427. \mr{3950657}.
  \doi{10.4171/JST/251}.

\bibitem[ALM22]{ALM2022}
\bgroup\scshape{}L.~Abatangelo\egroup{}, \bgroup\scshape{}C.~L\'{e}na\egroup{},
  and \bgroup\scshape{}P.~Musolino\egroup{}, Ramification of multiple
  eigenvalues for the {D}irichlet-{L}aplacian in perforated domains,  \emph{J.
  Funct. Anal.} \textbf{283} no.~12 (2022), Paper No. 109718. \mr{4489278}.
  \doi{10.1016/j.jfa.2022.109718}.

\bibitem[ALM24]{ALM24}
\bgroup\scshape{}L.~Abatangelo\egroup{}, \bgroup\scshape{}C.~L\'ena\egroup{},
  and \bgroup\scshape{}P.~Musolino\egroup{}, Asymptotic behavior of generalized
  capacities with applications to eigenvalue perturbations: the higher
  dimensional case,  \emph{Nonlinear Anal.} \textbf{238} (2024), Paper No.
  113391, 34. \mr{4648492}.  \doi{10.1016/j.na.2023.113391}.

\bibitem[AO24]{AO2023}
\bgroup\scshape{}L.~Abatangelo\egroup{} and
  \bgroup\scshape{}R.~Ognibene\egroup{}, Sharp behavior of
  {D}irichlet-{L}aplacian eigenvalues for a class of singularly perturbed
  problems,  \emph{SIAM J. Math. Anal.} \textbf{56} no.~1 (2024), 474--500.
  \mr{4686384}.  \doi{10.1137/23M1564444}.

\bibitem[Alb75]{albert1}
\bgroup\scshape{}J.~H. Albert\egroup{}, Genericity of simple eigenvalues for
  elliptic {PDE}'s,  \emph{Proc. Amer. Math. Soc.} \textbf{48} (1975),
  413--418. \mr{385934}.  \doi{10.2307/2040275}.

\bibitem[Alb78]{albert2}
\bgroup\scshape{}J.~H. Albert\egroup{}, Generic properties of eigenfunctions of
  elliptic partial differential operators,  \emph{Trans. Amer. Math. Soc.}
  \textbf{238} (1978), 341--354. \mr{471000}.  \doi{10.2307/1997812}.

\bibitem[BW80]{bleecker}
\bgroup\scshape{}D.~D. Bleecker\egroup{} and \bgroup\scshape{}L.~C.
  Wilson\egroup{}, Splitting the spectrum of a {R}iemannian manifold,
  \emph{SIAM J. Math. Anal.} \textbf{11} no.~5 (1980), 813--818. \mr{586909}.
  \doi{10.1137/0511072}.

\bibitem[BGI24]{brasco2024}
\bgroup\scshape{}L.~Brasco\egroup{}, \bgroup\scshape{}M.~Gonz\'alez\egroup{},
  and \bgroup\scshape{}M.~Ispizua\egroup{}, A {S}teklov version of the
  torsional rigidity,  \emph{Commun. Contemp. Math.} \textbf{26} no.~7 (2024),
  Paper No. 2350037, 43. \mr{4760553}.  \doi{10.1142/S0219199723500372}.

\bibitem[Bre11]{brezis}
\bgroup\scshape{}H.~Brezis\egroup{}, \emph{Functional analysis, {S}obolev
  spaces and partial differential equations}, \emph{Universitext}, Springer,
  New York, 2011. \mr{2759829}.

\bibitem[BLMN25]{bucur_GAFA}
\bgroup\scshape{}D.~Bucur\egroup{}, \bgroup\scshape{}R.~S. Laugesen\egroup{},
  \bgroup\scshape{}E.~Martinet\egroup{}, and
  \bgroup\scshape{}M.~Nahon\egroup{}, Spherical {C}aps do {N}ot {A}lways
  {M}aximize {N}eumann {E}igenvalues on the {S}phere,  \emph{Geom. Funct.
  Anal.} \textbf{35} no.~5 (2025), 1313--1345. \mr{4990469}.
  \doi{10.1007/s00039-025-00721-1}.

\bibitem[CKL16]{stokes}
\bgroup\scshape{}Y.~Chitour\egroup{}, \bgroup\scshape{}D.~Kateb\egroup{}, and
  \bgroup\scshape{}R.~Long\egroup{}, Generic properties of the spectrum of the
  {S}tokes system with {D}irichlet boundary condition in {$\mathbb{R^3}$},
  \emph{Ann. Inst. H. Poincar\'e{} C Anal. Non Lin\'eaire} \textbf{33} no.~1
  (2016), 119--167. \mr{3436429}.  \doi{10.1016/j.anihpc.2014.09.007}.

\bibitem[Cou95]{Courtois1995}
\bgroup\scshape{}G.~Courtois\egroup{}, Spectrum of manifolds with holes,
  \emph{J. Funct. Anal.} \textbf{134} no.~1 (1995), 194--221. \mr{1359926}.
  \doi{10.1006/jfan.1995.1142}.

\bibitem[LdC12]{LdC2012}
\bgroup\scshape{}M.~Lanza~de Cristoforis\egroup{}, Simple {N}eumann eigenvalues
  for the {L}aplace operator in a domain with a small hole. {A} functional
  analytic approach,  \emph{Rev. Mat. Complut.} \textbf{25} no.~2 (2012),
  369--412. \mr{2931418}.  \doi{10.1007/s13163-011-0081-8}.

\bibitem[Dab21]{dabrwoski1}
\bgroup\scshape{}A.~Dabrowski\egroup{}, A localized domain perturbation which
  splits the spectrum of the {L}aplacian,  \emph{Complex Var. Elliptic Equ.}
  \textbf{66} no.~8 (2021), 1425--1437. \mr{4296810}.
  \doi{10.1080/17476933.2020.1767084}.

\bibitem[FGMP23]{fall}
\bgroup\scshape{}M.~M. Fall\egroup{}, \bgroup\scshape{}M.~Ghimenti\egroup{},
  \bgroup\scshape{}A.~M. Micheletti\egroup{}, and
  \bgroup\scshape{}A.~Pistoia\egroup{}, Generic properties of eigenvalues of
  the fractional {L}aplacian,  \emph{Calc. Var. Partial Differential Equations}
  \textbf{62} no.~8 (2023), Paper No. 233, 17. \mr{4645031}.
  \doi{10.1007/s00526-023-02574-8}.

\bibitem[FLO25]{FLO}
\bgroup\scshape{}V.~Felli\egroup{}, \bgroup\scshape{}L.~Liverani\egroup{}, and
  \bgroup\scshape{}R.~Ognibene\egroup{}, Quantitative spectral stability for
  the {N}eumann {L}aplacian in domains with small holes,  \emph{J. Funct.
  Anal.} \textbf{288} no.~6 (2025), Paper No. 110817, 59. \mr{4848582}.
  \doi{10.1016/j.jfa.2024.110817}.

\bibitem[FNO21]{FNO1}
\bgroup\scshape{}V.~Felli\egroup{}, \bgroup\scshape{}B.~Noris\egroup{}, and
  \bgroup\scshape{}R.~Ognibene\egroup{}, Eigenvalues of the {L}aplacian with
  moving mixed boundary conditions: the case of disappearing {D}irichlet
  region,  \emph{Calc. Var. Partial Differential Equations} \textbf{60} no.~1
  (2021), Paper No. 12, 33. \mr{4201635}.  \doi{10.1007/s00526-020-01878-3}.

\bibitem[FNOS23]{FNOS}
\bgroup\scshape{}V.~Felli\egroup{}, \bgroup\scshape{}B.~Noris\egroup{},
  \bgroup\scshape{}R.~Ognibene\egroup{}, and
  \bgroup\scshape{}G.~Siclari\egroup{}, Quantitative spectral stability for
  {Aharonov}-{Bohm} operators with many coalescing poles,  \emph{J. Eur. Math.
  Soc., Online First} (2023). Available at
  \url{https://doi.org/10.4171/jems/1632}.

\bibitem[FR23]{FR2023}
\bgroup\scshape{}V.~Felli\egroup{} and \bgroup\scshape{}G.~Romani\egroup{},
  Perturbed eigenvalues of polyharmonic operators in domains with small holes,
  \emph{Calc. Var. Partial Differential Equations} \textbf{62} no.~4 (2023),
  Paper No. 128, 36. \mr{4568178}.  \doi{10.1007/s00526-023-02467-w}.

\bibitem[FRS25]{FSR}
\bgroup\scshape{}V.~Felli\egroup{}, \bgroup\scshape{}P.~Roychowdhury\egroup{},
  and \bgroup\scshape{}G.~Siclari\egroup{}, Quantitative spectral stability for
  the robin laplacian,  \emph{Preprint} (2025). Available at
  \url{https://arxiv.org/abs/2504.05994}.

\bibitem[Flu95]{flucher}
\bgroup\scshape{}M.~Flucher\egroup{}, Approximation of {D}irichlet eigenvalues
  on domains with small holes,  \emph{J. Math. Anal. Appl.} \textbf{193} no.~1
  (1995), 169--199. \mr{1338506}.  \doi{10.1006/jmaa.1995.1228}.

\bibitem[GMP24]{fall2}
\bgroup\scshape{}M.~Ghimenti\egroup{}, \bgroup\scshape{}A.~M.
  Micheletti\egroup{}, and \bgroup\scshape{}A.~Pistoia\egroup{}, A note on the
  persistence of multiplicity of eigenvalues of fractional {L}aplacian under
  perturbations,  \emph{Nonlinear Anal.} \textbf{245} (2024), Paper No. 113558,
  7. \mr{4744963}.  \doi{10.1016/j.na.2024.113558}.

\bibitem[Jim15]{jimbo}
\bgroup\scshape{}S.~Jimbo\egroup{}, Eigenvalues of the {L}aplacian in a domain
  with a thin tubular hole,  \emph{J. Elliptic Parabol. Equ.} \textbf{1}
  (2015), 137--174. \mr{3403416}.  \doi{10.1007/BF03377373}.

\bibitem[KP02]{primer_analytic}
\bgroup\scshape{}S.~G. Krantz\egroup{} and \bgroup\scshape{}H.~R.
  Parks\egroup{}, \emph{A primer of real analytic functions}, second ed.,
  \emph{Birkh\"auser Advanced Texts: Basler Lehrb\"ucher. [Birkh\"auser
  Advanced Texts: Basel Textbooks]}, Birkh\"auser Boston, Inc., Boston, MA,
  2002. \mr{1916029}.  \doi{10.1007/978-0-8176-8134-0}.

\bibitem[LNS11]{LNS2011}
\bgroup\scshape{}A.~Laurain\egroup{}, \bgroup\scshape{}S.~Nazarov\egroup{}, and
  \bgroup\scshape{}J.~Sokolowski\egroup{}, Singular perturbations of curved
  boundaries in three dimensions. {T}he spectrum of the {N}eumann {L}aplacian,
  \emph{Z. Anal. Anwend.} \textbf{30} no.~2 (2011), 145--180. \mr{2792999}.
  \doi{10.4171/ZAA/1429}.

\bibitem[LZ22]{luzzini-zaccaron}
\bgroup\scshape{}P.~Luzzini\egroup{} and \bgroup\scshape{}M.~Zaccaron\egroup{},
  A few results on permittivity variations in electromagnetic cavities,
  \emph{J. Differential Equations} \textbf{334} (2022), 342--367. \mr{4448300}.
   \doi{10.1016/j.jde.2022.06.023}.

\bibitem[Mic72a]{micheletti2}
\bgroup\scshape{}A.~M. Micheletti\egroup{}, Metrica per famiglie di domini
  limitati e propriet\`a{} generiche degli autovalori,  \emph{Ann. Scuola Norm.
  Sup. Pisa Cl. Sci. (3)} \textbf{26} (1972), 683--694. \mr{377306}.

\bibitem[Mic72b]{micheletti1}
\bgroup\scshape{}A.~M. Micheletti\egroup{}, Perturbazione dello spettro
  dell'operatore di {L}aplace, in relazione ad una variazione del campo,
  \emph{Ann. Scuola Norm. Sup. Pisa Cl. Sci. (3)} \textbf{26} (1972), 151--169.
  \mr{367480}.

\bibitem[Mic73]{micheletti3}
\bgroup\scshape{}A.~M. Micheletti\egroup{}, Perturbazione dello spettro di un
  operatore ellittico di tipo variazionale, in relazione ad una variazione del
  campo,  \emph{Ann. Mat. Pura Appl. (4)} \textbf{97} (1973), 267--281.
  \mr{336109}.  \doi{10.1007/BF02414915}.

\bibitem[Mic76]{micheletti4}
\bgroup\scshape{}A.~M. Micheletti\egroup{}, Perturbazione dello spettro di un
  operatore ellittico di tipo variazionale, in relazione ad una variazione del
  campo. {II},  \emph{Ricerche Mat.} \textbf{25} no.~2 (1976), 187--200.
  \mr{481619}.

\bibitem[Mit20]{M20}
\bgroup\scshape{}B.~S. Mityagin\egroup{}, The zero set of a real analytic
  function,  \emph{Math. Notes} \textbf{107} no.~3 (2020), 529--530 (English).
  \doi{10.1134/S0001434620030189}.

\bibitem[Mor58]{morrey1958}
\bgroup\scshape{}C.~B. Morrey, Jr.\egroup{}, On the analyticity of the
  solutions of analytic non-linear elliptic systems of partial differential
  equations. {I}. {A}nalyticity in the interior,  \emph{Amer. J. Math.}
  \textbf{80} (1958), 198--218. \mr{106336}.  \doi{10.2307/2372830}.

\bibitem[Ogn25]{O}
\bgroup\scshape{}R.~Ognibene\egroup{}, On asymptotics of {R}obin eigenvalues in
  the {D}irichlet limit,  \emph{Comm. Partial Differential Equations}
  \textbf{50} no.~9 (2025), 1174--1210. \mr{4951693}.
  \doi{10.1080/03605302.2025.2536098}.

\bibitem[Oza83]{ozawa1983}
\bgroup\scshape{}S.~Ozawa\egroup{}, Spectra of domains with small spherical
  {N}eumann boundary,  \emph{J. Fac. Sci. Univ. Tokyo Sect. IA Math.}
  \textbf{30} no.~2 (1983), 259--277. \mr{722497}.

\bibitem[RT75]{Rauch1975}
\bgroup\scshape{}J.~Rauch\egroup{} and \bgroup\scshape{}M.~Taylor\egroup{},
  Potential and scattering theory on wildly perturbed domains,  \emph{J.
  Functional Analysis} \textbf{18} (1975), 27--59. \mr{377303}.
  \doi{10.1016/0022-1236(75)90028-2}.

\bibitem[SW99]{SW99}
\bgroup\scshape{}S.~A. Sauter\egroup{} and \bgroup\scshape{}R.~Warnke\egroup{},
  Extension operators and approximation on domains containing small geometric
  details,  \emph{East-West J. Numer. Math.} \textbf{7} no.~1 (1999), 61--77.
  \mr{1683936}.

\bibitem[Tey99]{teytel}
\bgroup\scshape{}M.~Teytel\egroup{}, How rare are multiple eigenvalues?,
  \emph{Comm. Pure Appl. Math.} \textbf{52} no.~8 (1999), 917--934.
  \mr{1686977}.
  \doi{10.1002/(SICI)1097-0312(199908)52:8<917::AID-CPA1>3.3.CO;2-J}.

\bibitem[Uhl72]{Uhlenbeck1}
\bgroup\scshape{}K.~Uhlenbeck\egroup{}, Eigenfunctions of {L}aplace operators,
  \emph{Bull. Amer. Math. Soc.} \textbf{78} (1972), 1073--1076. \mr{319226}.
  \doi{10.1090/S0002-9904-1972-13117-3}.

\bibitem[Uhl76]{Uhlenbeck2}
\bgroup\scshape{}K.~Uhlenbeck\egroup{}, Generic properties of eigenfunctions,
  \emph{Amer. J. Math.} \textbf{98} no.~4 (1976), 1059--1078. \mr{464332}.
  \doi{10.2307/2374041}.

\bibitem[CdV86]{ColindeV1986}
\bgroup\scshape{}Y.~Colin~de Verdi\`ere\egroup{}, Sur la multiplicit\'{e} de la
  premi\`ere valeur propre non nulle du laplacien,  \emph{Comment. Math. Helv.}
  \textbf{61} no.~2 (1986), 254--270. \mr{856089}.  \doi{10.1007/BF02621914}.

\bibitem[Win10]{Winkert2010}
\bgroup\scshape{}P.~Winkert\egroup{}, {$L^\infty$}-estimates for nonlinear
  elliptic {N}eumann boundary value problems,  \emph{NoDEA Nonlinear
  Differential Equations Appl.} \textbf{17} no.~3 (2010), 289--302.
  \mr{2652229}.  \doi{10.1007/s00030-009-0054-5}.

\end{thebibliography}
	
\end{document}